%% file: main_updated.tex
\documentclass[11pt]{article}

\usepackage{dashrule,arydshln,empheq,amsfonts,graphicx,amsmath,amssymb,amsthm,geometry,bbm,hyperref,nicefrac,enumerate,comment,fontawesome,mathtools,cases,pict2e,picture,mathrsfs,mathabx,setspace,mathrsfs,cancel,enumitem}
\usepackage{stmaryrd}
\usepackage{float}            
\usepackage{algorithm}
\usepackage{algpseudocode}
\usepackage{booktabs}
\usepackage{siunitx}
\usepackage{mleftright}
\usepackage{accents}
\usepackage{hyperref}

\usepackage{subcaption}

\usepackage{color, colortbl}
\definecolor{Gray}{gray}{0.9}

\usepackage{pgfplots}
\usepgfplotslibrary{groupplots,dateplot}
\usetikzlibrary{patterns,shapes.arrows, calc}
\pgfplotsset{compat=newest}

\usepackage{stmaryrd}
\SetSymbolFont{stmry}{bold}{U}{stmry}{m}{n}

\newcommand{\valueFunctionPlayeri}{V^i}
\newcommand{\valueFunctionPlayerj}{V^j}

\newcommand{\Tr}{\mathrm{Tr}}
\newcommand{\gradient}{\mathrm{D}_{\x}}
\newcommand{\hessian}{\gradient^2}
\newcommand{\sumAllPlayersj}{\sum_{j\in \SetOfPlayerIndices}}

\newcommand{\ThetaOne}{\Theta_1^{j}}
\newcommand{\ThetaTwo}{\Theta_2^{j}}
\newcommand{\ThetaThree}{\Theta_3^{i,j}}
\newcommand{\ThetaThreeTilde}{\tilde{\Theta}_3^{i,j}}
\newcommand{\ThetaFour}{\Theta_4^{i,j}}
\newcommand{\ThetaFive}{\Theta_5^{i,j}}
\newcommand{\ThetaSix}{\Theta_6^{i,j}}
\newcommand{\ThetaSeven}{\Theta_7^{i,j}}
\newcommand{\ThetaEight}{\Theta_8^{i,j}}

\newcommand{\ThetaFourFiveSeven}{\Theta_{4,5,7}^{i,j}}
\newcommand{\ThetaSixEight}{\Theta_{6,8}^{i,j}}

\newcommand{\dataText}{\mathrm{data}}
\newcommand{\FP}{\scalebox{0.55}{$\mathrm{FP}$}}
\newcommand{\Bsup}{\scalebox{0.55}{$\mathrm{B}$}}
\newcommand{\Asup}{\scalebox{0.55}{$\mathrm{A}$}}
\newcommand{\inttT}{\int_{t}^{T}}

\newcommand{\intot}{\int_{0}^{t}}
\newcommand{\ds}{\, \mathrm{d}s}
\newcommand{\dWs}{\, \mathrm{d}W_s}
\newcommand{\dt}{\, \mathrm{d}t}

\newcommand{\diag}{\mathrm{diag}}
\newcommand{\LQGA}{\mathbf{A}}
\newcommand{\LQGB}{\mathbf{B}}
\newcommand{\LQGC}{\mathbf{C}}
\newcommand{\LQGAi}{A^i}
\newcommand{\LQGBi}{B^i}
\newcommand{\LQGBhat}{\hat{B}}
\newcommand{\LQGBhati}{\LQGBhat^i}

\newcommand{\LQGCi}{C^i}
\newcommand{\LQGSigma}{\mathbf{\Sigma}}
\newcommand{\LQGSigmaOnePlayer}{\Sigma}
\newcommand{\LQGSigmai}{\Sigma^i}
\newcommand{\LQGRxi}{R_{\StateVariable}^i}
\newcommand{\LQGrx}{r_{\StateVariable}}
\newcommand{\LQGRui}{R^i_{\Control}}

\newcommand{\LQGGi}{G^i}
\newcommand{\LQGgx}{g_{\StateVariable}}
\newcommand{\diffusionMatrix}{\CC}
\newcommand{\attractionMatrix}{F}
\newcommand{\controlMatrixPlayeri}{B^i}
\newcommand{\controlMatrixPlayerj}{B^j}
\newcommand{\costMatrix}{K}
\newcommand{\stateCostPlayeri}{\costMatrix^{i}_{\x}}
\newcommand{\controlCostPlayerii}{\costMatrix^{i,i}_{\Control}}
\newcommand{\controlCostPlayerij}{\costMatrix^{i,j}_{\Control}}
\newcommand{\controlCostPlayerjj}{\costMatrix^{j,j}_{\Control}}
\newcommand{\controlstateCostPlayerii}{\costMatrix^{i,i}_{\Control\x}}
\newcommand{\controlstateCostPlayerij}{\costMatrix^{i,j}_{\Control\x}}
\newcommand{\controlstateCostPlayerjj}{\costMatrix^{j,j}_{\Control\x}}
\newcommand{\Phii}{\tilde{\Phi}^{i}}
\newcommand{\Phij}{\tilde{\Phi}^{j}}
\newcommand{\PPshort}{\tilde{P}}
\newcommand{\transpose}{\top}
\newcommand{\bbR}{\mathbb{R}}
\newcommand{\spd}{\mathbb{S}_+}
\newcommand{\StateVariable}{\mathbf{x}}
\newcommand{\AllStates}{\mathbf{X}^{\AllControls}}

\newcommand{\AllStatesInitialValue}{\mathbf{X}_0}
\newcommand{\OneStateInitialValue}{X_0}

\newcommand{\FinalTime}{T}
\newcommand{\TimeStep}{h}
\newcommand{\Control}{a}
\newcommand{\ControlPlayeri}{\Control^i}

\newcommand{\OptimalControlPlayeri}{\Control^{i,*}}

\newcommand{\AllControls}{\mathbf{u}}

\newcommand{\SpaceOfAdmissibleControls}{\mathcal{U}}
\newcommand{\ControlMappingSelf}{b_+}
\newcommand{\ControlMappingOther}{b_-}
\newcommand{\CostFunctionPlayeri}{J_{\ControlPlayeri}}
\newcommand{\StateDimension}{d}
\newcommand{\NumPlayers}{N}
\newcommand{\ControlDimension}{\ell}
\newcommand{\Mdata}{M_{\dataText}}
\newcommand{\MdataYzero}{M_{\dataText, y_0}}
\newcommand{\MdataZ}{M_{\dataText, Z}}
\newcommand{\SetOfPlayerIndices}{\mathcal{I}}
\newcommand{\LFP}{L_\zeta^{\FP}}
\newcommand{\MFP}{M^{\FP}}

\DeclareMathOperator*{\argmin}{arg\,min}

\newcommand{\tnorm}[1]{\vert\!\vert\!\vert #1 \vert\!\vert\!\vert}
\newcommand{\btnorm}[1]{\big|\!\big|\!\big| #1 \big|\!\big|\!\big|}

\newcommand{\cL}{\mathcal{L}}
\newcommand{\bcL}{\boldsymbol{\mathcal{L}}}

\newcommand{\X}{\boldsymbol{X}}
\newcommand{\Z}{\boldsymbol{Z}}
\newcommand{\x}{\boldsymbol{x}}
\newcommand{\z}{\boldsymbol{z}}
\newcommand{\bal}{\boldsymbol{\alpha}}
\newcommand{\bbeta}{\boldsymbol{\beta}}

\newcommand{\bmu}{\boldsymbol{\mu}}
\newcommand{\bzeta}{\boldsymbol{\zeta}}
\newcommand{\bphi}{\boldsymbol{\phi}}
\newcommand{\bPhi}{\boldsymbol{\Phi}}

\newcommand{\ba}{\boldsymbol{a}}
\newcommand{\bb}{\boldsymbol{b}}
\newcommand{\bc}{\boldsymbol{c}}
\newcommand{\p}{\boldsymbol{p}}

\newcommand{\f}{\boldsymbol{f}}
\newcommand{\g}{\boldsymbol{g}}
\newcommand{\Y}{\boldsymbol{Y}}
\newcommand{\bH}{\boldsymbol{H}}

\newcommand{\cD}{\mathcal{D}}

\newcommand{\bV}{\boldsymbol{V}}

\newcommand{\Xal}{\boldsymbol{X}^{\boldsymbol{\alpha}}}
\newcommand{\I}{\mathcal{I}}
\newcommand{\R}{\mathbb{R}}
\newcommand{\N}{\mathbb{N}}
\newcommand{\E}{\mathbb{E}}

\renewcommand{\P}{\mathbb{P}}

\newcommand{\A}{\mathbb{A}}

\newcommand{\F}{\mathcal{F}}

\newcommand{\e}{\text{e}}
\renewcommand{\d}{\text{d}}
\newcommand{\D}{\mathcal{D}}
\newcommand{\B}{\mathcal{B}}

\newcommand{\CC}{\mathbf{C}}
\newcommand{\V}{\mathcal{V}}
\newcommand{\W}{\boldsymbol{W}}
\newcommand{\bSigma}{\boldsymbol{\Sigma}}
\usepackage[textsize=tiny]{todonotes}   

\usepackage{array}
\newcolumntype{P}[1]{>{\centering\arraybackslash}p{#1}}
\newcommand{\triple}{{\vert\kern-0.25ex\vert\kern-0.25ex\vert}}
\usepackage{mathtools}
\newcommand{\defeq}{\vcentcolon=}

\usepackage{geometry}
\usepackage{xcolor}
\newcommand{\kristoffer}[1]{\textcolor{green!50!black}{#1}}
\newcommand{\per}[1]{\textcolor{magenta!50!black}{#1}}
\newcommand{\adam}[1]{\textcolor{blue}{#1}}

\newcommand{\bekir}[1]{\textcolor{orange!80!black}{#1}}
\newcommand{\assumpt}[1]{\textcolor{magenta}{#1}}
\newcommand{\conseq}[1]{\textcolor{cyan!80!black}{#1}}

\newtheorem{lemma}{Lemma}[section]
\newtheorem{remark}{Remark}[section]

\newtheorem{assumption}{Assumption}

\newtheorem{theorem}{Theorem}[section]

\newtheorem{definition}[lemma]{Definition}
\newtheorem{proposition}{Proposition}[section]
\newtheorem{example}{Example}[section]

\theoremstyle{definition}

\newcommand{\footremember}[2]{%
    \footnote{#2}
    \newcounter{#1}
    \setcounter{#1}{\value{footnote}}%
}
\newcommand{\footrecall}[1]{%
    \footnotemark[\value{#1}]%
} 

\begin{document}
\title{Convergence of fictitious play for fully coupled FBSDEs in finite-player stochastic differential games}
\author{%
  Adam Andersson%
   \footremember{affB}{Saab AB, Gothenburg, Sweden} \footremember{affA}{Department of Mathematical Sciences, Chalmers University of Technology \& University of Gothenburg}
     \and
  Kristoffer Andersson%
    \footremember{affK}{Department of Economics, University of Verona. Email: \texttt{kristofferherbert.andersson@univr.it}}%
  \and
  Per Ljung
  \footrecall{affB}
  \footremember{affC}{Department of Electrical Engineering, Chalmers University of Technology}
  }

\maketitle

\begin{abstract}
In this article we investigate the theoretical convergence properties of the fictitious-play approximation procedure applied to coupled FBSDE systems for finite-player non-zero-sum stochastic differential games. Under one set of assumptions, the convergence is shown to be geometric. Under an additional structural assumption, the geometric convergence rate further improves to a super-exponential rate in a special class of games. To the best of our knowledge, this provides the first convergence analysis of fictitious play for fully coupled FBSDEs. A numerical experiment with a linear-quadratic interbank borrowing and lending problem confirms the geometric convergence.

\end{abstract}

\section{Introduction}

Stochastic differential games (SDG) provide a mathematical framework for modeling interacting agents or players whose actions or controls influence the evolution of a common stochastic system. In such problems, each player optimizes an individual objective while accounting for the strategies of the other players, leading to the famous Nash equilibrium solution concept. Non-zero-sum stochastic differential games arise naturally in multi-agent systems under uncertainty, where each agent optimizes an individual objective, with applications ranging from financial markets and energy systems to economics and engineering. Representative examples include interbank lending and systemic risk models~\cite{fouque2015mean}, 
applications in economics such as dynamic oligopoly and resource management problems in environmental and engineering systems~\cite{dockner2000differential}, 
multi-agent optimal execution problems with price impact~\cite{schied2017state}, 
pursuit-evasion~\cite{berkovitz1986differential, isaacs1965differential,ramachandran2012stochastic,yavin1986stochastic}
and missile guidance and control~\cite{anderson1981comparison,faruqi2017differential,pontani2008optimal,weintraub2020introduction,yavin1990application}.

Similarly to stochastic control, stochastic differential games are either solved with the stochastic maximum principle~\cite{bui2019approximation,elliott1975stochastic,ramachandran2012stochastic}, or by dynamic programming~\cite{bui2019approximation,carmona2018probabilistic,faruqi2017differential,friedman1972stochastic,kushner2002numerical,kushner2007numerical,pontani2008optimal,ramachandran2012stochastic,schied2017state, yavin1990application,yavin1986stochastic}. In this work, we focus on the latter approach. Approaching SDG via dynamic programming leads to a coupled system of Hamilton--Jacobi--Bellman (HJB) equations, the Nash PDE system, and to a coupled system of forward-backward stochastic differential equations (FBSDEs), the Nash FBSDE system. In regular settings, these two representations are equivalent, but the Nash FBSDE system is a more general representation for regimes with lower regularity. 

Only games with highly specific structure, such as the important and well-studied class of linear--quadratic games, admit analytical or semi-analytical solutions. Therefore, for many applications, numerical approximations are necessary. However, accurate, scalable, and practical numerical approximation of SDG remains an open problem. Finite difference methods have been studied for two-player zero-sum games~\cite{falcone2006numerical} and for mean-field games~\cite{achdou2013mean,achdou2010mean}. Grid-based finite difference and finite element methods for PDEs, however, suffer from the curse of dimensionality (COD) and are therefore limited to low-dimensional settings. To the best of our knowledge, direct approximation of the Nash PDE system for non-zero-sum stochastic differential games has not been studied in the literature, and would in any case be of limited interest because of the aforementioned scaling problem. A Markov chain approximation framework for two-player zero-sum games was introduced in~\cite{kushner2002numerical}, and extended to finite-player non-zero-sum stochastic differential games in~\cite{kushner2007numerical}. It was later extended to more general dynamics, including regime switching and jump diffusions~\cite{bui2019approximation}. However, this approach is also grid-based and thus suffers from the COD.


Han~\cite{hu2019deep}, Han and Hu~\cite{han2020deep}, and Han, Hu, and Long~\cite{han2022convergence} proposed so-called deep fictitious play algorithms as a general numerical scheme for the Nash FBSDE system. It is based on the iterative fictitious-play procedure introduced in~\cite{brown1951iterative}. It is a Picard-type iteration in which, at each step, every player solves a single-player stochastic control problem with the other players' control policies fixed. Approximating the resulting single-player fictitious-play FBSDEs using the deep BSDE method~\cite{han2018solving} leads to deep fictitious-play. In~\cite{han2022convergence}, Han, Hu, and Long proved the geometric convergence of fictitious play when the forward process of the FBSDE is decoupled from the solution components of the backward equation. This decoupled setting is relevant because it is possible, by means of a transformation, to remove the coupling in the forward process, after an appropriate modification of the backward equation. However, when the original deep BSDE method is applied directly to coupled FBSDEs from stochastic control, as shown empirically in~\cite{andersson2023convergence, andersson2026deep}, it fails to converge except for rather obscure parameter choices. Based on the authors' experience in the work related to~\cite{andersson2023convergence, andersson2026deep}, the aforementioned decoupling does not appear to remedy the convergence problem. For this reason, it is desirable to work with fully coupled FBSDEs and instead use one of the robust deep FBSDE methods in~\cite{andersson2023convergence, andersson2026deep}, which are practically applicable to a larger class of problems.

In the present work, we extend the analysis in~\cite{han2022convergence} by showing geometric convergence of fictitious play in the fully coupled setting. Under an alternative set of assumptions, we show that the convergence is super-exponential for a special class of games. We consider both stopped games on bounded domains under additional smoothness assumptions and unrestricted games under either a gradient bound assumption or a typical smallness condition on the coefficients or the terminal time. The former case is closely related to BSDEs on domains, for which deep learning-based approximation methods with error guarantees have recently been developed~\cite{andersson2025multi}. The geometric rate is verified experimentally for a linear--quadratic interbank borrowing and lending game with 2 and 20 players.

The paper is organized as follows. Section~\ref{sec:problem_formulation} introduces the game-theoretic setting and presents the Nash PDE and Nash FBSDE formulations. To make the manuscript more self-contained, accessible, and useful to a wider range of readers, this section is intentionally comprehensive and concludes with examples of games. Section~\ref{sec:FP} presents the fictitious-play approximation, the detailed setting, and convergence analysis including our main result. In Section~\ref{sec:NE}, we present our numerical experiment.

\section{General setting and preliminaries on SDG}\label{sec:problem_formulation}

To make the manuscript more self-contained, accessible, and useful to a broader range of readers, we introduce stochastic differential games from both the PDE and FBSDE perspectives. We present the assumptions, give a high-level overview of the synthesis through dynamic programming and end with concrete examples which demonstrate our abstract setting.

\subsection{Spaces of stochastic processes and preliminaries}\label{sec:notation_spaces}
Throughout this paper, let \(T\in(0,\infty)\), \(n\in\N\), and let $(\W_t)_{t\in[0,T]}$ be an $n$-dimensional standard Brownian motion on a filtered probability space \((\Omega,\F,(\F_t)_{t\in[0,T]},\P)\). For $\ell\geq1$, denote by $\|\cdot\|_{\R^\ell}$ and $\langle\cdot,\cdot\rangle_{\R^\ell}$ the norm and scalar product on $\R^\ell$, and by $\tnorm{\cdot}_{\R^{\ell \times n}}$ the Frobenius norm on $\R^{\ell\times n}$. Furthermore, for $p\in[1,\infty)$, let 
$\mathbb L^{p,\ell}$ be the space of random variables \(U\colon\Omega\to\R^\ell\) such that
\begin{align*}
\|U\|_{\mathbb L^{p,\ell}} \defeq \Big(\E\Big[\|U\|_{\mathbb R^\ell}^p\Big]\Big)^\frac1p<\infty.
\end{align*}
For $t \in (0,T]$, $p \in [1, \infty)$, and $\beta \in [0,\infty)$, let $\mathbb S^{p,\ell}_{\beta,t}$, $\mathbb H^{p,\ell}_{\beta,t}$, and $\mathbb H_{\beta,t}^{p,\ell,n}$ denote the Banach spaces of predictable processes $\mathcal Y,\mathcal Z\colon\Omega\times[0,t]\to\R^\ell$, and $\mathcal W\colon\Omega\times[0,t]\to\R^{\ell\times n}$, that satisfy
\begin{align*}
    \|\mathcal{Y}\|_{\mathbb S^{p,\ell}_{\beta,t}}
    &\defeq\bigg(\E\bigg[\sup_{s\in[0,t]}\e^{\beta s}\|\mathcal{Y}_s\|_{\mathbb R^{\ell}}^p\bigg]\bigg)^{\frac1p}<\infty,\\
    \|\mathcal{Z}\|_{\mathbb H^{p,\ell}_{\beta,t}}
    &\defeq
    \bigg(\E\bigg[\bigg(\int_0^t\e^{\beta s}\|\mathcal{Z}_s\|_{\mathbb R^{\ell}}^2\,\d s\bigg)^{\frac{p}2}\bigg]\bigg)^\frac1p
    <\infty,\\
  \|\mathcal W\|_{\mathbb H^{p,\ell,n}_{\beta,t}}
  &\defeq
  \bigg(\E\bigg[\bigg(\int_0^t\e^{\beta s}\tnorm{\mathcal W_s}_{\R^{\ell\times n}}^2\,\d s\bigg)^{\frac{p}2}\bigg]\bigg)^\frac1p
<\infty.
\end{align*} 
In the case $\beta=0$, we suppress $\beta$ from the notation. It follows immediately from the definitions that, for $\mathcal Y \in \mathbb S^{2,\ell}_{t}$, $\mathcal Z \in \mathbb H^{2,\ell}_{t}$, we have the norm equivalences
\begin{equation}\label{eq:HHH}
\begin{split}
  \|\mathcal Y\|^2_{\mathbb S^{2,\ell}_{t}} 
  &\le 
  \|\mathcal Y\|^2_{\mathbb S^{2,\ell}_{\beta,t}} 
  \le 
  e^{\beta t}\|\mathcal Y\|^2_{\mathbb S^{2,\ell}_{t}}, \\
  \|\mathcal Z\|^2_{\mathbb H^{2,\ell}_{t}} 
  &\le 
  \|\mathcal Z\|^2_{\mathbb H^{2,\ell}_{\beta,t}} 
  \le 
  e^{\beta t}\|\mathcal Z\|^2_{\mathbb H^{2,\ell}_{t}}.
\end{split}
\end{equation}
Moreover, if $\mathcal Y \in \mathbb S^{2,1}_t$ and $\mathcal Z \in \mathbb H^{2,n}_t$, then $\mathcal Y \in \mathbb H^{2,1}_t$ and $\mathcal Y \mathcal Z \in \mathbb H^{1,n}_t$, with
\begin{align} \label{eq:HS2}
  \|\mathcal Y\|_{\mathbb{H}^{2,1}_{\beta,t}}^2
  &\leq t \|\mathcal Y\|_{\mathbb{S}^{2,1}_{\beta,t}}^2,\\
 \label{eq:HS3}
  \|\mathcal Y \mathcal Z\|_{\mathbb{H}^{1,n}_{\beta,T}}
  &\leq  
  \|\mathcal Y\|_{\mathbb{S}^{2,1}_{\beta,T}}
  \|\mathcal Z\|_{\mathbb{H}^{2,n}_{\beta,T}}.
\end{align}
For $p=2$, the spaces $\mathbb H^{2,\ell}_{\beta, t}$, $\beta \geq 0$, are Hilbert spaces with equivalent scalar products 
\begin{align*}
    \big\langle \mathcal{Z}^1,\mathcal{Z}^2\big\rangle_{\mathbb H^{2,\ell}_{\beta,t}}\defeq
    \E\bigg[\int_0^t\e^{\beta s}
    \big\langle \mathcal{Z}_s^1,\mathcal{Z}_s^2\big\rangle_{\mathbb R^{\ell}}\,\d s\bigg]
    ,\quad \mathcal{Z}^1, \mathcal{Z}^2\in\mathbb{H}_{\beta, t}^{2,\ell}.
\end{align*}
Moreover, for $\mathcal{Z}^1, \mathcal{Z}^2 \in \mathbb H^{2,\ell}_T$, a straightforward application of the Cauchy--Schwarz inequality yields
\begin{align}
    \bigg\| \int_{\cdot}^T \big\langle \mathcal Z^1_s, \mathcal Z^2_s \big\rangle_{\mathbb R^\ell} \ds \bigg\|_{\mathbb S^{1,1}_T} \leq \big\| \mathcal Z^1 \big\|_{\mathbb H^{2,\ell}_T} \big\| \mathcal Z^2 \big\|_{\mathbb H^{2,\ell}_T}. \label{eq:integral_in_S11_to_Hnorms}
\end{align}
The Burkholder--Davis--Gundy inequality (see~\cite[Theorem~3.3.28]{KarS91}) states that for $p\in\{1,2\}$, there exist constants $k_p,K_p\in(0,\infty)$ such that, for all $\phi\in \mathbb H^{p,\ell,n}_{t}$, we have
\begin{align}\label{eq:BDG1}
k_p\|\phi\|_{\mathbb{H}_{t}^{p,\ell,n}}
\le
\bigg\| \int_0^\cdot \phi_s\,\d\W_s\bigg\|_{\mathbb{S}_{t}^{p,\ell}}
\le
K_p\|\phi\|_{\mathbb{H}_{t}^{p,\ell,n}}.
\end{align}
Moreover, if $\phi\in \mathbb H^{1,\ell,n}_{t}$, then
\begin{align}\label{eq:BDG2}
\E\!\left[\int_0^t \phi_s\,\d\W_s\right]=0.
\end{align}

We next introduce notation and basic properties of stopped (or killed) stochastic processes. Let $\sigma_1,\sigma_2,\sigma_3,\sigma_4$ be $(\mathcal F_t)$-stopping times. For a predictable process $f\colon\Omega\times[0,T]\to\R^\ell$, define
\begin{align*}
   f^{[\sigma_1,\sigma_2]} \defeq f\,\mathbf 1_{[\sigma_1,\sigma_2]}, \qquad
   f^{[\sigma_1]} \defeq f^{[0,\sigma_1]}.
\end{align*}
For $\Phi\in \mathbb H^{2,\ell}_{\beta, T}$, $\Psi\in \mathbb S^{2,\ell}_{\beta, T}$, if $\sigma_1 < \sigma_2 \leq \sigma_3 < \sigma_4$, it holds that
\begin{align}\label{eq:H2_DS}
\Big\|
  \Phi^{[\sigma_1,\sigma_2]}
  +
  \Phi^{[\sigma_3,\sigma_4]}
\Big\|_{\mathbb H^{2,\ell}_{\beta,T}}^2
&=
\Big\|
  \Phi^{[\sigma_1,\sigma_2]}
\Big\|_{\mathbb H^{2,\ell}_{\beta,T}}^2
+
\Big\|
  \Phi^{[\sigma_3,\sigma_4]}
\Big\|_{\mathbb H^{2,\ell}_{\beta,T}}^2,\\
\Big\|\Psi^{[\sigma_1,\sigma_2]}+\Psi^{[\sigma_3,\sigma_4]}\Big\|_{\mathbb S^{2,\ell}_{\beta,T}}^2
&
\le \Big\|\Psi^{[\sigma_1,\sigma_2]}\Big\|_{\mathbb S^{2,\ell}_{\beta,T}}^2
+\Big\|\Psi^{[\sigma_3,\sigma_4]}\Big\|_{\mathbb S^{2,\ell}_{\beta,T}}^2. \label{eq:S_norm_split}
\end{align}

\subsection{The stochastic differential game}
\label{sec:setting2}

Throughout this paper, let $d, n, N \in \N$. Furthermore, let $\D\subseteq\R^n$ be either $\D=\R^n$ or an open, bounded spatial domain with boundary $\partial \D$, and define the space-time domain $Q_T^0=[0,T)\times \D$, its closure $Q_T=[0,T]\times \overline\D$, and its boundary $\partial Q_T=[0,T)\times \partial \D \cup \{T\}\times \overline \D$. We consider an $N$-player game and denote by \(\mathcal{I}\defeq\{1,2,\ldots,N\}\) the set of all players. The strategy of each player $i \in \I$ is given by a Markov control policy $\alpha^i \colon Q_T \to A^i\subset \R^{d}$, where \(A^i\subset \R^d\) is the compact control set for player~$i$. The policy is not given a priori, but is to be optimized. For $i\in\I$, let $\A^i$ denote the set of all $\alpha^i$ for which there exists $C=C(\alpha^i) \geq0$ such that, for all $(t_1,\x_1),(t_2,\x_2)\in Q_T$, we have
\begin{align*}
    \big\|\alpha^i(t_1,\x_1)-\alpha^i(t_2,\x_2)\big\|_{\R^{d}}^2
    &\leq C
    \big(
      |t_1-t_2|
      +
      \|\x_1-\x_2\|_{\R^n}^2
    \big).
\end{align*}
We denote by $A \defeq A^1 \times \cdots \times A^N \subset \R^{d\times N}$ the set of all possible joint actions. Similarly, the set $\A$ consists of all joint Markov policies \(\bal = (\alpha^1,\dots,\alpha^N) \colon Q_T \to A\), where $\alpha^i \in \A^i$ for each $i \in \I$. Let \(\bar{\bb}\colon Q_T\times A\to\R^n\) and \(\bSigma\colon Q_T\to\R^{n\times n}\) be the drift and diffusion coefficients of the common state process. We assume that there exist $L_{\bar{\bb}},L_{\bSigma}\geq0$ such that for all $(t_1,\x_1,\ba_1),(t_2,\x_2,\ba_2)\in Q_T\times A$ with $\ba_j=(a_j^1,\dots,a_j^N)$, $j=1,2$, we have
\begin{align}\label{eq:bhat}
    \big\|
      \bar \bb(t_1,\x_1,\ba_1)-\bar \bb(t_2,\x_2,\ba_2)
    \big\|_{\R^{n}}^2
    &\leq
    L_{\bar{\bb}}
    \big(
      |t_1-t_2|
      +
      \|\x_1-\x_2\|_{\R^n}^2
      +
      \btnorm{\ba_1-\ba_2}_{\R^{d\times N}}^2
    \big),\\
    \btnorm{\bSigma(t_1,\x_1)- \bSigma(t_2,\x_2)}_{\R^{n\times n}}^2
    &\leq
    L_{\bSigma}
    \big(
      |t_1-t_2|
      +
      \|\x_1-\x_2\|_{\R^n}^2
    \big). \label{eq:lipschitz_sigma}
\end{align}
We assume that $0\in A^i$ for all $i\in\I$ so that the trivial control $0\in\mathbb A$ is admissible. This implies that $\bar \bb$ satisfies a linear growth condition in $\x$, uniformly over $\ba\in A$, which we do not state explicitly. In Section~\ref{sec:FBSDE_Nash_system}, after a change of variables, we introduce the related coefficient $\bb$. We also assume uniform parabolicity, i.e., there exist constants $0 < c < C$ such that, for all $(t,\x)\in Q_T$ and $\boldsymbol{\xi}\in\R^n$, we have
\begin{align}\label{eq:UP}
 c\|\boldsymbol{\xi}\|_{\R^n}^2
 \leq
\boldsymbol{\xi}^\top\bSigma(t,\x)
\bSigma^\top(t,\x)\boldsymbol{\xi}
    \leq C\|\boldsymbol{\xi}\|_{\R^n}^2.
\end{align}
Uniform parabolicity implies that $\bSigma(t,\x)$ is invertible for every $(t,\x)\in Q_T$, with inverse uniformly bounded on $Q_T$. Together with~\eqref{eq:lipschitz_sigma}, this also implies that
\begin{align}\label{bPhi}
    \bPhi(t,\x) \defeq \big(\bSigma^\top(t,\x)\big)^{-1}
\end{align}
is uniformly bounded and Lipschitz continuous on $Q_T$. Consequently, there exist constants $C_{\bPhi}, L_{\bPhi} \geq 0$ such that, for all $(t_1,\x_1),(t_2,\x_2)\in Q_T$, we have
\begin{align}\label{eq:Lip_Phi}
    \tnorm{\bPhi(t_1,\x_1)}_{\R^{n\times n}}\leq C_{\bPhi},
    \quad
    \textrm{and}
    \quad
    \tnorm{
      \bPhi(t_1,\x_1)-\bPhi(t_2,\x_2)
    }^2_{\R^{n\times n}}
    \leq
    L_{\bPhi}
    \big(
      |t_1-t_2|+\|\x_1-\x_2\|_{\R^n}^2
    \big).
\end{align}
The state dynamics are determined by the predictable, square-integrable and up to modification unique stochastic processes $\X^{t,\x,\bal}\colon[t,T]\times\Omega\to\overline\D$, which for all $s\in[t,T]$, $\P$-almost surely satisfy
\begin{align}\label{eq:state_alpha_process}
    \X_s^{t,\x,\bal} 
    = 
    \x
    +
    \int_t^{s\wedge\tau}
    \bar{\bb}\big(r,\X_r^{t,\x,\bal},\bal\big(r,\X_r^{t,\x,\bal}\big)\big)\,\d r 
    + 
    \int_t^{s\wedge\tau} \bSigma\big(r,\X_r^{t,\x,\bal}\big)\,\d \W_r.
\end{align}
Here, $\tau=\tau(t,\x,\bal)\colon\Omega \to[t,T]$ denotes the exit time of $Q_T^0$, i.e., the first time at which $\X^{t,\x,\bal}$ reaches either $\partial \cD$ or the terminal boundary $\{T\}\times\overline\D$. To simplify notation,  we suppress the $(t,\x,\bal)$-dependence of $\tau$. Under our assumptions, these processes are well defined by standard SDE theory.

For \(i \in \I\), let \(\bar{f}^i \colon Q_T \times A \to \R\) and \(g^i \colon \partial Q_T \to \R\) denote player \(i\)'s running and terminal cost functions, respectively. The objective of player \(i\) is to minimize its individual cost functional, which depends both on its own control and on the actions of the remaining $N - 1$ players. These cost functionals $J^i\colon Q_T\times \A\to \R$, $i\in\I$, are given by
\begin{equation}\label{eq:cost_alpha}
\begin{split}
&J^i(t, \x, \bal) \defeq \E \bigg[ \int_t^\tau \bar{f}^i\big(s, \X_s^{t,\x,\bal}, \bal\big(s, \X_s^{t,\x,\bal}\big)\big) \, \d s + g^i\big(\tau,\X_\tau^{t,\x,\bal}\big) \bigg].
\end{split}
\end{equation}
Under our assumptions, the processes $\X^{t,\x,\bal}$ have finite moments of all orders. Thus, polynomial bounds on $\bar f^{1},\dots,\bar f^{N}$ and $g^{1},\dots,g^{N}$ are sufficient to ensure that $J^{1},\dots,J^{N}$ are well defined. However, much stronger assumptions are required for the solution theory; see, e.g., Examples~\ref{ex:1}--\ref{ex:LQ} below. For the convergence analysis in Section~\ref{sec:conv_FP}, we impose the following Lipschitz continuity. For each $i \in \I$, there exist $L_{\bar{f}^i}, L_{g^i} \geq 0$ such that for all $(t_1, \x_1, \boldsymbol{a}_1), (t_2, \x_2, \boldsymbol{a}_2) \in Q_T \times A$, 
\begin{align*}
    \big|
        \bar{f}^i(t_1, \x_1, \boldsymbol{a}_1) 
        - 
        \bar{f}^i(t_2, \x_2, \boldsymbol{a}_2)
    \big|^2
    &\leq 
    L_{\bar{f}^i}
    \big(
        |t_1 - t_2|
        +
        \| \x_1 - \x_2 \|^2_{\R^n}
        +
        \tnorm{\ba_1 - \ba_2 }^2_{\R^{d\times N}}
    \big),
\end{align*}
and, for all $(t_1, \x_1), (t_2, \x_2) \in \partial Q_T$,
\begin{align*}
    \big|
        g^i(t_1, \x_1) 
        - 
        g^i(t_2, \x_2)
    \big|^2
    &\leq 
    L_{g^i}
    \big(
        |t_1 - t_2|
        +
        \| \x_1 - \x_2 \|^2_{\R^n}
    \big).
\end{align*}
For $\ba\in A$ and $i\in\I$, let $A^{-i}$ denote the set of actions
\begin{align*}
    \ba^{-i} \defeq \big(a^1,\dots,a^{i-1},a^{i+1},\dots,a^N\big),
\end{align*}
and use the analogous notation $\bal^{-i}\in \A^{-i}$ for joint policies. For $i \in \I$, define $[\cdot,\cdot]_i\colon A^i\times A^{-i}\to A$ by
\begin{align*}
    [a, \ba^{-i}]_i=(a^1,\dots,a^{i-1},a,a^{i+1},a^N), \quad a \in A^i,\ \ba^{-i} \in A^{-i}.
\end{align*}
From an optimization perspective, it is not sufficient for a player \(i \in \I\) to find an optimal strategy \(\beta^{i}\) against an arbitrary joint Markov policy \(\bal^{-i}\) of the other \(N-1\) players, because those players are optimizing their strategies as well. For a solution to be truly optimal, each player \(i \in \I\) must have a strategy \(\beta^{i}=\alpha^{i,*}\) that is optimal given the strategies \(\bal^{-i,*}=(\alpha^{1,*},\dots,\alpha^{i-1,*},\alpha^{i+1,*},\alpha^{N,*})\) of the others. Otherwise, some player would have an incentive to change strategy, which contradicts optimality. Moreover, the cost functional of player \(i\) depends on the strategies of the other players directly, and indirectly through the controlled state process. Hence, a change in the strategy of any other player may cause \(\alpha^{i,*}\) to lose its optimality. Thus, optimal strategies are generally not stable under unilateral changes unless they are jointly optimized. This leads to the notion of a Nash equilibrium: no player can benefit from unilaterally deviating from their chosen strategy. Formally, an \(N\)-player closed-loop Nash equilibrium is a tuple of strategies \(\bal^* = (\alpha^{1,*}, \alpha^{2,*}, \ldots, \alpha^{N,*}) \in \A\) such that for every player \(i \in \I\), every Markov policy \(\alpha^i \in \A^i\), and every $(t,\x)\in Q_T$, we have 
\[
J^i\big(t,\x,\bal^{*}\big) 
\leq 
J^i\big(t,\x,\big[\alpha^i,\bal^{-i,*}\big]_i\big).
\]


\subsection{Synthesis through dynamic programming}
\label{sec:SDP}

The synthesis of the game through dynamic programming is based on the players' value functions $V^i\colon Q_T\times \A^{-i}\to \R$, $i\in\I$, defined by
\[
V^i\big(t,\x,\bal^{-i}\big)
\defeq
\inf_{\alpha^i\in\A^i}
J^{i}\big(t,\x,\big[\alpha^i,\bal^{-i}\big]_i\big).
\]
Consider player $i \in \I$ and fix an arbitrary joint policy $\bal^{-i}\in\A^{-i}$ for the remaining $N - 1$ players. From the perspective of player $i$, these players are treated as fixed and therefore become part of the dynamics. Thus, player $i$ solves a single-player stochastic control problem and the associated Hamilton--Jacobi--Bellman (HJB) equation. More precisely, we seek $\mathcal V^{i}\colon Q_T\times \A^{-i}\to\R$, which for all $(t,\x,\bal^{-i})\in Q_T^0\times \A^{-i}$ satisfies
\begin{equation}\label{eq:HJB}
    \frac{\partial \V^i}{\partial t} \big(t,\x,\bal^{-i}\big)
    + \big(\cL_t \V^i\big)\big(t,\x,\bal^{-i}\big)
    + H^i\big(t,\x,\mathrm{D}_{\x}\V^i\big(t,\x,\bal^{-i}\big),\bal^{-i}(t,\x)\big)
    = 0,
\end{equation}
and for $(t,\x)\in\partial Q_T$ satisfies the boundary condition
\begin{equation}\label{eq:HJB_terminal}
        \V^i\big(t, \x,\bal^{-i}\big) = g^i(t,\x).
\end{equation}
The Hamiltonian $H^i\colon Q_T\times\R^n\times A^{-i}\to \R$ is given by 
\begin{align*}
    H^i\big(t,\x,p,\ba^{-i}\big):=
\inf_{a^i\in A^{i}}\Big(\big\langle\bar{\bb}\big(t,\x,\big[a^i,\ba^{-i}\big]_i\big),p\big\rangle + \bar{f}^i\big(t,\x,\big[a^i,\ba^{-i}\big]_i\big)\Big),
\end{align*}
and the second-order operators $\cL_t$, $t\in[0,T]$, which act on $\phi\in C^2(Q_T,\R)$, by
\begin{align*}
  (\cL_t\phi)(t,\x):=\frac{1}{2} \mathrm{Tr}\big(\bSigma(t,\x)\, \mathrm{D}_{\x}^2 \phi(t,\x)\, \bSigma^\top(t,\x)\big).
\end{align*}
Here, \(\mathrm{D}_{\x} \V^i\) and \(\mathrm{D}_{\x}^2 \V^i\) denote the gradient and Hessian of \(\V^i\) with respect to \(\x\), respectively, and \(\mathrm{Tr}\) denotes the trace operator. Verification theorems show that, in settings where the HJB equation~\eqref{eq:HJB} admits a sufficiently regular solution, this solution coincides with the value function, i.e., that $V^i(t,\x,\bal^{-i})=\V^i(t,\x,\bal^{-i})$ and that the optimal policy $\beta^{i}\colon Q_T\times \A^{-i}\to A^i$ is given by 
\begin{align*}
    \beta^{i}\big(t,\x,\bal^{-i}\big)
    =
    \kappa^i\big(t,\x,\mathrm{D}_{\x}\V^i\big(t,\x,\bal^{-i}\big),\bal^{-i}(t,\x)\big).
\end{align*}
For fixed $\bal^{-i}$, the policy $(t,\x) \mapsto \beta^{i}(t,\x,\bal^{-i})$ in $\A^i$ is given by $\kappa^i\colon Q_T\times \R^n\times A^{-i}\to A^i$, where
\begin{align*}
    \kappa^i\big(t,\x,p,\ba^{-i}\big)\in\argmin_{a^i\in A^i}
\Big(\big\langle\bar{\bb}\big(t,\x,\big[a^i,\ba^{-i}\big]_i\big),p\big\rangle + \bar{f}^i\big(t,\x,\big[a^i,\ba^{-i}\big]_i\big)\Big).
\end{align*}
We assume that $\kappa^1, \dots, \kappa^N$ exist and are unique, in the sense that, for every $(t,\x,p,\ba^{-i})\in Q_T\times \R^n\times A^{-i}$, each minimization problem defining $\kappa^1, \dots, \kappa^N$ admits a unique solution; see Example~\ref{ex:1} below. For the sake of exposition, throughout the present section we also assume the existence of a sufficiently regular $\V$ satisfying an appropriate verification theorem, and therefore write $V$ instead of~$\V$. This somewhat imprecise assumption is replaced in Section~\ref{sec:FP} by concrete and stronger assumptions.

We next introduce the vector-valued mappings $\bV \colon Q_T \times \A \to \R^N$, $\boldsymbol{\kappa} \colon Q_T \times \R^{n \times N} \times A \to A$ and $\bbeta \colon Q_T \times \A \to A$. Writing $\p=(p^1,\dots,p^N)\in\R^{n\times N}$, these are defined by
\begin{align*}
\bV(t,\x,\bal)
&\defeq
\big(
  V^1(t,\x,\bal^{-1}),
  \dots,
  V^N(t,\x,\bal^{-N})
\big),\\
\mathbf{\boldsymbol{\kappa}}(t,\x,\p,\ba)
&\defeq
\big(
  \kappa^1\big(t,\x,p^1,\ba^{-1}\big),
  \dots,
  \kappa^N\big(t,\x,p^N,\ba^{-N}\big)
\big),\\
\bbeta(t,\x,\bal)
&\defeq
\mathbf{\boldsymbol{\kappa}}\big(t,\x,\mathrm{D}_{\x} \bV(t,\x,\bal),\bal(t,\x)\big)
=
\big(\beta^1\big(t,\x,\bal^{-1}\big),\dots,\beta^N\big(t,\x,\bal^{-N}\big)\big).
\end{align*}
For $\boldsymbol{\kappa}$, we assume the following Lipschitz condition. There exist $L_{\kappa}$ and $L_{\kappa}^a \in [0,1)$ such that
\begin{align*}
\begin{split}
        &\tnorm{
        \boldsymbol{\kappa}(t_1, \x_1, \boldsymbol{p}_1, \boldsymbol{a}_1)
        -
        \boldsymbol{\kappa}(t_2, \x_2, \boldsymbol{p}_2, \boldsymbol{a}_2)}^2_{\R^{d \times N}}
    \\
    &\qquad\leq
    L_{\kappa}
    \big(
    |t_1-t_2|
    +
    \|\x_1-\x_2\|_{\R^n}^2
    +
    \tnorm{\boldsymbol{p}_1-\boldsymbol{p}_2}_{\R^{n \times N}}^2
    \big)
    +
    L_{\kappa}^a
    \tnorm{\ba_1-\ba_2}_{\R^{d\times N}}^2.
\end{split}
\end{align*}
Note that this implies Lipschitz continuity for $\kappa^i$ of the form
\begin{align}
\begin{split}
    &\|
        \kappa^i(t_1, \x_1, p_1, \boldsymbol{a}^{-i}_1)
        -
        \kappa^i(t_2, \x_2, p_2, \boldsymbol{a}^{-i}_2)
    \|^2_{\R^{d}}
    \\
    &\qquad\leq
    L_{\kappa}
    \big(
    |t_1-t_2|
    +
    \|\x_1-\x_2\|_{\R^n}^2
    +
    \|p_1-p_2\|_{\R^{n}}^2
    \big)
    +
    L_{\kappa}^a
    \tnorm{\ba^{-i}_1-\ba^{-i}_2}_{\R^{d\times (N-1)}}^2.
\end{split}\label{eq:kappai_lipschitz}
\end{align}
We remark that, for fixed $\bal\in\A$, the joint policy $\bbeta(\cdot,\cdot,\bal) \in \A$ is in general not optimal for the game, since each player $i\in\I$ optimizes against the fixed policies $\bal^{-i}$. Instead, the game is solved by finding a fixed point $\bal^*$ which, for all $(t,\x) \in Q_T$, satisfies
\begin{align*}
    \bal^*(t,\x) = \bbeta(t,\x, \bal^*),
\end{align*}
possibly in some subspace of $\A$. For such a policy $\bal^*$, we write with slight abuse of notation 
\begin{align*}
    V^i(t,\x) \defeq V^i(t,\x,\bal^{*,-i}), \qquad
    \bV(t,\x) \defeq \bV(t,\x,\bal^*),
\end{align*}
both defined on $Q_T$. By the definition of $\bbeta$, it follows that if $\p = \mathrm{D}_{\x} \bV(t,\x) \in \R^{n\times N}$, then $\bal^*$ satisfies
\begin{align}
    \bal^*(t,\x) = \boldsymbol{\kappa} \big(t, \x, \p, \bal^*(t,\x)\big), \label{eq:fp2}
\end{align}
for all $(t,\x) \in Q_T$. We next extend this relation to arbitrary $\p \in \R^{n\times N}$. To this end, we assume that the generalized Isaacs conditions hold; see, e.g., \cite{carmona2018probabilistic, friedman1972stochastic}. That is, for every $\p \in \R^{n\times N}$, there exists a fixed point $\bal^*_{\p} \in \A$ satisfying~\eqref{eq:fp2}, and the associated map
\begin{align}
    \bc(t, \x, \p) \defeq \bal^*_{\p}(t, \x) \label{eq:c_def}
\end{align}
is a well-defined function $Q_T \times \R^{n \times N} \to A$. See Example~\ref{ex:1} for one instance in which this holds. In addition, we assume that this fixed point is unique and that there exists $L_{\bc}\geq0$ such that for all $(t_1, \x_1, \p_1), (t_2, \x_2, \p_2) \in Q_T \times \R^{n \times N}$, we have
\begin{align}\label{eq:lip7}
    \tnorm{\bc(t_1,\x_1,\p_1)-\bc(t_2,\x_2,\p_2)}_{\R^{d\times N}}^2
    \leq
    L_{\bc}
    \big(
      |t_1-t_2|
      +
      \|\x_1-\x_2\|_{\R^n}^2
      +
      \tnorm{\p_1 - \p_2}_{\R^{n \times N}}^2
    \big).
\end{align}

The Hamiltonians associated with the Nash equilibrium are given by
\begin{align*}
    H^i(t,\x,\p)
=
\big\langle 
\bar\bb\big(t,\x,\bc(t,\x,\p)\big),p^i
\big\rangle
+
\bar{f}^i\big(t,\x,\bc(t,\x,\p)\big)
, \quad i\in\I.
\end{align*}
This allows us to write the resulting system of HJB equations explicitly. We seek $\bV\colon Q_T\to \R ^N$ such that, for all $(t,\x)\in Q_T^0$, we have
\begin{align}\label{eq:Nash_PDE_system1}
    \frac{\partial \bV}{\partial t}
    (t,\x)
    +
    (\bcL_t \bV)(t,\x)
    +
    \bH\big(t,\x,\mathrm{D}_{\x}\bV(t,\x)\big)=0,
\end{align}
and for all $(t,\x)\in\partial Q_T$ the boundary condition
\begin{align}\label{eq:Nash_PDE_system2}
    \bV(t,\x)=\g(t,\x).
\end{align}
Here, for $\bphi=(\phi^1,\dots,\phi^N)\in C^2(Q_T,\R^N)$, $(t,\x)\in Q_T$ and $\p \in \R^{n\times N}$, we have
\begin{align*}
    (\bcL_t \bphi)(t,\x)  
    \defeq
    \left[
    \begin{array}{c}
         (\cL_t\phi^1)(t,\x)\\
         \vdots\\
         (\cL_t\phi^N)(t,\x)
    \end{array}
    \right],\quad
    \bH
    (t,\x,\p)
    \defeq
    \left[ 
    \begin{array}{c}
         H^1(t,\x,\p)\\ 
         \vdots\\
         H^N(t,\x,\p)
    \end{array}
    \right],
\end{align*}
and $\g=(g^1,\dots,g^N)\colon \partial Q_T\to \R^N$. Since $g^i$ is Lipschitz, it follows that $\boldsymbol{g}$ is Lipschitz with constant
\begin{align*}
    L_{\boldsymbol{g}} \defeq \sum_{i \in \I} L_{g^i}.
\end{align*}
The system~\eqref{eq:Nash_PDE_system1}--\eqref{eq:Nash_PDE_system2} is called the \emph{Nash PDE system}. If a solution exists, then each player solves its corresponding HJB equation and is therefore optimal, provided the relevant verification theorem applies. This implies that the resulting solution leads to a Nash equilibrium $\bal^*$. For the exposition in this section, we assume existence and uniqueness of a sufficiently regular solution to the Nash PDE system. In Section~\ref{sec:FP} we replace this by concrete assumptions ensuring these properties.


\subsection{FBSDE formulation of the Nash system}
\label{sec:FBSDE_Nash_system}

We next introduce the FBSDE formulation of the Nash system. The optimal game dynamics are given by the family of predictable, square-integrable and up to modification unique stochastic processes $\X^{t,\x}\colon[t,T]\times\Omega\to \R^n$, $(t,\x)\in Q_T$, which satisfy, for all $s\in[t,T]$, $\P$-almost surely
\begin{align*}
    \X_s^{t,\x}
    =
    \x
    +
    \int_t^{s\wedge \tau}
    \bar \bb\big(r,\X_r^{t,\x},\bc
    \big(r,\X_r^{t,\x},\mathrm{D}_{\x}\bV\big(r,\X_r^{t,\x}\big)\big)\big)
    \,\d r
    +
    \int_t^{s\wedge \tau}
    \bSigma\big(r,\X_r^{t,\x}\big)
    \,\d \W_r.
\end{align*}
For $(t,\x) \in Q_T$, let $\Y^{t,\x}\colon[t,T]\times\Omega\to \R^N$, $\Z^{t,\x}\colon[t,T]\times\Omega\to \R^{n\times N}$ be the family of stochastic processes which, for $s\in[t,T]$, $\P$-almost surely are given by
\begin{equation}\label{eq:YZ1}
\begin{split}
\Y_s^{t,\x}
&=
\bV\big(s,\X_s^{t,\x}\big),\\
\Z_s^{t,\x}
&=
\bSigma^\top\big(s,\X_s^{t,\x}\big)\mathrm{D}_{\x}\bV\big(s,\X_s^{t,\x}\big).
\end{split}
\end{equation}
Assuming $\bV$ is sufficiently regular, It\^{o}'s formula and the HJB equations give $\P$-almost surely
\begin{align}\label{eq:unif_par}
\Y_s^{t,\x}
=
\g\big(\tau,\X_\tau^{t,\x}\big)
+
\int_{s\wedge \tau}^\tau
\bar\f
\big(r,\X_r^{t,\x},\bc\big(r,\X_r^{t,\x},\mathrm{D}_{\x}\bV\big(r,\X_r^{t,\x}\big)\big)\big)
\, \d r
-
\int_{s\wedge \tau}^\tau
\big(\Z_r^{t,\x}\big)^\top
\d \W_r,
\end{align}
where $\bar\f \defeq (\bar f^1,\dots,\bar f^N)\colon[0,T]\times\R^n\times A\to \R^N$. Since $\bar{f}^i$ is Lipschitz, $\bar \f$ is Lipschitz with constant
\begin{align*}
    L_{\bar \f} \defeq \sum_{i \in \I} L_{\bar{f}^i}.
\end{align*}
By uniform parabolicity, the map $\p \mapsto \bSigma^\top(t,\x)\p$ is injective, with inverse $\z\mapsto \bPhi(t,\x) \z$. This allows us to define $\bb\colon Q_T\times \R^{n \times N}\to\R^n$ and $\f\colon Q_T\times \R^{n \times N}\to \R^N$ by
\begin{align*}
    \bb(t,\x,\z)
    &\defeq
    \bar \bb\big(t,\x,\bc(t,\x,\bPhi(t,\x)\z)\big),\\
    \f(t,\x,\z)
    &\defeq
    \bar \f\big(t,\x,\bc(t,\x,\bPhi(t,\x)\z)\big),
\end{align*}
and write the equations for $\X^{t,\x}$, $\Y^{t,\x}$ and $\Z^{t,\x}$ as a coupled FBSDE, i.e., for all $(t,\x)\in Q_T$, $s\in[t,T]$, it holds $\P$-almost surely
\begin{align}\label{eq:FBSDE_full}
\begin{split}
    \X_s^{t,\x}&=\x + \int_t^{s\wedge \tau}\bb\big(r,\X_r^{t,\x},\Z_r^{t,\x}\big)\,\d r + \int_t^{s\wedge \tau}\bSigma\big(r,\X_r^{t,\x}\big)\,\d\W_r,\\
    \Y_s^{t,\x} &=\g\big(\tau,\X_\tau^{t,\x}\big) +\int_{s\wedge \tau}^\tau\f\big(r,\X_r^{t,\x},\Z_r^{t,\x}\big)\,\d r - \int_{s\wedge \tau}^\tau\big(\Z_r^{t,\x}\big)^\top\,\d\W_r.
    \end{split}
\end{align}
We refer to~\eqref{eq:FBSDE_full} as the \emph{Nash FBSDE system}. Our minimum assumptions on $\f$ and $\g$ are that, for all $(t,\x)\in Q_T$, there exists a unique solution
\begin{align*}
    \big(\X^{t,\x},\Y^{t,\x},\Z^{t,\x}\big)\in \mathbb S^{2,n}_{t,T}\times \mathbb S^{2,N}_{t,T} \times \mathbb H^{2,n,N}_{t,T},
\end{align*}
where $\mathbb S^{2,n}_{t,T}$, $\mathbb S^{2,N}_{t,T}$ and $\mathbb H^{2,n,N}_{t,T}$ are defined analogously to $\mathbb S^{2,n}_{T}, \mathbb S^{2,N}_{T}$ and $\mathbb H^{2,n,N}_{T}$ for processes defined on the interval $[t,T]$. In addition, for every $p\in[2,\infty)$, we assume that $\X^{t,\x}\in \mathbb S^{p,n}_{t,T}$. In Section~\ref{sec:FP}, we impose concrete assumptions that guarantee these properties.

\subsection{Additional notation for non-equilibrium play}\label{sec:non_equilibrium_play}

For later use, we introduce the control problem for player $i\in\I$ when the policies of the other players are fixed. For this purpose, define $\tilde\bb^i\colon Q_T\times \R^n\times A^{-i}\to\R^n$ and $\tilde\ell^i\colon Q_T\times \R^n\times A^{-i}\to\R$ by
\begin{equation}\label{eq:bifi}
\begin{split}
\tilde\bb^i\big(t,\x,z,\boldsymbol a^{-i}\big)
&\defeq
\bar \bb\big(t,\x,\big[\kappa^i(t,\x,\bPhi(t,\x)z,\boldsymbol a^{-i}),\boldsymbol a^{-i}\big]_i\big),\\
\tilde\ell^i\big(t,\x,z,\boldsymbol a^{-i}\big)
&\defeq
\bar f^i\big(t,\x,\big[\kappa^i(t,\x,\bPhi(t,\x)z,\boldsymbol a^{-i}),\boldsymbol a^{-i}\big]_i\big).
\end{split}
\end{equation}
Given any admissible control \(\boldsymbol\beta^{-i}\in\mathbb A^{-i}\) for the other \(N-1\) players, player \(i\in\I\) solves the FBSDE
\begin{align}\label{eq:FBSDE_single_player}
\begin{split}
    \X_t^{i,\boldsymbol\beta^{-i}}
    &=
    \x_0
    +
    \int_0^{t\wedge \tau}
      \tilde\bb^i\big(
        s,\X_s^{i,\boldsymbol\beta^{-i}},
        Z_s^{i,\boldsymbol\beta^{-i}},
        \boldsymbol\beta^{-i}_s
      \big)
      \,\d s
    +
    \int_0^{t\wedge \tau}
      \bSigma\big(s,\X_s^{i,\boldsymbol\beta^{-i}}\big)\,\d\W_s,
    \\
    Y_t^{i,\boldsymbol\beta^{-i}}
    &=
    g^i\big(\tau,\X_\tau^{i,\boldsymbol\beta^{-i}}\big)
    +
    \int_{t\wedge \tau}^{\tau}
      \tilde\ell^i\big(
        s,\X_s^{i,\boldsymbol\beta^{-i}},
        Z_s^{i,\boldsymbol\beta^{-i}},
        \boldsymbol\beta^{-i}_s
      \big)
      \,\d s
    -
    \int_{t\wedge \tau}^{\tau}
      \big(Z_s^{i,\boldsymbol\beta^{-i}}\big)^\top\,\d\W_s.
\end{split}
\end{align}
Since the state variable is shared among all players, we use boldface notation for $\X^{i,\boldsymbol\beta^{-i}}$. In contrast, we use non-bold notation for $Y^{i,\boldsymbol\beta^{-i}}$ and $Z^{i,\boldsymbol\beta^{-i}}$, since these are player-specific value and control processes. However, the processes $\X^{1,\boldsymbol\beta^{-1}},\dots,\X^{N,\boldsymbol\beta^{-N}}$ are generally different, since each player optimizes its own control given $\boldsymbol\beta^{-i}$.

\subsection{Examples of games}
\label{sec:examples}

This section presents a sequence of examples with progressively stronger assumptions. We begin with a setting based only on smoothness, growth, and boundedness conditions, which yields rather restrictive results. We then introduce additional structure on the domain, dynamics, and costs, which leads to a linear--quadratic class for which the Nash system can be reduced to a coupled Riccati system.

\begin{example}\label{ex:unbounded_domain}
Let $\D=\R^n$, so that $\partial Q_T=\{T\}\times\R^n$ and $\P(\tau=T)=1$. Assume $A$ to be compact and the functions $\bar \bb$, $\bSigma$, $\bar{f}^i$, and $g^i$, $i\in\I$, to be bounded. Then, under some additional assumptions, \cite[Proposition 2.13]{carmona2018probabilistic} guarantees existence and uniqueness of a solution to the Nash system~\eqref{eq:Nash_PDE_system1}--\eqref{eq:Nash_PDE_system2}. The boundedness is a very strong condition one has to pay for not imposing more structure.
\end{example}


\begin{example}\label{ex:bounded_domain}
We consider a bounded domain $\D$ with boundary $\partial \D$ of class $C^2$, and assume that $A$ is compact. In addition to our standing assumptions, assume that $\bSigma$ is bounded and continuously differentiable in time and space with bounded derivatives, and that $\g$ is twice differentiable in the sense required in~~\cite{friedman1972stochastic}. Then~\cite[Theorem~1]{friedman1972stochastic} guarantees the existence of a (nonclassical) solution to the Nash system~\eqref{eq:Nash_PDE_system1}--\eqref{eq:Nash_PDE_system2}. These conditions are considerably less restrictive than in the unbounded case $\D=\R^n$. For further refined results, see~\cite{bensoussan2002smooth}.
\end{example}

\begin{example}\label{ex:1}
Here, we consider a subclass of games satisfying the setting of Example~\ref{ex:bounded_domain} and thus have a unique solution to the Nash system. The drift coefficient has the form
\begin{align*}
    \bar \bb(t,\x,\ba)
    =
    \bb_1(t,\x)
    +
    \sum_{i\in\I}
    b_2^i(t,\x)a^i,
\end{align*}
where $\bb_1\colon Q_T\to\R^n$ and $b_2^i\colon Q_T\to \R^{n\times d}$, $i\in\I$, satisfy for all $(t_1,\x_1),(t_2,\x_2)\in Q_T$ the bounds
\begin{align*}
    \big\|\bb_1(t_1,\x_1)-\bb_1(t_2,\x_2)\big\|_{\R^{n}}
    &\leq C
    \big(
      |t_1-t_2|^{\frac12}
      +
      \|\x_1-\x_2\|_{\R^n}
    \big),\\
    \tnorm{b_2^i(t_1,\x_1)-b_2^i(t_2,\x_2)}_{\R^{n\times d}}
    &\leq C
    \big(
      |t_1-t_2|^{\frac12}
      +
      \|\x_1-\x_2\|_{\R^n}
    \big).
\end{align*}
In addition, $b_2^i$ is bounded on $Q_T$ for each $i \in \I$. Under these assumptions $\bar\bb$ satisfies~\eqref{eq:bhat}. For each $i \in \I$, we consider the running cost
\begin{align*}
    \bar{f}^i(t,\x,\ba)
    =
    f_1^i(t,\x)
    +
    \frac12
    \sum_{j\in\I}
    \big\langle 
        K_a^{i,j} a^j, a^j
    \big\rangle
    +
    \sum_{j\in\I}
    \big\langle 
        K_{a\x}^{i,j} a^j, \x
    \big\rangle.
\end{align*}
Here, $f_1^i\colon Q_T \to \R$ determines the state-based cost, whereas $K_a^{i,j}\in\R^{d\times d}$, $K_{a\x}^{i,j}\in\R^{n\times d}$, $i,j\in\I$, penalize control and control-state relation, respectively. We assume $f_1^i$ to have polynomial growth and $K_a^{i,i}$ to be positive definite and invertible. With these data, the related Hamiltonian for player $i$ reads
\begin{align*}
H^i(t,\x,p,\ba^{-i})
&=
\bigg\langle 
  \bb_1(t,\x)
  +
  \sum_{j\in\I\setminus\{i\}}
  b_2^j(t,\x)a^j
  ,p
\bigg\rangle
+
f_1^i(t,\x)
+
\frac12
\sum_{j\in\I\setminus\{i\}}
\big\langle 
    K_a^{i,j} a^j, a^j
\big\rangle\\
&\quad+
\sum_{j\in\I\setminus\{i\}}
\big\langle 
    K_{a\x}^{i,j} a^j,\x 
\big\rangle
+
\inf_{a^i\in A^{i}}
\bigg(
  \big\langle 
    b_2^i(t,\x)a^i,p
  \big\rangle 
  + 
  \frac12
  \big\langle 
    K_a^{i,i} a^i, a^i
  \big\rangle
  +
  \big\langle
    K_{a\x}^{i,i} a^i, \x
  \big\rangle
  \bigg).
\end{align*}
The minimization problem is quadratic and when the minimum is attained in the interior of $A^i$, its solution is explicit. This yields the optimum
\begin{align*}
    \kappa^i
    \big(
      t,\x,p,\ba^{-i}
    \big)
    =
    \Phi^i(t,\x,p)
    +
    \mathcal{E}^i_A(t,\x,p),
\end{align*}
where $\Phi^i\colon Q_T\times \R^n\to\R^d$, $i\in\I$ is given by
\begin{align*}
\Phi^i(t,\x,p)
\defeq
-\big(
  K_{a}^{i,i}
\big)^{-1}
\Big(
  b_2^i(t,\x)^\top p
  +
    \big(
      K_{a\x}^{i,i}
    \big)^\transpose
    \x
\Big),
\end{align*}
and the function $\mathcal{E}^i_A$ corrects $\kappa^i$ when the minimum is attained on the boundary of $A^i$. Since $\kappa^i$ and $\boldsymbol{\kappa}$ do not depend on $\ba$, the existence of the function $\bc$ through the fixed-point problem~\eqref{eq:fp2} is trivial. For the same reason, the Nash system is coupled only through $\bar\bb$ and $\bar{f}^i$ and not through $\boldsymbol{\kappa}$. This leads to $V^i(t,\x,\ba^{-i})=V^i(t,\x)$ for all $i\in\I$, $\ba^{-i}\in A^{-i}$ and $(t,\x)\in Q_T^0$. Inserting the gradient, we obtain the resulting Nash system: find $\bV=(V^1,\dots,V^N)\colon Q_T\to\R^N$ that, for all $(t,\x)\in Q_T^0$ and $i\in\I$, satisfies 
\begin{align*}
  -\frac{\partial V^i}
  {\partial t}
  &=
  \frac{1}{2} \mathrm{Tr}\big(\bSigma\, \mathrm{D}_{\x}^2 V^i\, \bSigma^\top\big) 
  +
  \langle 
    \bb_1,\mathrm{D}_{\x}V^i
  \rangle
  +
  \sum_{j\in\I}
  \big\langle 
    \Phi^{j}(\mathrm{D}_{\x}
    V^j)
  ,
  \big(b_2^j\big)^\top
  \mathrm{D}_{\x}V^i
\big\rangle
+
f_1^i\\
&\quad
+
\frac12
\sum_{j\in\I}
\big\langle K_{a}^{i,j} 
  \Phi^{j}(\mathrm{D}_{\x}V^j),
  \Phi^{j}(\mathrm{D}_{\x}V^j)
\big\rangle
+
\sum_{j\in\I}
\big\langle K_{a\x}^{i,j} 
  \Phi^{j}(\mathrm{D}_{\x}V^j),\x
\big\rangle
+
\boldsymbol{\Xi}^i(\mathrm{D}_{\x}\bV),
\end{align*}
and for all $(t,\x)\in \partial Q_T$ the boundary condition
$
  V^i(t,\x)
  = 
  g^i(t,\x)
$ holds. Here, $\boldsymbol{\Xi}^i$ corrects for replacing the optimal control $\kappa^i(t,\x,\mathrm{D}_{\x}\bV(t,\x))$, restricted to $A^i$, in the explicit terms of the equation, with the unrestricted control $\Phi^i(t,\x,\mathrm{D}_{\x}\bV(t,\x))$. This in particular reveals the structure of the unconstrained problem with $A=\R^{d\times N}$ or the main term when $A$ is compact but large.
\end{example}



\begin{example}[Linear--quadratic games]\label{ex:LQ}
We consider the setting of Example~\ref{ex:1}, with the modifications that $\D=\R^n$ and $A=(\R^d)^N$. Here, the non-compact set $A$ is not covered by our assumptions. We nevertheless include this example, since it is a central case, admits a semi-explicit solution, and is used in our numerical experiment. We further specify
\begin{align*}
    \bb_1(t,\x)  &= F(\boldsymbol{\xi}-\x),  \quad
    b_2^i(t,\x)   = B^i,        \quad
    \bSigma(t,\x) = \CC,        \quad
    f_1^i(t,\x)   = \big\| \stateCostPlayeri\x\big\|_{\R^n}^2, \quad 
    g^i(\x)       = \big\|G^i\x\big\|_{\R^n}^2,
\end{align*}
for $i \in \I$, where $F\in\R^{n\times n}$, $\boldsymbol{\xi}\in\R^n$, $\CC\in\R^{n\times n}$, $B^i\in\R^{n\times d}$, $\stateCostPlayeri\in\R^{n\times n}$, and $G^i\in\R^{n\times n}$. The Nash system reads: find $\bV=(V^1,\dots,V^N)\colon Q_T\to\R^N$ that, for all $(t,\x)\in Q_T^0$ and $i\in\I$, satisfies 
\begin{align}
\begin{split}
      -\frac{\partial V^i}
  {\partial t}
  &=
  \frac{1}{2} \mathrm{Tr}\big(\CC\, \mathrm{D}_{\x}^2 V^i\, \CC^\top\big) 
  +
  \big\langle 
    F(\boldsymbol{\xi}-\x),\mathrm{D}_{\x}V^i
  \big\rangle
+
\sumAllPlayersj
\big\langle
    \Phij,
    \big(
        \controlMatrixPlayerj
    \big)^\transpose
    \gradient
    \valueFunctionPlayeri
\big\rangle
\\
&\qquad
+
\big\|\stateCostPlayeri\x\big\|_{\R^n}^2
+
\frac12
\sum_{j\in\I}
\big\langle K_{a}^{i,j}
\Phij,
\Phij
\big\rangle
+
\sumAllPlayersj
\big\langle K_{a\x}^{i,j} 
\Phij,
\x
\big\rangle,
\end{split} \label{eq:HJB_equation_LQG_example}
\end{align}
and for all $(t,\x)\in\partial Q_T$ the boundary condition
$V^i(t,\x)
=
\|G^i\x\|_{\R^n}^2$. Here, $\Phii$ is the straightforward modification of $\Phi^i$ given by
\begin{align}
    \Phii
    =
    -   
        \big(
            K_{a}^{i,i}
        \big)^{-1}
        \Big(
            \big(
                \controlMatrixPlayeri
            \big)^\transpose
            \gradient
            \valueFunctionPlayeri
        +   
            \big(
                K_{a\x}^{i,i}
            \big)^\transpose
            \x
        \Big).
    \label{eq:Phii_Ex}
\end{align}
The system~\eqref{eq:HJB_equation_LQG_example} admits a semi-analytical solution of the form
\begin{align}\label{eq:LQ_Vi}
    \valueFunctionPlayeri(t,\x)
    =
        \frac{1}{2}
        \left\langle
            \x,
            P^i_t
            \x
        \right\rangle
    +
        \left\langle
            Q^i_t,
            \x
        \right\rangle
    +
        R^i_t,
\end{align}
where $P^i \colon [0,T]\to \R^{n\times n}$, $Q^i \colon[0,T]\to\R^n$ and $R^i\colon[0,T]\to\R$, $i\in\I$, solve the coupled Riccati system
\begin{align}
    \begin{split}\label{eq:Riccati_system}
        \dot{P}^i
    &=
        \attractionMatrix^\transpose
        P^i
    +   
        P^i
        \attractionMatrix
    -
        \big(
            \stateCostPlayeri
        \big)^\transpose
        \stateCostPlayeri
    -
        2
        \sumAllPlayersj
        \big(
            \PPshort^i
            \big(
                \ThetaOne
                \PPshort^j
            +
                \ThetaTwo       
            \big)
        +
            \PPshort^j
            \ThetaThree
            \PPshort^j
        +
            \ThetaFourFiveSeven
            \PPshort^j
        +   
            \ThetaSixEight
        \big),
    \\
    \dot{Q}^i
    &=
    -
        \PPshort^i
        \attractionMatrix
        \boldsymbol{\xi}
    +
        \attractionMatrix^\transpose
        Q^i
    -
        \sumAllPlayersj
        \big(
            \PPshort^i
            \ThetaOne
            Q^j
        +
            \PPshort^j
            \big(
                \ThetaOne
            \big)^\transpose
            Q^i
        +
            \big(
                \ThetaTwo
            \big)^\transpose
            Q^i
        +  
            \PPshort^j
            \ThetaThreeTilde
            Q^j
        +
            \ThetaFourFiveSeven
            Q^j
        \big),
    \\
    \dot{R}^i
    &=
    -
        \frac{1}{2} \Tr 
        \big( 
            \diffusionMatrix 
            \PPshort^i
            \diffusionMatrix^\transpose 
        \big)
    -
        \left\langle 
                \attractionMatrix 
                \boldsymbol{\xi},
                Q^i
        \right\rangle
    -
        \sumAllPlayersj
        \big(
            \big\langle
                Q^i,
                \ThetaOne
                Q^j
            \big\rangle
        +
            \big\langle
                Q^j,
                \ThetaThree
                Q^j
            \big\rangle
        \big).
    \end{split}
\end{align}
Here, the notation $\PPshort^i = \tfrac{1}{2} \big( P^i + (P^i)^\transpose\big)$ has been used for brevity, as well as the matrices
\begin{align*}
    \ThetaOne 
    &= 
    -
        \controlMatrixPlayerj
        \left(
            \controlCostPlayerjj
        \right)^{-1}
        \left(
            \controlMatrixPlayerj
        \right)^\transpose
    , \\
    \ThetaTwo
    &=
    -
        \controlMatrixPlayerj
        \left(
            \controlCostPlayerjj
        \right)^{-1}
        \left(
            \controlstateCostPlayerjj
        \right)^\transpose,
    \\
    \ThetaThree
    &= 
        \tfrac{1}{2}
        \controlMatrixPlayerj   
        \left(
            \controlCostPlayerjj
        \right)^{-\transpose}
        \controlCostPlayerij
        \left(
            \controlCostPlayerjj
        \right)^{-1}
        \left(
            \controlMatrixPlayerj
        \right)^\transpose,
    \\
    \ThetaThreeTilde
    &=
        \tfrac{1}{2}
        \controlMatrixPlayerj   
        \left(
            \controlCostPlayerjj
        \right)^{-\transpose}
        \Big(
            \left(
                \controlCostPlayerij
            \right)^\transpose
        +
            \controlCostPlayerij
        \Big)
        \left(
            \controlCostPlayerjj
        \right)^{-1}
        \left(
            \controlMatrixPlayerj
        \right)^\transpose,
    \\
    \ThetaFourFiveSeven
    &=
        \Big(
            \tfrac{1}{2}
            \controlstateCostPlayerjj
            \big(
                \controlCostPlayerjj
            \big)^{-\transpose}
            \Big(
                \big(
                    \controlCostPlayerij
                \big)^\transpose
            +
                \controlCostPlayerij
            \Big)
        -   
            \controlstateCostPlayerij
        \Big)
        \left(
            \controlCostPlayerjj
        \right)^{-1}
        \left(
            \controlMatrixPlayerj
        \right)^\transpose,
    \\
    \ThetaSixEight
    &=
        \Big(
            \tfrac{1}{2}
            \controlstateCostPlayerjj
            \left(
                \controlCostPlayerjj
            \right)^{-\transpose}
            \controlCostPlayerij
        -
            \controlstateCostPlayerij
        \Big)
        \left(
            \controlCostPlayerjj
        \right)^{-1}
        \left(
            \controlstateCostPlayerjj
        \right)^\transpose.
\end{align*}
\end{example}

\begin{remark}\label{rem:bdd_domain}
For FBSDEs with one-dimensional backward process, i.e., for stochastic control, \cite{delarue2006weak} 
develops a theory of (probabilistically) weak solutions to the corresponding FBSDE. Under general and for applications very natural assumptions, existence and uniqueness of weak solutions are guaranteed, and through the FBSDE also a solution to the HJB equation. It would be desirable with an analogous theory for the Nash FBSDE system, but, to the best of our knowledge, it has not been established.
\end{remark}

\section{Fictitious play and its convergence}\label{sec:FP}

System~\eqref{eq:FBSDE_full} is a coupled FBSDE with an $N$-dimensional BSDE component, leading to significant computational complexity. To address this, we adopt the concept of fictitious play, originally introduced for zero-sum games in~\cite{brown1949some} and further developed in~\cite{brown1951iterative,robinson1951iterative}. The approach has since been extended to stochastic differential games (see, e.g., \cite{elliott1975stochastic}), and has recently gained renewed attention in related computational approaches~\cite{hu2019deep,han2020deep}.

Fictitious play is an iterative best-response scheme: at each iteration, every player updates their strategy by solving an optimal control problem assuming the opponents’ strategies are fixed. In this stochastic differential game setting, fixing the strategies of the other players reduces each player’s problem to a single-agent stochastic control problem. The control problem admits an FBSDE representation, whose solution determines the player’s optimal feedback response. Repeating this procedure generates a sequence of strategies that, under suitable conditions, converges to a Nash equilibrium.

Throughout this section, we fix the initial value $\x_0\in \D$. Recalling the family $(\X^{t,\x},\Y^{t,\x},\Z^{t,\x})$ from~\eqref{eq:FBSDE_full}, our aim is to approximate $(\X,\Y,\Z):=(\X^{0,\x_0},\Y^{0,\x_0},\Z^{0,\x_0})$. The fictitious-play iteration starts from the zero Markov policy $\alpha^{i,0} = 0 \in \A^i$ for each player $i \in \I$. For $m \geq 1$, each player $i \in \I$ solves~\eqref{eq:FBSDE_single_player} with $\bbeta^{-i} = \bal^{-i,m-1}$. This yields a non-equilibrium Markov map $\zeta^{i,m}\colon [0,T]\times\R^n\to\R^n$, and the policy is updated by
\begin{align}
    \label{eq:FP_construction}
    \alpha^{i,m}(t, \x)
    :=
    \kappa^i 
    \big(
        t,\x,\bPhi(t,\x)\zeta^{i,m},\bal^{-i,m-1}(t,\x)
    \big).
\end{align}
The iterative scheme leads to FBSDEs whose solution triples are denoted by $\big(\X^{i,m},\Y^{i,m},\Z^{i,m}\big)$.

The remainder of this section is devoted to the convergence analysis of the fictitious-play scheme. To quantify the error at iteration $m$, we introduce
\begin{align}
    \delta \X_t^{i,m} 
  \defeq \X_t-\X_t^{i,m}, \qquad \delta Y_t^{i,m} 
  \defeq Y_t^i-Y_t^{i,m}. \label{eq:error_quantities}
\end{align}
In Section~\ref{sec:reformulation}, a reformulation of the fictitious-play FBSDE is presented. Then, under the assumptions stated in Section~\ref{sec:setting3}, we show in Section~\ref{sec:conv_FP} that the error quantities in~\eqref{eq:error_quantities}, together with the associated feedback controls, converge to zero at a geometric rate as $m \to \infty$.

\subsection{A reformulation of the fictitious play FBSDEs}\label{sec:reformulation}

The coefficients $\tilde\bb^i$ and $\tilde\ell^i$ in~\eqref{eq:FBSDE_single_player} depend on arbitrary policies $\bbeta^{-i}\in\A^{-i}$. For the convergence analysis, we reformulate the corresponding FBSDEs in terms of the Markov maps generated by the fictitious-play iteration. For each $i \in \I$ and $m \geq 1$, we introduce functions $\bb^{i,m}\colon Q_T\times \R^n\times\R^{(N-1)\times n}\to\R^n$ and $\ell^{i,m}\colon Q_T\times \R^n\times\R^{n\times (N-1)}\to\R$ such that, at iteration $m$, player~$i$ solves the FBSDE
\begin{align}
\label{eq:FP_FBSDE}
\begin{split}
    \X_t^{i,m}\! 
    &=\! \x_0
    + \int_0^{t\wedge\tau^{i,m}} 
    \bb^{i,m}\big(
    s,\X_s^{i,m},
    Z_s^{i,m},
    \bzeta^{-i,m-1}\big(s,\X_s^{i,m}\big)
    \big)\,\d s
    + \int_0^{t\wedge\tau^{i,m}} 
    \bSigma\big(s,\X_s^{i,m}\big)\,\d\W_s,
    \\
    Y_t^{i,m}\!
    &=\! g^{i}\big(\tau^{i,m},\X_{\tau^{i,m}}^{i,m}\big) 
    \!+\! \int_{t\wedge\tau^{i,m}}^{\tau^{i,m}}\! 
    \ell^{i,m}\big(
    s,\X_s^{i,m},
    Z_s^{i,m},
    \bzeta^{-i,m-1}(s,\X_s^{i,m})
    \big)\,\d s
    \!-\! \int_{t\wedge\tau^{i,m}}^{\tau^{i,m}}\! 
    \big(Z_s^{i,m}\big)^\top \d\W_s,
\end{split}
\end{align}
where $\zeta^{i,m} : [0,T] \times \R^n \to \R^n$ satisfies for almost all $t\in[0,T]$, $\P$-almost surely
\begin{align}
    Z_t^{i,m}=\zeta^{i,m}\big(t,\X_t^{i,m}\big) \label{eq:Z_FP}.
\end{align}
We now identify the coefficients in this reformulation. For the moment, assume that the Markov maps $\zeta^{i,m}$ exist. Sufficient assumptions for this are stated in Section~\ref{sec:setting3}. For $i \in \I$, set $\kappa^{i,0} \equiv 0$ and define for $m \geq 1$ the functions $\kappa^{i,m}\colon Q_T\times\R^n\to A^i$ by
\begin{align*}
    \kappa^{i,m}\big(t,\x,z\big)
    \defeq
    \kappa^i
    \big(
        t,
        \x,
        \bPhi(t,\x)z,
        \boldsymbol\kappa^{-i,m-1}\big(
        t,\x,\bzeta^{-i,m-1}(t,\x)
        \big)
    \big),
\end{align*}
where for \((t,\x,\boldsymbol z^{-i})\in Q_T\times\R^{(N-1)\times n}\),
\begin{align*}
    &\boldsymbol\kappa^{-i,m}\big(t,\x,\boldsymbol z^{-i}\big)
    \defeq
    \big(
        \kappa^{1,m}\big(t,\x,z^1\big),
        \ldots,
        \kappa^{i-1,m}\big(t,\x,z^{i-1}\big),
        \kappa^{i+1,m}\big(t,\x,z^{i+1}\big),
        \ldots,
        \kappa^{N,m}\big(t,\x,z^N\big)
    \big).
\end{align*}
When necessary, we interpret $\zeta^{i,0} \equiv 0$. We claim that the coefficients $\bb^{i,m}$ and $\ell^{i,m}$ are given by
\begin{align*}
    \bb^{i,m}\big(t,\x,z,\boldsymbol z^{-i}\big) 
    &\defeq
    \tilde\bb^i\big(
    t,\x,
    z,
    \boldsymbol\kappa^{-i,m-1}\big(
    t,\x,\boldsymbol z^{-i}
    \big)
    \big),
    \\
    \ell^{i,m}\big(t,\x,z,\boldsymbol z^{-i}\big)
    &\defeq
    \tilde\ell^i\big(
    t,\x,
    z,
    \boldsymbol\kappa^{-i,m-1}\big(
    t,\x,\boldsymbol z^{-i}
    \big)
    \big).
\end{align*}
To prove the claim, it suffices to identify the opponent policy appearing in $\tilde\bb^i$ and $\tilde\ell^i$ at iteration $m$. By construction, this policy is $\bal^{-i,m-1}$. Thus, the claim follows once we show for every $r\geq 0$ that
\begin{align}
    \label{eq:induction_proof_relation_first}
    \bal^{-i,r}(t,\x) = \boldsymbol\kappa^{-i,r}\big(t,\x,\bzeta^{-i,r}(t,\x)\big).
\end{align}
Equivalently, it is sufficient to prove for every $i \in \I$ that
\begin{align}
    \label{eq:induction_proof_relation}
    \alpha^{i,r}(t,\x) = \kappa^{i,r}\big(t,\x,\zeta^{i,r}(t,\x)\big).
\end{align}
We prove the claim by induction. For $r=0$, the relation~\eqref{eq:induction_proof_relation} holds trivially, since both sides are initialized to the zero Markov policy. Assume that~\eqref{eq:induction_proof_relation}, equivalently~\eqref{eq:induction_proof_relation_first}, holds for some fixed $r \geq 0$. We show that it then holds for $r+1$. Using the construction~\eqref{eq:FP_construction}, the induction hypothesis, and the definition of $\kappa^{i,r}$, we obtain
\begin{align*}
\alpha^{i,r+1}(t,\x)
&=
\kappa^i\bigl(
t,x,
\bPhi(t,x)\zeta^{i,r+1}(t,\x),
\bal^{-i,r}(t,\x)
\bigr)\\
&=
\kappa^i\bigl(
t,\x,
\bPhi(t,\x)\zeta^{i,r+1}(t,\x),
\boldsymbol\kappa^{-i,r}
\big(t,\x,\bzeta^{-i,r}(t,\x)\big)
\bigr)\\
&=
\kappa^{i,r+1}
\bigl(t,\x,\zeta^{i,r+1}(t,\x)\bigr).
\end{align*}
This completes the proof.

\color{black}

\subsection{Assumptions}
\label{sec:setting3}
Let the setting from Sections~\ref{sec:setting2}--\ref{sec:FBSDE_Nash_system} hold.
For technical reasons, we use the convention that $\bb,\bSigma,\f,\bzeta$ are killed at $\tau$, while $\bb^{i,m},\ell^{i,m},\bzeta^{i,m}$ are killed at $\tau^{i,m}$, by multiplication with the indicator functions $1_{\{t\leq\tau\}}$ and $1_{\{t\leq\tau^{i,m}\}}$, respectively. Thus, these coefficients vanish on the boundary $\partial Q_T$, so that the processes $\X,Y$ and $\X^{i,m},Y^{i,m}$ remain constant after $\tau$ and $\tau^{i,m}$, respectively. This convention does not redefine the coefficient functions and therefore does not affect their assumptions.

For all $i \in \I$ and $m \ge 1$, we assume that~\eqref{eq:FP_FBSDE} admits a unique solution
\begin{align*}
    \big(\X^{i,m}, Y^{i,m}, Z^{i,m}\big) \in \mathbb S^{2,n}_{T} \times \mathbb S^{2,1}_{T} \times \mathbb H^{2,n}_{T}.
\end{align*}
Moreover, for all $i \in \I$ and $m \geq 0$, there exists $L_{\zeta^{i,m}}^{\FP}$ such that for all $(t_1,\x_1),(t_2,\x_2)\in Q_T$ we have
\begin{align}
    \big\|
      \zeta^{i,m}(t_1,\x_1)
      -
      \zeta^{i,m}(t_2,\x_2)
    \big\|^2_{\R^{n}}
    &\leq
    L_{\zeta^{i,m}}^{\FP}
    \big(|t_1-t_2|
    +
    \|\x_1-\x_2\|_{\R^n}^2\big). \label{eq:lipschitz_zetaim}
\end{align}
In addition, we assume that
\begin{align}
\label{eq:LFP}
    \LFP
    :=
    \sup_{m\in\N}
    \max_{i\in\I}
    L_{\zeta^{i,m}}^{\FP}
    <\infty, 
\end{align}
which is an assumption that we do not verify theoretically. 
Moreover, we assume that there exists $L_{\bzeta}\geq0$ such that for all $(t_1,\x_1),(t_2,\x_2)\in Q_T$ we have
\begin{align*}
    \tnorm{\bzeta(t_1,\x_1)-\bzeta(t_2,\x_2)}^2_{\R^{n\times N}}
    &\leq
    L_{\bzeta}
    \big(|t_1-t_2|
    +
    \|\x_1-\x_2\|_{\R^n}^2\big).
\end{align*}
These are abstract assumptions whose validity must be verified in any concrete application. See Remark~\ref{remark:gradient_bound} for further discussion.

We introduce the following Lipschitz condition (L). There exist constants $L_{\bb}, L_{\f}, L_\kappa^{\FP}, L_{\bb}^{\FP}, L_{\ell}^{\FP}\geq 0$ such that, for all $i \in \I$, $m \geq 1$, and all admissible arguments,
\begin{align}
    \begin{aligned}
        \|\bb(t_1,\x_1,\z_1)-\bb(t_2,\x_2,\z_2)\|_{\R^{n}}^2
        &\leq
        L_{\bb}
        \big(
          |t_1-t_2|
          +
          \|\x_1-\x_2\|_{\R^n}^2
          +
          \tnorm{\z_1 - \z_2}^2_{\R^{n\times N}}
        \big),\\
        \|\f(t_1,\x_1,\z_1)-\f(t_2,\x_2,\z_2)\|_{\R^{N}}^2
        &\leq
        L_{\f}
        \big(
          |t_1-t_2|
          +
          \|\x_1-\x_2\|_{\R^n}^2
          +
          \tnorm{\z_1 - \z_2}^2_{\R^{n\times N}}
        \big),
            \\
        \big\|
            \kappa^{i,m}(t_1,\x_1,z_1)
            -
            \kappa^{i,m}(t_2,\x_2,z_2)
        \big\|_{\R^d}^2
        &\leq
        L_\kappa^{\FP}
        \big(
          |t_1-t_2|
          +
          \|\x_1-\x_2\|_{\R^n}^2
          +
          \| z_1 - z_2 \|^2_{\R^{n}}
        \big),\\    
        \big\|
          \bb^{i,m}(t_1,\x_1,z_1^i,\z_1^{-i})-\bb^{i,m}(t_2,\x_2,z^i_2,\z_2^{-i})
        \big\|_{\R^{n}}^2
        &\leq
        L_{b}^{\FP}
        \big(
          |t_1-t_2|
          +
          \|\x_1-\x_2\|_{\R^n}^2
          +
          \tnorm{\z_1 - \z_2}^2_{\R^{n\times N}}
        \big),\\    
        \big|
          \ell^{i,m}(t_1,\x_1,z_1^i,\z_1^{-i})
          -
          \ell^{i,m}(t_2,\x_2,z_2^i,\z_2^{-i})
        \big|^2
        &\leq
        L_{\ell}^{\FP}
        \big(
          |t_1-t_2|
          +
          \|\x_1-\x_2\|_{\R^n}^2
          +
          \tnorm{\z_1 - \z_2}^2_{\R^{n\times N}}
        \big).
    \end{aligned}
    \tag{L}
\end{align}
Here, admissible arguments means either all $\z_1, \z_2\in\mathbb R^{n\times N}$, in the global
case, or all $\z_1,\z_2$ in a bounded subset of $\mathbb R^{n\times N}$, in the local case. 

We introduce two alternative sets of assumptions and require that at least one of them holds.

\begin{enumerate}[label=(\Alph*)]

\item \label{assump:gradient}\textbf{Gradient bound condition:} The domain is either $\D = \R^n$ or $|\D| < \infty$ with boundary $\partial \D$ of class $C^2$. For the corresponding fictitious-play iterations, there exists a unique collection $\{V^{i,m}
\}_{i\in\I,\; m\ge1}
\subset C^{1,2}(\bar Q_T)$
such that, for all $i\in\I$, $m\ge1$, and $(t,\x)\in Q^0_T$, it holds
\begin{align}\label{eq:HJB-best-response}
\frac{\partial V^{i,m}}{\partial t}(t,\x)
+ \mathcal{L}_tV^{i,m}(t,\x)
+
H^{i,m}
\big(
t,\x,
\mathrm D_{\x}V^{i,m}(t,\x),
\bzeta^{-i,m-1}(t,\x)
\big)
=0,
\end{align}
and for all $(t,\x)\in\partial Q_T$,
\[
V^{i,m}(t,\x)=g^i(t,\x).
\]
Here, for $(t,\x,p,\boldsymbol z^{-i})\in Q_T\times\R^n\times\R^{n\times (N-1)}$, the Hamiltonian is given by
\[
H^{i,m}(t,\x,p,\boldsymbol z^{-i})
=
\big\langle
\bb^{i,m}
\big(
t,\x,
\bSigma^\top(t,\x)p,
\boldsymbol z^{-i}
\big),
p
\big\rangle
+
\ell^{i,m}
\big(
t,\x,
\bSigma^\top(t,\x)p,
\boldsymbol z^{-i}
\big).
\]
Moreover, for $m\ge1$, we set
\[
\zeta^{i,m}(t,\x)
=
\bSigma^\top(t,\x)\mathrm D_{\x}V^{i,m}(t,\x).
\]
Finally, there exists $\MFP < \infty$ such that
\begin{align}\label{eq:grad_bound}
\sup_{m\in\N}\max_{i\in\I}\sup_{(t,\x)\in Q_T}
\big\|
  \mathrm D_{\x} V^{i,m}(t,\x)
\big\|_{\R^n}^2
\leq
\MFP.
\end{align}

\item \label{assump:smalltime} \textbf{Smallness condition:} The domain is $\D=\R^n$ and there exist $\beta > 0$ and $\eta > 0$ such that
\begin{align*}
    2C_{\beta} < 1, 
    \qquad
    \frac{2C_\beta}
         {1 - 2C_\beta}(N-1) < 1,
    \qquad
    (1+\eta)L_\kappa^a < 1, 
\end{align*}
and
\begin{align*}
    \big(1 + \eta^{-1}\big) L_\kappa^a L_{\kappa}^{\FP} \frac{2C_\beta' N}{1-2C_\beta} < \Big( 1 - \frac{2C_\beta}
         {1 - 2C_\beta}(N-1) \Big) \big(1 - (1+\eta)L_\kappa^a \big),
\end{align*}
where
\begin{align*}
    C_\beta
    &\defeq\frac{4L_\ell^{\FP}}\beta
           +Ce^{\beta T}
           \bigg(L_g+\frac{2L_\ell^{\FP} T(1+2L_\zeta^{\FP} N)}\beta
           +
           L_\zeta^{\FP} T\bigg),
    \\
    C_\beta'
    &\defeq
    \frac{2 L_{\bar{\boldsymbol{f}}} (1+L_\kappa)}{\beta}
           +Ce^{\beta T}
           \bigg(L_g+\frac{2L_\ell^{\FP} T(1+2L_\zeta^{\FP} N)}\beta
           +
           L_\zeta^{\FP} T\bigg).
\end{align*}
The constant $C$ is derived in Lemma~\ref{lemma:delta_x_bound} and includes the non-explicit constant of the Burkholder--Davis--Gundy inequality. Furthermore, the Lipschitz condition (L) holds globally.

\end{enumerate}

Under Assumption~\ref{assump:gradient}, the gradient bound~\eqref{eq:grad_bound} yields a uniform bound on the relevant $\z$-argument, and hence the local version of the Lipschitz condition~(L) holds. Under Assumption~\ref{assump:smalltime}, no such bound is available, so the global version of~(L) is imposed directly. A notable case in which the global version can be verified from the preceding Lipschitz assumptions is additive time-independent noise. Appendix~\ref{sec:lipschitz} contains the detailed verification of the Lipschitz condition~(L).

\begin{remark}\label{remark:gradient_bound}
Assumptions \eqref{eq:lipschitz_zetaim}, \eqref{eq:LFP} and~\eqref{eq:grad_bound} are abstract assumptions that need to be verified in specific settings. For $\D=\R^n$, the gradient estimate~\eqref{eq:grad_bound} holds under strong regularity assumptions, for instance by~\cite[Theorem~4.3]{fleming2006controlled}, whereas~\cite{chaintron2023existence} obtains such estimates under substantially weaker assumptions, namely Lipschitz continuity and linear growth of the coefficients.

For $|\D|<\infty$, however, we are not aware of a directly applicable result in the literature for the semilinear fictitious-play HJB system considered here. The parabolic Hölder-space theory for linear equations (see, e.g.,~\cite[Theorem 10.3.3]{krylov1996lectures}) provides existence, uniqueness, and a priori estimates, which in particular yield bounds on spatial derivatives under appropriate assumptions. Extending such estimates to the present semilinear setting would require an additional argument, most naturally based on a fixed-point approach combined with Schauder estimates, for instance via Banach’s fixed point theorem, Schauder’s fixed point theorem, or the Leray--Schauder fixed point theorem.

\end{remark}

\subsection{Convergence analysis}\label{sec:conv_FP}

The goal of this subsection is to establish the main convergence result for the fictitious-play scheme. To quantify the error at iteration $m$, we define
\begin{align}
    \Psi^{m} \defeq  \max_{i\in\I}\big\|\delta\X^{i,m}\big\|^2_{\mathbb S^{2,n}_{T}}
            +&
            \sum_{i\in\I}\big\|\delta Y^{i,m}\big\|^2_{\mathbb S^{2,1}_{T}}
            +
            \sum_{i\in\I}\big\|\zeta^{i,[\tau]}(\cdot,\X_\cdot)-\zeta^{i,m,[\tau^{i,m}]}\big(\cdot,\X_\cdot^{i,m}\big)\big\|_{\mathbb H^{2,n}_{T}}^2. \label{eq:Psim}
\end{align}
Our main theorem shows that, under the assumptions of Section~\ref{sec:setting3}, this error decays geometrically as $m \to \infty$. More precisely, we prove an estimate of the form
\begin{align*}
    \Psi^{m} \lesssim CK^m, 
\end{align*}
for $C < \infty$ and $K \in (0,1)$. Under Assumption~\ref{assump:gradient}, in special cases where the policy-mismatch error vanishes, the estimate can be sharpened to a super-exponential rate. This refinement is discussed after the main theorem.

The proof is divided into three steps. We first derive a decomposition of the error $\Psi^{m}$ into stopping-time and Markov-map contributions. Then, we bound the stopping-time error in terms of the Markov-map error, and finally estimate the Markov-map error itself.  These results are combined in the last part, where we state and prove the main theorem. We begin by introducing the notation used throughout the convergence analysis.

\subsubsection{Notation}

For $i \in \I$, $m \geq 1$ and $t \in [0,T]$, define the coefficient differences by
\begin{align}
    \delta \bb^{i,m}_t&\defeq\bb\big(t,\X_t,\zeta^i(t,\X_t),\bzeta^{-i}(t,\X_t)\big)-\bb^{i,m}\big(t,\X_t^{i,m},\zeta^{i,m}\big(t,\X_t^{i,m}\big),\bzeta^{-i,m-1}\big(t,\X_t^{i,m}\big)\big),  \label{eq:definition_deltabi} \\
        \delta \bSigma_t^{i,m}
        &\defeq
        \bSigma(t,\X_t) - \bSigma\big(t,\X_t^{i,m}\big), \label{eq:definition_deltasigmai} \\
      \delta \ell_t^{i,m}
      &\defeq
      f^i\big(t,\X_t,\zeta^i(t,\X_t),\bzeta^{-i}(t,\X_t)\big)- \ell^{i,m}\big(t,\X_t^{i,m},\zeta^{i,m}\big(t,\X_t^{i,m}\big),\bzeta^{-i,m-1}\big(t,\X_t^{i,m}\big)\big),\\
\delta g^{i,m}
&\defeq
g^i(\tau,\X_\tau)- g^i\big(\tau^{i,m},\X_{\tau^{i,m}}^{i,m}\big).
    \end{align}   
The Markov maps require a more delicate analysis, for which we need to distinguish between the errors in the Markov maps themselves and errors caused by evaluating the same map along different state processes. For $i,j\in\I$, $m,k\in\N$ and \(t\in[0,T]\) we therefore define 
\begin{align*}
\delta \zeta_t^{j,i,m,k} 
&\defeq
\zeta^{j}(t,\X_t)-\zeta^{j,m}\big(t,\X_t^{i,k}\big),
&\qquad
\delta\bzeta_t^{-i,m,k} 
&\defeq \bzeta^{-i}(t,\X_t)-\bzeta^{-i,m}\big(t,\X_t^{i,k}\big),\\
\delta \bar{\zeta}_t^{i,m} 
&\defeq \zeta^{i}(t,\X_t)-\zeta^{i,m}(t,\X_t),
&\qquad
\delta \bar{\bzeta}_t^{-i,m} 
&\defeq \bzeta^{-i}(t,\X_t)-\bzeta^{-i,m}(t,\X_t),\\
\delta \tilde{\zeta}_t^{j,i,m,k} 
&\defeq \zeta^{j,m}(t,\X_t)-\zeta^{j,m}\big(t,\X_t^{i,k}\big),
&\qquad
\delta \tilde{\bzeta}_t^{-i,m,k} 
&\defeq \bzeta^{-i,m}(t,\X_t)-\bzeta^{-i,m}\big(t,\X_t^{i,k}\big).
\end{align*}
Here, the bar notation denotes the difference between the true Markov map and its current approximation evaluated at the same state process, whereas the tilde notation captures the effect of evaluating the same approximation at different state processes. In particular, we have the decomposition
\begin{align}
\label{eq:split}
\delta\zeta_t^{j,i,m,k} = \delta\bar{\zeta}_t^{j,m}+\delta\tilde{\zeta}_t^{j,i,m,k}.
\end{align}
We also use the shorthand notation
\begin{align*}
    \delta\zeta^{i,m,k}_t \defeq \delta\zeta^{i,i,m,k}_t, \quad
    \delta\zeta^{j,i,m}_t \defeq \delta\zeta^{j,i,m,m}_t, \quad
    \delta\zeta^{i,m}_t \defeq \delta\zeta^{i,i,m,m}_t,
\end{align*}
and analogously for the tilde notation. Furthermore, for the stopping times, we write
\begin{align*}
    \delta \tau^{i,m} \defeq \tau - \tau^{i,m}, \quad \tau_{\min}^{i,m} \defeq \tau\wedge\tau^{i,m}, \quad \tau_{\max}^{i,m} \defeq \tau\vee\tau^{i,m}.
\end{align*}
Finally, we define two error quantities that are central in the analysis. For $i \in \I$ and $m \geq 0$, denote the player-specific policy mismatch by
\begin{align*}
    \Delta^{i,m}(t,\x) 
    := 
    \alpha^{*,i}(t,\x) 
    - 
    \kappa^{i,m}
    \big(
        t, \x, \zeta^i(t,\x)
    \big),
\end{align*}
and let the policy error be quantified by
\begin{align*}
    E_\Delta^m := \sum_{i \in \I}
    \big\|
        \Delta^{i,m}(\cdot, \X_{\cdot})
    \big\|^2_{\mathbb{H}^{2,d}_T}.
\end{align*}
For the global Markov-map error, we introduce
\begin{align*}
    E_\zeta^m \defeq \sum_{i \in \I} \big\| \delta \bar\zeta^{i,m} \big\|^2_{\mathbb{H}^{2,n}_T}.
\end{align*}

\subsubsection{State error decomposition}

The purpose of this subsection is to estimate the different terms in the error quantity $\Psi^{m}$ defined in~\eqref{eq:Psim}. The main goal is to express the approximation errors in $\X$, $Y$ and $\zeta$ in terms of the stopping-time error $\delta \tau$, the Markov-map error $\delta \bar \zeta$ and the policy-mismatch error.

First, Lemma~\ref{lemma:delta_zeta_decomposition} provides a decomposition of $\delta \zeta$ that will be used repeatedly throughout the analysis. Second, Lemma~\ref{lem:delta_bi_bound} derives bounds for the coefficient differences $\delta \bb^{i,m}_t$ and $\delta \ell^{i,m}_t$. Lemma~\ref{lemma:delta_x_bound} then estimates the forward error $\delta \X$, while Lemma~\ref{lemma:delta_y_bound} controls the backward error $\delta Y$ together with the associated $\zeta$-term. Combining these estimates yields Proposition~\ref{prop:deltaXYZ}, which gives the desired decomposition of the overall error $\Psi^{m}$.

\begin{lemma}\label{lemma:delta_zeta_decomposition}
For any $t\in[0,T]$, $i,j\in\I$, \(\beta\geq 0\), and $m,l\in\N$, it holds
\[
\Big\|\delta\zeta^{j,i,l,m,\,[\tau_{\min}^{i,m}]}\Big\|_{\mathbb H^{2,n}_{\beta,t}}^{2}
\;\le\;
2\,\Big\|\delta\bar\zeta^{j,l,\,[\tau_{\min}^{i,m}]}\Big\|_{\mathbb H^{2,n}_{\beta,t}}^{2}
\;+\;
2\,L_\zeta^{\FP}\,
\Big\|\delta \X^{i,m,\,[\tau_{\min}^{i,m}]}\Big\|_{\mathbb H^{2,n}_{\beta,t}}^{2}.
\]
\end{lemma}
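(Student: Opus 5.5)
The plan is to use the splitting \eqref{eq:split} pointwise in $(\omega,t)$ and then integrate. For fixed $t$ and $\omega$, with $s\le \tau_{\min}^{i,m}$ (so both processes are in the relevant region and no killing indicator interferes), \eqref{eq:split} gives
\begin{align*}
\delta\zeta_s^{j,i,l,m} = \delta\bar\zeta_s^{j,l} + \delta\tilde\zeta_s^{j,i,l,m},
\qquad
\delta\tilde\zeta_s^{j,i,l,m} = \zeta^{j,l}(s,\X_s)-\zeta^{j,l}\big(s,\X_s^{i,m}\big).
\end{align*}
The elementary inequality $\|a+b\|^2_{\R^n}\le 2\|a\|^2_{\R^n}+2\|b\|^2_{\R^n}$ then bounds the integrand of the left-hand side by twice the sum of the two squared pieces.

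For the tilde term, I apply the Lipschitz assumption \eqref{eq:lipschitz_zetaim} at equal times $t_1=t_2=s$, so the $|t_1-t_2|$ contribution vanishes. Combined with the uniform bound \eqref{eq:LFP}, this gives
\begin{align*}
\big\|\delta\tilde\zeta_s^{j,i,l,m}\big\|^2_{\R^n}\le L_{\zeta^{j,l}}^{\FP}\,\big\|\X_s-\X_s^{i,m}\big\|^2_{\R^n}\le L_\zeta^{\FP}\,\big\|\delta\X_s^{i,m}\big\|^2_{\R^n}.
\end{align*}
The case $l=0$ is trivial, since $\zeta^{j,0}\equiv0$.

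Next I multiply by $\mathbf 1_{[0,\tau_{\min}^{i,m}]}(s)\,\e^{\beta s}$, integrate over $[0,t]$ and take expectations. Since the indicator is idempotent, the stopped versions of each process appear on the right-hand side, which yields the claimed inequality in $\mathbb H^{2,n}_{\beta,t}$.

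The only delicate point is the killing convention: $\zeta^{j}$ is killed at $\tau$ and $\zeta^{j,l}$ at $\tau^{j,l}$. I expect this to be harmless because on $[0,\tau_{\min}^{i,m}]$ the states $\X_s$ and $\X_s^{i,m}$ both lie in $\overline\D$. The Lipschitz bound \eqref{eq:lipschitz_zetaim} is formulated on the unkilled map over $Q_T$, so I will apply it to the unkilled maps. If the indicator for $\tau^{j,l}$ with $j\neq i$ causes trouble, I would note that for $\D=\R^n$ every stopping time equals $T$, and in the bounded case I would interpret $\zeta^{j,l}$ in $\delta\tilde\zeta$ as the unkilled Markov map, consistent with the definitions.
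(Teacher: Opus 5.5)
Your proposal is correct and follows the paper's proof: the split \eqref{eq:split}, the inequality $\|a_1+a_2\|^2\le 2\|a_1\|^2+2\|a_2\|^2$, and the uniform Lipschitz bound for $\zeta^{j,l}$ at equal times, followed by integration against $\mathbf 1_{[0,\tau_{\min}^{i,m}]}\,\e^{\beta s}$; you have simply written out the pointwise-then-integrate step that the paper leaves implicit. Your hedging about the killing convention is unnecessary: \eqref{eq:split} is an exact algebraic identity however the maps are killed, and any killing indicator multiplying the tilde difference can only decrease its norm.
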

\begin{proof}
The result follows from the split~\eqref{eq:split}, the elementary inequality
\begin{align}
    \left\| a_1 + a_2 \right\|^2_{\mathcal{V}} 
    \leq 
        2 \left\| a_1 \right\|^2_{\mathcal{V}} 
    + 
        2 \left\| a_2 \right\|^2_{\mathcal{V}}, 
    \quad a_1, a_2 \in \mathcal{V}, \label{ineq:sumsq}
\end{align}
for any normed vector space $\mathcal{V}$, and the Lipschitz continuity of $\zeta^{j,l}$.
\end{proof}

\begin{lemma}\label{lem:delta_bi_bound}
    Fix $i \in \I$ and $m \geq 1$. Then, there exists $C < \infty$ such that
    \begin{align*}
        &\Big\|
            \delta \bb^{i,m,[\tau^{i,m}_{\min}]}
        \Big\|^2_{\mathbb{H}^{2,n}_T}
        +
        \Big\|
            \delta \ell^{i,m,[\tau^{i,m}_{\min}]}
        \Big\|^2_{\mathbb{H}^{2,1}_T}
        \\
        &\qquad \leq
        C
        \bigg(
            \Big\|
                \delta \X^{i,m,[\tau^{i,m}_{\min}]}
            \Big\|^2_{\mathbb{H}^{2,n}_T}
            +
            \Big\|
                \delta \bar{\zeta}^{i,m,[\tau^{i,m}_{\min}]}
            \Big\|^2_{\mathbb{H}^{2,n}_T}
            +
            \sum_{j\in \I\backslash\{i\}}
            \Big\|
                \delta \bar{\zeta}^{j,m-1,[\tau^{i,m}_{\min}]}
            \Big\|^2_{\mathbb{H}^{2,n}_T}
            +
            E_\Delta^{m-1}
        \bigg).
    \end{align*}
\end{lemma}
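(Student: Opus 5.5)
The plan is to insert an intermediate point: the fictitious-play coefficients evaluated along the true state $\X$, but with the true Markov maps $\zeta^i(\cdot,\X)$ and $\bzeta^{-i}(\cdot,\X)$. Only the drift is treated in detail; the argument for $\delta\ell^{i,m}$ is identical, with $(\bb,\bb^{i,m},\tilde\bb^i)$ replaced by $(\f,\ell^{i,m},\tilde\ell^i)$ and the constants changed accordingly. On $[0,\tau^{i,m}_{\min}]$ neither process is killed, so all coefficients take their unkilled values there. We write
\begin{align*}
\delta\bb^{i,m}_t
&=\Big[\bb\big(t,\X_t,\zeta^i(t,\X_t),\bzeta^{-i}(t,\X_t)\big)-\bb^{i,m}\big(t,\X_t,\zeta^i(t,\X_t),\bzeta^{-i}(t,\X_t)\big)\Big]\\
&\quad+\Big[\bb^{i,m}\big(t,\X_t,\zeta^i(t,\X_t),\bzeta^{-i}(t,\X_t)\big)-\bb^{i,m}\big(t,\X_t^{i,m},\zeta^{i,m}(t,\X_t^{i,m}),\bzeta^{-i,m-1}(t,\X_t^{i,m})\big)\Big]
\end{align*}
and apply \eqref{ineq:sumsq}.

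\textbf{Second bracket.} Apply the Lipschitz condition (L) for $\bb^{i,m}$. In the local case, the gradient bound \eqref{eq:grad_bound} together with the boundedness of $\bSigma$ keeps the $\z$-arguments in a bounded set. The squared bracket is then bounded by
\[
L^{\FP}_b\Big(\|\delta\X^{i,m}_t\|^2+\|\delta\zeta^{i,m}_t\|^2+\sum_{j\neq i}\|\delta\zeta^{j,i,m-1,m}_t\|^2\Big).
\]
Integrate this over $[0,\tau^{i,m}_{\min}]$ and apply Lemma~\ref{lemma:delta_zeta_decomposition} to each $\zeta$-term, with $l=m$ for $j=i$ and $l=m-1$ for $j\ne i$. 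This produces exactly $\|\delta\bar\zeta^{i,m,[\tau^{i,m}_{\min}]}\|^2$, $\sum_{j\ne i}\|\delta\bar\zeta^{j,m-1,[\tau^{i,m}_{\min}]}\|^2$, and multiples $(1+2L^{\FP}_\zeta N)$ of $\|\delta\X^{i,m,[\tau^{i,m}_{\min}]}\|^2$. All Lipschitz constants are uniform in $m$ by \eqref{eq:LFP}.

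\textbf{First bracket (policy mismatch).} Both terms share the arguments $(t,\X_t,\zeta^i(t,\X_t))$ and differ only in the opponents' actions. Write $z^i=\zeta^i(t,\X_t)$ and $p^i=\bPhi(t,\X_t)z^i$. By the definitions of $\bb$, $\bc$ and \eqref{eq:fp2}, the first term equals $\bar\bb(t,\X_t,\bal^*(t,\X_t))$, and $\alpha^{*,i}=\kappa^i(t,\X_t,p^i,\bal^{*,-i})$. By definition, $\bb^{i,m}$ equals $\bar\bb$ evaluated at $\big[\kappa^i(t,\X_t,p^i,\boldsymbol\kappa^{-i,m-1}(t,\X_t,\bzeta^{-i})),\ \boldsymbol\kappa^{-i,m-1}(t,\X_t,\bzeta^{-i})\big]_i$. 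The opponent components of this action are precisely $\kappa^{j,m-1}(t,\X_t,\zeta^j(t,\X_t))$, $j\ne i$. Hence, by \eqref{eq:bhat}, the opponent part of the action difference is $\sum_{j\ne i}\|\Delta^{j,m-1}(t,\X_t)\|^2$. The own-action part is controlled by \eqref{eq:kappai_lipschitz}: the $t$, $\x$ and $p$ arguments coincide, so it is bounded by $L^a_\kappa$ times the same opponent mismatch.

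So the squared first bracket is at most $L_{\bar\bb}(1+L^a_\kappa)\sum_j\|\Delta^{j,m-1}(t,\X_t)\|^2$. Integrating over $[0,\tau^{i,m}_{\min}]\subset[0,T]$ bounds it by $E^{m-1}_\Delta$. For $\ell$ we use $L_{\bar\f}$ in place of $L_{\bar\bb}$.

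\textbf{Main obstacle.} The identification in the first bracket is the delicate step. It has to be confirmed that $\bb$ evaluated at $\zeta^i(t,\X_t)=\bSigma^\top \mathrm D_{\x}V^i$ genuinely reproduces $\bal^*$ through $\bc$, and that $\boldsymbol\kappa^{-i,m-1}$ evaluated at the true $\bzeta^{-i}$ is exactly what enters $\Delta^{j,m-1}$. A second point to check is that restricting to $[0,\tau^{i,m}_{\min}]$ removes the killing indicators, so that no boundary terms appear. Summing the two bracket estimates with a constant $C$ depending on $L_{\bar\bb},L_{\bar\f},L^a_\kappa,L^{\FP}_b,L^{\FP}_\ell,L^{\FP}_\zeta,N$ gives the claim.
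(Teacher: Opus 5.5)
Your proposal is correct and follows the paper's proof essentially step for step. It uses the same intermediate point (fictitious-play coefficients evaluated along $\X$ with the true Markov maps), handles the second bracket with the Lipschitz bound for $\bb^{i,m}$ plus Lemma~\ref{lemma:delta_zeta_decomposition}, and treats the first bracket via the identification $\bar\bb(t,\X_t,\bal^*)-\bar\bb(t,\X_t,\bal^{i,m})$, whose opponent components are the mismatches $\Delta^{j,m-1}$. Your use of $L^a_\kappa$ rather than the paper's $L_\kappa$ for the own-action term is in fact the constant that \eqref{eq:kappai_lipschitz} gives when the $(t,\x,p)$ arguments coincide.
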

\begin{proof}
    Using~\eqref{ineq:sumsq}, we can split the error as
    \begin{align*}
        \Big\|
            \delta \bb^{i,m,[\tau^{i,m}_{\min}]}
        \Big\|^2_{\mathbb{H}^{2,n}_T}
        \leq 
        2\,
        \Big\|
            \delta\bar{\bb}^{i,m,[\tau^{i,m}_{\min}]}
        \Big\|^2_{\mathbb{H}^{2,n}_T}
        +
        2\,
        \Big\|
            \delta \tilde{\bb}^{i,m,[\tau^{i,m}_{\min}]}
        \Big\|^2_{\mathbb{H}^{2,n}_T}
    \end{align*}
    where
    \begin{align*}
        \delta \bar{\bb}^{i,m}_{t} 
        &:=
        \bb
        \big(
            t,\X_t,\zeta^i(t,\X_t),\bzeta^{-i}(t,\X_t)
        \big)
        -
        \bb^{i,m}
        \big(
        t,\X_t,\zeta^{i}\big(t,\X_t\big),\bzeta^{-i}\big(t,\X_t\big)
        \big), \\
        \delta \tilde{\bb}^{i,m}_{t} 
        &:=
        \bb^{i,m}
        \big(
        t,\X_t,\zeta^{i}\big(t,\X_t\big),\bzeta^{-i}\big(t,\X_t\big)
        \big)
        -
        \bb^{i,m}
        \big(
        t,\X_t^{i,m},\zeta^{i,m}\big(t,\X_t^{i,m}\big),\bzeta^{-i,m-1}\big(t,\X_t^{i,m}\big)
        \big).
    \end{align*}
    The bound for $\delta \tilde{\bb}^{i,m}_{t}$ follows immediately from the Lipschitz continuity of $\bb^{i,m}$. More precisely, we get
    \begin{align*}
        \big\|
            \delta \tilde{\bb}^{i,m}_{t}
        \big\|_{\R^n}^2
        \leq
        C
        \bigg(
            \big\|
                \delta \X^{i,m}_t
            \big\|_{\R^n}^2
            +
            \big\|
                \zeta^i(t,\X_t)
                -
                \zeta^{i,m}(t,\X_t^{i,m})
            \big\|_{\R^n}^2
            +
            \sum_{j\in \I\backslash\{i\}}
            \big\|
                \zeta^j(t,\X_t)
                -
                \zeta^{j,m-1}(t,\X_t^{i,m})
            \big\|_{\R^n}^2
        \bigg).
    \end{align*}
    After killing at $\tau^{i,m}_{\min}$, integrating, taking expectation and applying Lemma~\ref{lemma:delta_zeta_decomposition}, this gives
    \begin{align}\label{eq:B_b_bound}
        \Big\|
            \delta\tilde{\bb}^{i,m,[\tau^{i,m}_{\min}]}
        \Big\|^2_{\mathbb{H}^{2,n}_T}
        \leq
        C
        \bigg(
            \Big\|
                \delta \X^{i,m,[\tau^{i,m}_{\min}]}
            \Big\|^2_{\mathbb{H}^{2,n}_T}
            +
            \Big\|
                \delta \bar{\zeta}^{i,m,[\tau^{i,m}_{\min}]}
            \Big\|^2_{\mathbb{H}^{2,n}_T}
            +
            \sum_{j\in \I\backslash\{i\}}
            \Big\|
                \delta \bar{\zeta}^{j,m-1,[\tau^{i,m}_{\min}]}
            \Big\|^2_{\mathbb{H}^{2,n}_T}
        \bigg).
    \end{align}
    For $\delta\bar{\bb}^{i,m}_{t}$, it follows from definitions that
    \begin{align*}
        \delta\bar{\bb}^{i,m}_{t}
        = 
        \bar\bb(t,\X_t,\bal^{*}(t,\X_t))
        -
        \bar\bb(t,\X_t,\bal^{i,m}(t,\X_t)),
    \end{align*}
    where
    \begin{align*}
        \bal^{i,m}(t,\x)
        =
        \big[
            \kappa^i
            \big(
                t,
                \x,
                \bPhi(t,\x)\zeta^i(t,\x),
                \boldsymbol\kappa^{-i,m-1}
                (
                    t,
                    \x,
                    \boldsymbol{\zeta}^{-i}(t,\x)
                )
            \big)
            ,
            \boldsymbol\kappa^{-i,m-1}
            (
                t,
                \x,
                \boldsymbol{\zeta}^{-i}(t,\x)
            )
        \big]_i.
    \end{align*}
    By the Lipschitz continuity of $\bar\bb$, we have
    \begin{align*}
        \big\|
             \delta\bar{\bb}^{i,m}_{t}
        \big\|_{\R^n}^2
        &\leq
        L_{\bar\bb} \sum_{j \in \I}
        \big\|
            \alpha^{*,j}(t,\X_t)
            -
            \alpha^{i,m,j}(t,\X_t)
        \big\|_{\R^d}^2. 
    \end{align*}
    Note that for $j \neq i$ the summation term is $\Delta^{j,m-1}$. For the term $j = i$, we have
    \begin{align*}
        \big\|
            \alpha^{*,j}(t,\X_t)
            &-
            \alpha^{i,m,j}(t,\X_t)
        \big\|_{\R^d}^2
        \\
        &\qquad 
        =
        \big\|
            \alpha^{*,j}(t,\X_t)
            -
            \kappa^i
            \big(
                t,
                \X_t,
                \bPhi(t,\X_t)\zeta^i(t,\X_t),
                \boldsymbol\kappa^{-i,m-1}
                (
                    t,
                    \X_t,
                    \boldsymbol{\zeta}^{-i}(t,\X_t)
                )
            \big)   
        \big\|_{\R^d}^2
        \\
        &\qquad \leq
        L_{\kappa}
        \sum_{j \in \I \backslash\{i\}}
        \big\|
            \alpha^{*,j}(t,\X_t)
            -
            \kappa^{j,m-1}
            (
                t, \X_t, \zeta^{j}(t,\X_t)
            )
        \big\|_{\R^d}^2
        \\
        &\qquad =
        L_{\kappa}
        \sum_{j \in \I \backslash\{i\}}
        \big\|
            \Delta^{j,m-1}(t,\X_t)
        \big\|_{\R^d}^2.
    \end{align*}
    Collecting terms, we get
    \begin{align*}
        \big\|
            \delta\bar{\bb}^{i,m}_{t}
        \big\|_{\R^n}^2
        &\leq
        L_{\bar\bb}(1+L_\kappa) \sum_{j\in\I\backslash\{i\}} \big\| \Delta^{j,m-1} \big\|_{\R^d}^2.
    \end{align*}
    After killing at $\tau^{i,m}_{\min}$, integrating and taking expectation, we have
    \begin{align*}
        \Big\|
            \delta\bar{\bb}^{i,m,[\tau^{i,m}_{\min}]}
        \Big\|^2_{\mathbb{H}^{2,n}_T}
        \leq
        L_{\bar\bb} (1 + L_{\kappa})
        \sum_{j \in \I\backslash\{i\}} 
        \Big\|
            \Delta^{j,m-1}(\cdot,\X_{\cdot})^{[\tau^{i,m}_{\min}]}
        \Big\|^2_{\mathbb{H}^{2,d}_T}
        \leq
         L_{\bar\bb}(1 + L_{\kappa}) E_{\Delta}^{m-1}.
    \end{align*}
    Next, we estimate the $\delta \ell^{i,m}$ part of the lemma. The error can similarly be split as
    \begin{align*}
        \Big\|
            \delta \ell^{i,m,[\tau^{i,m}_{\min}]}
        \Big\|^2_{\mathbb{H}^{2,1}_T}
        \leq 
        2\,
        \Big\|
            \delta \bar{\ell}^{i,m,[\tau^{i,m}_{\min}]}
        \Big\|^2_{\mathbb{H}^{2,1}_T}
        +
        2\,
        \Big\|
           \delta \tilde{\ell}^{i,m,[\tau^{i,m}_{\min}]}
        \Big\|^2_{\mathbb{H}^{2,1}_T}
    \end{align*}
    where
    \begin{align*}
        \delta \bar{\ell}^{i,m}_{t} 
        &:=
        f^i
        \big(
            t,\X_t,\zeta^i(t,\X_t),\bzeta^{-i}(t,\X_t)
        \big)
        -
        \ell^{i,m}
        \big(
        t,\X_t,\zeta^{i}\big(t,\X_t\big),\bzeta^{-i}\big(t,\X_t\big)
        \big), \\
        \delta\tilde{\ell}^{i,m}_{t} 
        &:=
        \ell^{i,m}
        \big(
        t,\X_t,\zeta^{i}\big(t,\X_t\big),\bzeta^{-i}\big(t,\X_t\big)
        \big)
        -
        \ell^{i,m}
        \big(
        t,\X_t^{i,m},\zeta^{i,m}\big(t,\X_t^{i,m}\big),\bzeta^{-i,m-1}\big(t,\X_t^{i,m}\big)
        \big).
    \end{align*}
    For $\delta\tilde{\ell}^{i,m}_{t}$, we use the Lipschitz continuity of $\ell^{i,m}$, and repeat the computations used for $\delta\tilde{\bb}^{i,m}_{t} $ to get a bound on the same form as~\eqref{eq:B_b_bound}. For $\delta \bar{\ell}^{i,m}_{t}$, we have from definitions
    \begin{align*}
        \delta \bar{\ell}^{i,m}_{t}
        =
        \bar f^i
        \big(
            t,\X_t,\bal^{*}(t,\X_t)
        \big)
        -
        \bar f^i
        \big(
            t,\X_t,\bal^{i,m}(t,\X_t)
        \big).
    \end{align*}
    Thus, repeating the computations used for $\delta \bar{\bb}^{i,m}_{t}$, we arrive at the bound
    \begin{align*}
        \Big\|
            \delta \bar{\ell}^{i,m,[\tau^{i,m}_{\min}]}
        \Big\|^2_{\mathbb{H}^{2,1}_T}
        \leq
        L_{\bar{\boldsymbol{f}}}(1 + L_{\kappa})E_\Delta^{m-1},
    \end{align*}
    where $L_{\bar{\boldsymbol{f}}}$ comes from using the Lipschitz continuity of $\bar{\boldsymbol{f}}$ instead of $\bar\bb$. This completes the proof.
\end{proof}

\begin{lemma} \label{lemma:delta_x_bound}
There exists $C>0$ such that for all $i\in\I$ and \(m\geq1\) it holds
    \begin{align}
    \Big\|\delta \X^{i,m,[\tau_{\min}^{i,m}]}\Big\|_{\mathbb S^{2,n}_{T}}^2
\leq
C\bigg(
    \Big\| \delta \bar\zeta^{i,m,[\tau_{\min}^{i,m}]}\Big\|_{\mathbb{H}^{2,n}_{T}}^2
    +
    \sum_{j\in\I\setminus\{i\}}\Big\| \delta \bar\zeta^{j,m-1,[\tau_{\min}^{i,m}]}\Big\|_{\mathbb{H}^{2,n}_{T}}^2 + E_\Delta^{m-1}
\bigg). \label{eq:delta_x_bound_1}
    \end{align}
Moreover, for all \(\varepsilon\in(0,1)\) there exists $C>0$ such that for all $i\in\I$ and \(m\geq1\) it holds
    \begin{align}
\big\|\delta \X^{i,m}\big\|_{\mathbb S^{2,n}_{T}}^2
\leq
C\bigg(\big\|\delta \tau^{i,m}\big\|_{\mathbb{L}^{1,1}}^{1-\varepsilon}
+
    \Big\| \delta\bar \zeta^{i,m,[\tau_{\min}^{i,m}]}\Big\|_{\mathbb{H}^{2,n}_{T}}^2
    +
    \sum_{j\in\I\setminus\{i\}}\Big\| \delta \bar\zeta^{j,m-1,[\tau_{\min}^{i,m}]}\Big\|_{\mathbb{H}^{2,n}_{T}}^2 + E_\Delta^{m-1}
\bigg). \label{eq:delta_x_bound_2}
    \end{align}
    
\end{lemma}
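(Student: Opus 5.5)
For \eqref{eq:delta_x_bound_1}, I will subtract the forward equation in \eqref{eq:FP_FBSDE} from the one in \eqref{eq:FBSDE_full}. Up to time $t\wedge\tau_{\min}^{i,m}$ both processes are still running, so
\begin{align*}
\delta\X^{i,m}_{t\wedge\tau_{\min}^{i,m}}
=\int_0^{t\wedge\tau_{\min}^{i,m}}\delta\bb^{i,m}_s\,\d s+\int_0^{t\wedge\tau_{\min}^{i,m}}\delta\bSigma^{i,m}_s\,\d\W_s .
\end{align*}
Taking $\sup_{s\le t}$ and second moments, I will bound the drift term by Cauchy--Schwarz, which gives $t\,\|\delta\bb^{i,m,[\tau^{i,m}_{\min}]}\|^2_{\mathbb H^{2,n}_t}$. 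The stochastic integral will be handled by the BDG inequality \eqref{eq:BDG1} with $p=2$ and the Lipschitz bound \eqref{eq:lipschitz_sigma}, giving $K_2^2L_{\bSigma}\|\delta\X^{i,m,[\tau^{i,m}_{\min}]}\|^2_{\mathbb H^{2,n}_t}$. Next I will insert Lemma~\ref{lem:delta_bi_bound}, applied on $[0,t]$ rather than $[0,T]$; its proof is pointwise in time, so it carries over unchanged. This yields
\begin{align*}
\phi(t)\defeq\Big\|\delta\X^{i,m,[\tau^{i,m}_{\min}]}\Big\|^2_{\mathbb S^{2,n}_t}
\le C\int_0^t\phi(s)\,\d s + C\big(\text{the $\delta\bar\zeta$ terms}+E^{m-1}_\Delta\big),
\end{align*}
where I have used $\|\cdot\|^2_{\mathbb H^{2,n}_t}\le\int_0^t\|\cdot\|^2_{\mathbb S^{2,n}_s}\,\d s$. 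Gronwall's lemma then gives \eqref{eq:delta_x_bound_1}, since all constants are uniform in $i$ and $m$ by (L) and \eqref{eq:LFP}. Alternatively, the weighted norms with a large $\beta$ together with \eqref{eq:HHH} would absorb the term without Gronwall.

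For \eqref{eq:delta_x_bound_2}, I will split $\delta\X^{i,m}=\delta\X^{i,m,[\tau^{i,m}_{\min}]}+\delta\X^{i,m,[\tau^{i,m}_{\min},T]}$. The first piece is controlled by \eqref{eq:S_norm_split} and \eqref{eq:delta_x_bound_1}. On $[\tau^{i,m}_{\min},T]$ one process is frozen while the other evolves on $[\tau^{i,m}_{\min},\tau^{i,m}_{\max}]$. Hence
\begin{align*}
\sup_{t\ge\tau^{i,m}_{\min}}\big\|\delta\X^{i,m}_t\big\|\le\big\|\delta\X^{i,m}_{\tau^{i,m}_{\min}}\big\|+\sup_{t\in[\tau^{i,m}_{\min},\tau^{i,m}_{\max}]}\big\|\X^{\sharp}_t-\X^{\sharp}_{\tau^{i,m}_{\min}}\big\|,
\end{align*}
where $\X^\sharp$ denotes whichever process is still running. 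The first term is again handled by \eqref{eq:delta_x_bound_1}.

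The main obstacle is bounding the increment term by a power of $\|\delta\tau^{i,m}\|_{\mathbb L^{1,1}}$; note that $|\delta\tau^{i,m}|=\tau^{i,m}_{\max}-\tau^{i,m}_{\min}$. For the drift part of the increment, I will use linear growth of $\bb$ and $\bb^{i,m}$ (available globally under \ref{assump:smalltime}, and from boundedness under \ref{assump:gradient}) together with $\X\in\mathbb S^{p,n}_T$ for all $p$. For the martingale part, I will use BDG on the random interval, giving $\E\big[\sup|\int_{\tau_{\min}}^{\cdot}\bSigma\,\d\W|^2\big]\lesssim\E\big[\int_{\tau_{\min}}^{\tau_{\max}}\tnorm{\bSigma}^2\,\d s\big]$. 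I will then apply Hölder's inequality with exponents $(1/(1-\varepsilon),1/\varepsilon)$:
\begin{align*}
\E\bigg[|\delta\tau^{i,m}|\sup_{s}\big(1+\|\X^{\sharp}_s\|^2\big)\bigg]\le\|\delta\tau^{i,m}\|_{\mathbb L^{1/(1-\varepsilon),1}}\,\Big\|\sup_s\big(1+\|\X^\sharp_s\|^2\big)\Big\|_{\mathbb L^{1/\varepsilon,1}}.
\end{align*}
Because $|\delta\tau^{i,m}|\le T$, the first factor is bounded by $T^{\varepsilon}\|\delta\tau^{i,m}\|^{1-\varepsilon}_{\mathbb L^{1,1}}$. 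The second factor is finite uniformly in $m$ by moment bounds for $\X^{i,m}$. I expect this uniform moment bound, which rests on the uniform Lipschitz/growth constants and needs a short separate justification, to be the delicate point. The constant $C$ blows up as $\varepsilon\to0$, which explains the $\varepsilon$-dependence in the statement.
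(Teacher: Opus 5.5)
Your proposal is correct and follows essentially the same route as the paper. For the error killed at $\tau^{i,m}_{\min}$ you both use Cauchy--Schwarz, BDG with the Lipschitz bound on $\bSigma$, Lemma~\ref{lem:delta_bi_bound} and Gronwall; for the second estimate you both split at $\tau^{i,m}_{\min}$ and apply H\"older with exponents $(1-\varepsilon)^{-1},\varepsilon^{-1}$ to the increment over $[\tau^{i,m}_{\min},\tau^{i,m}_{\max}]$.

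The only difference is cosmetic. The paper applies H\"older on $\Omega\times[0,T]$ directly to $\delta\bb^{i,m}$ and $\delta\bSigma^{i,m}$, whereas you first bound pathwise by $|\delta\tau^{i,m}|\sup_s(1+\|\X^\sharp_s\|^2)$. In both versions the uniform-in-$m$ moment bound you flag holds because $\bar\bb$ has linear growth in $\x$ uniformly over the compact action set $A$, and that is how the paper dispatches it.
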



\begin{proof}

Let $i \in \I$ and $m \geq 1$ be fixed for the remainder of the proof. For any $s \in [0,T]$, we have $\mathbb P$-almost surely
\begin{align*}
        \delta\X^{i,m,[\tau_{\min}^{i,m}]}_s
    &=
        \int_0^s \delta \bb^{i,m,[\tau_{\min}^{i,m}]}_r\,\d r
    +
        \int_0^s \delta \bSigma^{i,m,[\tau_{\min}^{i,m}]}_r\,\d \W_r.
\end{align*}
Taking the squared $\bbR^n$-norm of both sides, applying~\eqref{ineq:sumsq} followed by the Cauchy--Schwarz inequality for the first term, we obtain 
\begin{align*}
\Big\|\delta \X^{i,m,[\tau_{\min}^{i,m}]}_s\Big\|_{\R^n}^2
\leq\;&
2s\int_0^s\Big\|\delta\bb^{i,m,[\tau_{\min}^{i,m}]}_r
\Big\|_{\R^n}^2\,\d r
+
2\, \bigg\|\int_0^s\delta \bSigma^{i,m,[\tau_{\min}^{i,m}]}_r\,\d \W_r\bigg\|_{\R^n}^2
.
\end{align*}
Next, take the supremum over $s \in [0,t]$, first on the right-hand side and then on the left-hand side, and take expectations on both sides. Recalling the definitions of $\mathbb S^{2,n}_{t}$- and $\mathbb H^{2,n}_{t}$-norms, we have
\begin{align}\label{eq:2_terms}\begin{split}
\Big\|\delta \X^{i,m,[\tau_{\min}^{i,m}]}\Big\|_{\mathbb S^{2,n}_{t}}^2
\lesssim\;&
\Big\|\delta\bb^{i,m,[\tau_{\min}^{i,m}]}\Big\|_{\mathbb H^{2,n}_{t}}^2
+
\bigg\|\int_0^\cdot \delta \bSigma^{i,m,[\tau_{\min}^{i,m}]}_r\,\d \W_r\bigg\|_{\mathbb{S}_{t}^{2,n}}^2
.
\end{split}
\end{align} 
We bound the two terms on the right-hand side of \eqref{eq:2_terms} separately. For the first term, we use the bound from Lemma~\ref{lem:delta_bi_bound}. For the second term of~\eqref{eq:2_terms}, we apply the Burkholder–Davis–Gundy inequality \eqref{eq:BDG1}, use the definition of $\delta \bSigma^{i,m}$ in~\eqref{eq:definition_deltasigmai} together with the Lipschitz continuity of \(\bSigma\) from~\eqref{eq:lipschitz_sigma}, to get
\begin{align}
    \bigg\|\int_0^\cdot\delta \bSigma^{i,m,[\tau_{\min}^{i,m}]}_r\,\d \W_r\bigg\|_{\mathbb{S}_{t}^{2,n}}^2
    \lesssim
   \Big\|\delta \bSigma^{i,m,[\tau_{\min}^{i,m}]}\Big\|_{\mathbb H^{2,n,n}_{t}}^2
   \lesssim
    \Big\| \delta \X^{i,m,[\tau_{\min}^{i,m}]}\Big\|_{\mathbb{H}^{2,n}_{t}}^2
    \leq
    \int_0^t\Big\|\delta \X^{i,m,[\tau_{\min}^{i,m}]}\Big\|^2_{\mathbb S^{2,n}_{s}}\, \d s, \label{eq:second_bound}
\end{align}
where the last inequality follows from the definitions of the $\mathbb{H}^{2,n}_{t}$- and $\mathbb S^{2,n}_{s}$-norms. In total, we have
\begin{align*}
\Big\|\delta \X^{i,m,[\tau_{\min}^{i,m}]}\Big\|_{\mathbb S^{2,n}_{t}}^2
    &\lesssim
    \Big\|\delta \bar{\zeta}^{i,m,[\tau_{\min}^{i,m}]}\Big\|_{\mathbb{H}^{2,n}_{t}}^2
    +
    \sum_{j\in\I\setminus\{i\}}
\Big\|\delta \bar{\zeta}^{j,m-1,[\tau_{\min}^{i,m}]}\Big\|_{\mathbb{H}^{2,n}_{t}}^2
+
\int_0^t\Big\|\delta \X^{i,m,[\tau_{\min}^{i,m}]}\Big\|^2_{\mathbb S^{2,n}_{s}}\, \d s + E_\Delta^{m-1}.
\end{align*}
Since this inequality holds for all $t \in [0,T]$, Gronwall's lemma yields
\begin{align*}
\Big\|\delta \X^{i,m,[\tau_\text{min}^{i,m}]}\Big\|_{\mathbb S^{2,n}_{T}}^2
\lesssim
    \Big\|\delta \bar{\zeta}^{i,m,[\tau_\text{min}^{i,m}]}\Big\|_{\mathbb{H}^{2,n}_{T}}^2
    +
    \sum_{j\in\I\setminus\{i\}}\Big\|\delta \bar{\zeta}^{j,m-1,[\tau_\text{min}^{i,m}]}\Big\|_{\mathbb{H}^{2,n}_{T}}^2 + E_\Delta^{m-1}.
\end{align*}
This proves the first claim of the lemma. 

For the second claim, we first use~\eqref{ineq:sumsq} as
\begin{align}\label{eq:delta_S_S2}
\begin{split}
\big\|\delta \X^{i,m}\big\|_{\mathbb S^{2,n}_{T}}^2
&\le
2\Big\|\delta\X^{i,m,[\tau_{\min}^{i,m}]}\Big\|_{\mathbb S^{2,n}_{T}}^2
+2\Big\|\delta\tilde\X^{i,m,[\tau_{\min}^{i,m},\tau_{\max}^{i,m}]}\Big\|_{\mathbb S^{2,n}_{T}}^2,
\end{split}
\end{align}
where the tilde-notation denotes the increment of the state-error dynamics over $[\tau^{i,m}_{\min}, \tau^{i,m}_{\max}]$ initialized at zero at $\tau^{i,m}_{\min}$, that is
\begin{align*}
    \delta\tilde\X^{i,m,[\tau_{\min}^{i,m},\tau_{\max}^{i,m}]}_s
    \defeq
    \int_{0}^{s}
        \delta \bb^{i,m,[\tau_{\min}^{i,m},\tau_{\max}^{i,m}]}_r
    \, \d r
    +
    \int_{0}^{s}
        \delta \bSigma^{i,m,[\tau_{\min}^{i,m},\tau_{\max}^{i,m}]}_r
    \, \d \W_r.
\end{align*}
Note that the first term in~\eqref{eq:delta_S_S2} is already bounded from the first part of the lemma. For the second term, similarly to \eqref{eq:2_terms}, we have 
\begin{align}
\Big\|\delta \tilde\X^{i,m,[\tau_{\min}^{i,m},\tau_{\max}^{i,m}]}\Big\|_{\mathbb S^{2,n}_{T}}^2
\lesssim\;&
\Big\|\delta\bb^{i,m,[\tau_{\min}^{i,m},\tau_{\max}^{i,m}]}\Big\|_{\mathbb H^{2,n}_{T}}^2
+
\bigg\|\int_0^\cdot\delta \bSigma^{i,m,[\tau_{\min}^{i,m},\tau_{\max}^{i,m}]}_r\,\d \W_r\bigg\|_{\mathbb{S}_{T}^{2,n}}^2
. \label{eq:2_terms_again}
\end{align} 
Again, we bound the two terms on the right-hand side separately. For the first term, taking $\varepsilon\in(0,1)$, applying Hölder’s inequality with $p=(1-\varepsilon)^{-1}$ and $q=\varepsilon^{-1}$ leading to $p^{-1}+q^{-1}=1$ yields
\begin{align}
\begin{split}
    \Big\|\delta \bb^{i,m,[\tau_{\min}^{i,m},\tau_{\max}^{i,m}]}\Big\|_{\mathbb H^{2,n}_{T}}^2
&= \mathbb E\!\int_0^T \mathbf 1_{\{\tau_{\min}^{i,m}< s\le \tau_{\max}^{i,m}\}}
\Big\|\delta \bb^{i,m,[\tau_{\min}^{i,m},\tau_{\max}^{i,m}]}_s\Big\|_{\R^n}^2\,\d s \\
&\leq  \bigg(\mathbb E\!\int_0^T \mathbf 1_{\{\tau_{\min}^{i,m}< s\le \tau_{\max}^{i,m}\}}\,\d s\bigg)^{\frac1p}
\bigg(\E\int_0^T\Big\|\delta \bb^{i,m,[\tau_{\min}^{i,m},\tau_{\max}^{i,m}]}_s\Big\|_{\R^n}^{2q}\,\d s\bigg)^{\frac1q}\\
&\leq \|\delta\tau^{i,m}\|_{\mathbb L^{1,1}}^{1-\varepsilon}
\bigg(\E\int_0^T\Big\|\delta \bb_s^{i,m,[\tau_{\min}^{i,m},\tau_{\max}^{i,m}]}\Big\|_{\R^n}^{\frac2\varepsilon}\, \d s\bigg)^{\varepsilon}. \label{eq:second_counterpart}
\end{split}
\end{align}
For the second term, the Burkholder--Davis--Gundy inequality~\eqref{eq:BDG1} and the H\"older inequality gives
\begin{align}
\begin{split}
    \bigg\|\int_0^{\cdot} \delta \bSigma^{i,m,[\tau_{\min}^{i,m},\tau_{\max}^{i,m}]}_r\,\d \W_r\bigg\|_{\mathbb{S}_{T}^{2,n}}^2
&\lesssim\; 
\Big\|\delta \bSigma^{i,m,[\tau_{\min}^{i,m},\tau_{\max}^{i,m}]}\Big\|_{\mathbb H^{2,n,n}_{T}}^2\\
&\leq\big\|\delta \tau^{i,m}\big\|_{\mathbb{L}^{1,1}}^{1-\varepsilon}\bigg(\E\int_0^T\Big\|\delta \bSigma_t^{i,m,[\tau_{\min}^{i,m},\tau_{\max}^{i,m}]}\Big\|_{\R^n}^{\frac2\varepsilon}\, \d t\bigg)^{\varepsilon}.
\end{split} \label{eq:tmptmptmp}
\end{align}
Insert~\eqref{eq:second_counterpart} and~\eqref{eq:tmptmptmp} into~\eqref{eq:2_terms_again}, and note that by the Lipschitz continuity of $\bb, \bb^{i,m}$, $\bSigma$, and the Markov maps, the compactness of $A$, and the finite moments of $\X$ and $\X^{i,m}$, it follows that
\begin{align*}
\sup_{m\geq 1}\,
\E\bigg[\int_0^T\Big\|\delta \bb_t^{i,m,[\tau_{\min}^{i,m},\tau_{\max}^{i,m}]}\Big\|_{\R^n}^{\frac2\varepsilon}\, \d t\bigg]<\infty
\quad
\textrm{and}
\quad
\sup_{m\geq 1}\,
\E\bigg[\int_0^T\Big\|\delta \bSigma_t^{i,m,[\tau_{\min}^{i,m},\tau_{\max}^{i,m}]}\Big\|_{\R^n}^{\frac2\varepsilon}\, \d t\bigg]<\infty.
\end{align*}
This concludes the proof.
\end{proof}

\begin{lemma}\label{lemma:delta_y_bound}
For all \(\varepsilon\in(0,1)\) there exists $C>0$ such that for all $i\in\I$ and \(m\geq1\) we have
\begin{align*}      
    \sum_{i\in \I}\big\|\delta Y^{i,m}\big\|_{\mathbb S^{2,1}_{T}}^2
    &\lesssim
    E_\zeta^m
    +
    E_\zeta^{m-1}
    +
    E_\Delta^{m-1}
    +
    \sum_{i\in \I}
   \big\|\delta \tau^{i,m}\big\|_{\mathbb{L}^{1,1}} 
   + 
   \big\|\delta \tau^{i,m}\big\|_{\mathbb{L}^{1,1}}^{1-\varepsilon},
\end{align*}
and
\begin{align*}
    \sum_{i\in \I}
    \Big\|
    \zeta^{i,[\tau]}(\cdot,\X_\cdot)
    -
    \zeta^{i,m,[\tau^{i,m}]}\big(\cdot,\X_\cdot^{i,m}\big)
    \Big\|_{\mathbb H^{2,n}_{T}}^2
    \lesssim
    E_\zeta^m
    +
    E_\zeta^{m-1}
    +
    E_\Delta^{m-1}
    +
    \sum_{i\in \I}
    \big\|\delta \tau^{i,m}\big\|_{\mathbb{L}^{1,1}}^{1-\varepsilon} .
\end{align*}
\end{lemma}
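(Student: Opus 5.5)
The plan is the standard BSDE a priori estimate applied to the difference $\delta Y^{i,m}$. We work first on the common interval $[0,\tau_{\min}^{i,m}]$ and treat the remaining piece $[\tau^{i,m}_{\min},\tau^{i,m}_{\max}]$ separately, as in the proof of Lemma~\ref{lemma:delta_x_bound}.

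\textbf{Representation of the difference.} Fix $i$ and $m$ and subtract the backward equation in~\eqref{eq:FP_FBSDE} from the $i$-th component of~\eqref{eq:FBSDE_full}. On $[0,\tau^{i,m}_{\min}]$ this gives
\begin{align*}
\delta Y^{i,m}_t = \delta Y^{i,m}_{\tau^{i,m}_{\min}} + \int_{t\wedge\tau^{i,m}_{\min}}^{\tau^{i,m}_{\min}} \delta\ell^{i,m}_s\,\d s - \int_{t\wedge\tau^{i,m}_{\min}}^{\tau^{i,m}_{\min}} \big(\delta\zeta^{i,m}_s\big)^\top \d\W_s .
\end{align*}
Here $\delta\zeta^{i,m}_s = \zeta^i(s,\X_s)-\zeta^{i,m}(s,\X^{i,m}_s)$, using~\eqref{eq:YZ1} and~\eqref{eq:Z_FP}.

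\textbf{Terminal term.} Write $Y^i_{\tau}-Y^i_{\tau^{i,m}_{\min}}$ through the BSDE over the random interval $[\tau^{i,m}_{\min},\tau^{i,m}_{\max}]$, and do the same for $Y^{i,m}$. Then $\delta Y^{i,m}_{\tau^{i,m}_{\min}}$ equals $\delta g^{i,m}$ plus drift and martingale pieces living only on that interval. The Lipschitz continuity of $g^i$ yields
\begin{align*}
\E|\delta g^{i,m}|^2 \lesssim \|\delta\tau^{i,m}\|_{\mathbb L^{1,1}} + \E\|\X_\tau-\X^{i,m}_{\tau^{i,m}}\|^2 .
\end{align*}
The last term is bounded by $\|\delta\X^{i,m}\|_{\mathbb S^{2,n}_T}^2$ plus the increment of $\X^{i,m}$ over $[\tau,\tau^{i,m}]$. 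That increment is controlled by $\|\delta\tau^{i,m}\|_{\mathbb L^{1,1}}$, because the coefficients have bounded moments. This is where the linear power $\|\delta\tau^{i,m}\|_{\mathbb L^{1,1}}$ enters. The drift pieces on $[\tau^{i,m}_{\min},\tau^{i,m}_{\max}]$ are handled exactly as in~\eqref{eq:second_counterpart}, using H\"older's inequality with exponents $(1-\varepsilon)^{-1}$ and $\varepsilon^{-1}$ together with the uniform high moments. This produces the term $\|\delta\tau^{i,m}\|_{\mathbb L^{1,1}}^{1-\varepsilon}$. The stochastic integrals of $\zeta^i(\cdot,\X)$ and $\zeta^{i,m}(\cdot,\X^{i,m})$ over the same interval are treated the same way. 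For these one uses~\eqref{eq:BDG1} together with the linear growth of the Markov maps implied by~\eqref{eq:lipschitz_zetaim} and~\eqref{eq:LFP}.

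\textbf{Weighted It\^o estimate on the common interval.} Apply It\^o's formula to $e^{\beta t}|\delta Y^{i,m}_t|^2$ on $[0,\tau^{i,m}_{\min}]$. Take expectations, and then apply BDG again to pass to the supremum norm. This gives
\begin{align*}
\|\delta Y^{i,m,[\tau^{i,m}_{\min}]}\|^2_{\mathbb S^{2,1}_{\beta,T}} + \|\delta\zeta^{i,m,[\tau^{i,m}_{\min}]}\|^2_{\mathbb H^{2,n}_{\beta,T}} \lesssim e^{\beta T}\E|\delta Y^{i,m}_{\tau^{i,m}_{\min}}|^2 + \|\delta\ell^{i,m,[\tau^{i,m}_{\min}]}\|^2_{\mathbb H^{2,1}_{T}} .
\end{align*}
The cross term $2\delta Y\,\delta\ell$ is absorbed by Young's inequality. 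No absorption of a $Z$-term is required: by Lemma~\ref{lem:delta_bi_bound}, $\delta\ell^{i,m}$ is already bounded by $\|\delta\X^{i,m,[\tau^{i,m}_{\min}]}\|^2_{\mathbb H^{2,n}_T}$, $\|\delta\bar\zeta^{i,m}\|^2$, $\sum_{j\ne i}\|\delta\bar\zeta^{j,m-1}\|^2$ and $E^{m-1}_\Delta$, none of which involve the unknown $\delta\zeta^{i,m}$. Next insert Lemma~\ref{lemma:delta_x_bound}: use~\eqref{eq:delta_x_bound_1} for the $\delta\X$ terms and~\eqref{eq:delta_x_bound_2} for the terminal state error. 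Then sum over $i$, noting that $\sum_i\|\delta\bar\zeta^{i,m}\|^2=E^m_\zeta$ and $\sum_i\sum_{j\neq i}\|\delta\bar\zeta^{j,m-1}\|^2\le (N-1)E^{m-1}_\zeta$. This yields the first inequality.

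\textbf{Second inequality.} Split the integrand into its parts on $[0,\tau^{i,m}_{\min}]$ and on $[\tau^{i,m}_{\min},\tau^{i,m}_{\max}]$, using~\eqref{eq:H2_DS}. The first part is the $\delta\zeta^{i,m,[\tau^{i,m}_{\min}]}$ term already controlled above. The second part is bounded by H\"older's inequality as in~\eqref{eq:second_counterpart}, giving only $\|\delta\tau^{i,m}\|^{1-\varepsilon}_{\mathbb L^{1,1}}$.

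\textbf{Main obstacle.} The hardest step is the careful treatment of $\delta Y^{i,m}_{\tau^{i,m}_{\min}}$ when the two stopping times differ. One has to verify that all contributions from the mismatch interval are bounded uniformly in $m$ by a power of $\|\delta\tau^{i,m}\|_{\mathbb L^{1,1}}$. This requires uniform-in-$m$ moment bounds on $\X^{i,m}$ and $\zeta^{i,m}(\cdot,\X^{i,m})$, and those bounds rely on $\LFP<\infty$.
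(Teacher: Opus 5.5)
Your argument is correct, but it is organized differently from the paper's.

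\textbf{The paper's route.} The paper never stops the backward equation at $\tau^{i,m}_{\min}$. By the killing convention, $\delta Y^{i,m}$ solves one BSDE on the fixed horizon $[0,T]$, with terminal value $\delta g^{i,m}$, driver $\delta\ell^{i,m}$ and integrand $\delta\zeta^{i,m}$. It applies It\^o's formula to $|\delta Y^{i,m}|^2$, drops the $\|\delta\zeta^{i,m}\|^2$ term, and uses BDG and Young to reach \eqref{eq:Ybound_second}. There $\|\delta\zeta^{i,m}\|^2_{\mathbb H^{2,n}_T}$ stays on the right-hand side. That norm, like $\|\delta\ell^{i,m}\|^2_{\mathbb H^{2,1}_T}$, is then split via \eqref{eq:H2_DS}. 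The common part is estimated directly by Lemma~\ref{lemma:delta_zeta_decomposition} and Lemma~\ref{lemma:delta_x_bound}, and the mismatch part by H\"older as in \eqref{eq:second_counterpart}. So the second inequality needs no BSDE argument at all.

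\textbf{Your route.} You stop at $\tau^{i,m}_{\min}$ and move every mismatch contribution into the random terminal value $\delta Y^{i,m}_{\tau^{i,m}_{\min}}$. You then control $\delta\zeta^{i,m,[\tau^{i,m}_{\min}]}$ through the energy identity of the BSDE rather than through the Lipschitz decomposition. This is the textbook BSDE stability estimate and yields $\delta Y$ and $\delta\zeta$ together. The paper's route is shorter, avoids the bookkeeping for $\delta Y^{i,m}_{\tau^{i,m}_{\min}}$, and gives the second inequality with only the $(1-\varepsilon)$-power. Your route does not avoid any assumption: the Lipschitz continuity of the Markov maps enters your proof anyway through Lemmas~\ref{lem:delta_bi_bound} and~\ref{lemma:delta_x_bound}.

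\textbf{Two routine steps to make explicit.} First, your It\^o estimate bounds only $\delta Y^{i,m,[\tau^{i,m}_{\min}]}$, whereas the lemma concerns $\|\delta Y^{i,m}\|_{\mathbb S^{2,1}_T}$. You also need a bound on $\E\big[\sup_{t\in[\tau^{i,m}_{\min},T]}|\delta Y^{i,m}_t|^2\big]$. On that range, $\delta Y^{i,m}_t$ is $\delta g^{i,m}$ plus the drift and stochastic-integral pieces over $[t\wedge\tau^{i,m}_{\max},\tau^{i,m}_{\max}]$. The same H\"older and BDG estimates you use for $\delta Y^{i,m}_{\tau^{i,m}_{\min}}$ therefore apply; combine them with \eqref{eq:S_norm_split}.

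Second, the bound you reuse for $\delta\zeta^{i,m,[\tau^{i,m}_{\min}]}$ contains $\E|\delta Y^{i,m}_{\tau^{i,m}_{\min}}|^2$. It therefore carries a linear term $\|\delta\tau^{i,m}\|_{\mathbb L^{1,1}}$ that is absent from the second inequality. It is absorbed because $|\delta\tau^{i,m}|\le T$ gives $\|\delta\tau^{i,m}\|_{\mathbb L^{1,1}}\le T^{\varepsilon}\|\delta\tau^{i,m}\|_{\mathbb L^{1,1}}^{1-\varepsilon}$.

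\textbf{Your main obstacle.} It is milder than you suggest, but your justification is off. A uniform Lipschitz constant $\LFP$ does not by itself give uniform-in-$m$ growth bounds for $\zeta^{i,m}$; you would also need a uniform bound at one point. However, a stopping-time mismatch only occurs for bounded $\D$, i.e.\ under Assumption~\ref{assump:gradient}. There, \eqref{eq:grad_bound} and the upper bound in \eqref{eq:UP} make $\zeta^{i,m}=\bSigma^\top\mathrm D_{\x}V^{i,m}$ bounded uniformly in $m$. Moreover, $\X$ and $\X^{i,m}$ take values in $\overline\D$, so the required moment bounds are immediate.
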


\begin{proof}
Let $i \in \I$ and $m \geq 1$.  For all $t \in [0,T]$, we have $\mathbb P$-almost surely
\begin{align*}
\delta Y_t^{i,m}
=\;&
\delta g^{i,m}
+
\int_t^T \delta \ell^{i,m}_s
\, \d s
-
\int_t^T \big(
\delta \zeta^{i,m}_s\big)^\top\, \d \W_s
.
\end{align*}
Note that each backward process is frozen after its respective stopping time. When the equations are extended to the fixed horizon $[0,T]$ via killed coefficients, the processes therefore remain constant beyond their stopping times. In particular,
\begin{align}
    \delta Y_T^{i,m} 
= Y_{\tau}^{i} - Y_{\tau^{i,m}}^{i,m}
= g^i(\tau,\X_\tau) - g^i\big(\tau^{i,m},\X_{\tau^{i,m}}^{i,m}\big)
= \delta g^{i,m}. \label{eq:Y_T_gi}
\end{align}
Applying Itô’s lemma to the map \(y\mapsto|y|^2\) along $\delta Y_t^{i,m}$, together with~\eqref{eq:Y_T_gi}, we have
\begin{align*}
            \big|\delta Y_t^{i,m}\big|^2
            =&
            \big|\delta g^{i,m}\big|^2
            +
            2\int_t^T
            \delta Y_s^{i,m}\delta \ell^{i,m}_s\, \d s
-
    \int_t^T\big\|
\delta \zeta^{i,m}_s\big\|_{\R^n}^2
\, \d s
-
            2\int_t^T \delta Y_s^{i,m}\big(
\delta \zeta^{i,m}_s
\big)^\top\, \d \W_s.
        \end{align*}
Dropping the third term and taking absolute values, we obtain
\begin{align*}
        \big|\delta Y_t^{i,m}\big|^2
        &\leq
        \big|\delta g^{i,m}\big|^2
        +
        2\, \bigg|\int_t^T
        \delta Y_s^{i,m}
\delta \ell^{i,m}_s
\, \d s\bigg|
        +2\, \bigg|
        \int_{t}^T \delta Y_s^{i,m}\big(
\delta \zeta^{i,m}_s
\big)^\top\, \d \W_s\,\bigg|.
    \end{align*}
Taking the supremum over $t\in[0,T]$, first on the right-hand side and then on the left-hand side, and then expectations yields
\begin{align}
        \big\|\delta Y^{i,m}\big\|^2_{\mathbb S^{2,1}_{T}}
        &\leq
        \big\|\delta g^{i,m}\big\|^2_{\mathbb L^{2,1}}
        +
        2\, \bigg\|\int_{\cdot}^T
        \delta Y_s^{i,m}
        \delta \ell^{i,m}_s
        \, \d s\bigg\|_{\mathbb S^{1,1}_T}
        +
        2\, \bigg\|
         \int_\cdot^T\delta Y_s^{i,m}\big(\delta \zeta_s^{i,m}
         \big)^\top \, \d\W_s\bigg\|_{\mathbb S^{1,1}_{T}}. \label{eq:deltay_first_bound}
\end{align}
We bound the two integral terms separately. For the first one, we use~\eqref{eq:integral_in_S11_to_Hnorms} together with~\eqref{eq:HS2}, followed by Young's inequality with parameter $\gamma > 0$, such that
\begin{align}
       2\, \bigg\|\int_{\cdot}^T
        \delta Y_s^{i,m}
        \delta \ell^{i,m}_s
        \, \d s\bigg\|_{\mathbb S^{1,1}_T}
    &\leq 
        2T \big\| \delta Y^{i,m} \big\|_{\mathbb S^{2,1}_{T}}
        \big\| \delta \ell^{i,m} \big\|_{\mathbb H^{2,1}_{T}}
    \leq
        \gamma^{-1} T^2\big\| \delta Y^{i,m} \big\|^2_{\mathbb S^{2,1}_{T}}
    +
        \gamma  \big\| \delta \ell^{i,m} \big\|_{\mathbb H^{2,1}_{T}}^2. \label{eq:young_first}
\end{align}
For the second one, we use the Burkholder--Davis--Gundy inequality~\eqref{eq:BDG1} with $p=1$, then~\eqref{eq:HS3} and finally Young's inequality, to get
\begin{align}
\begin{split}
        2\, \bigg\|
         \int_\cdot^T\delta Y^{i,m}\big(\delta \zeta_s^{i,m}
         \big)^\top \, \d\W_s\bigg\|_{\mathbb S^{1,1}_{T}}
    &\leq 
        2K_1\big\|
             \delta Y^{i,m}\delta \zeta^{i,m}
              \big\|_{\mathbb H^{1,n}_{T}} \\
    &\leq 
        2K_1\big\|
             \delta Y^{i,m}
             \big\|_{\mathbb S^{2,1}_T}
             \big\|
             \delta \zeta^{i,m}
              \big\|_{\mathbb H^{2,n}_{T}} \\
    &\leq 
        K_1 \lambda^{-1}
        \big\|\delta Y^{i,m}\big\|_{\mathbb S^{2,1}_{T}}^2
        +
        K_1\lambda
        \big\|\delta \zeta^{i,m}\big\|_{\mathbb H^{2,n}_{T}}^2. 
\end{split} \label{eq:young_second}
\end{align}
Substituting~\eqref{eq:young_first} and~\eqref{eq:young_second} into~\eqref{eq:deltay_first_bound}, choosing $\gamma = 4T^2$ and $\lambda = 4K_1$, and moving the terms involving $\delta Y^{i,m}$ to the left-hand side, we obtain
\begin{align}
    \big\|\delta Y^{i,m}\big\|^2_{\mathbb S^{2,1}_{T}}
        &\lesssim
        \big\|\delta g^{i,m}\big\|^2_{\mathbb L^{2,1}}
        +
        \big\| \delta \ell^{i,m} \big\|_{\mathbb H^{2,1}_{T}}^2
        +
        \big\|\delta \zeta^{i,m}\big\|_{\mathbb H^{2,n}_{T}}^2. \label{eq:Ybound_second}
\end{align}
We now estimate each of the three terms on the right-hand side of~\eqref{eq:Ybound_second} separately. For the first term, the Lipschitz continuity of $g^i$ and~\eqref{ineq:sumsq} imply
\begin{align*}
    \big\|\delta g^{i,m}\big\|_{\mathbb L^{2,1}}^2
    &\lesssim
    \big\|\delta \tau^{i,m}\big\|_{\mathbb{L}^{1,1}}
    +
    \Big\|\X_\tau-\X_{\tau^{i,m}}^{i,m}\Big\|_{\mathbb{L}^{2,n}}^2\\
    &=
    \big\|\delta \tau^{i,m}\big\|_{\mathbb{L}^{1,1}}
    +
    \Big\|\X_\tau-\X_{\tau^{i,m}} +\delta\X_{\tau^{i,m}}^{i,m}\Big\|_{\mathbb{L}^{2,n}}^2\\
    &\leq
    \big\|\delta \tau^{i,m}\big\|_{\mathbb{L}^{1,1}}
    +
    2\big\|\X_\tau-\X_{\tau^{i,m}}\big\|_{\mathbb{L}^{2,n}}^2
  + 2\Big\|\delta\X_{\tau^{i,m}}^{i,m}\Big\|_{\mathbb{L}^{2,n}}^2\\
  &\lesssim
  \big\|\delta \tau^{i,m}\big\|_{\mathbb{L}^{1,1}}
+\big\|\delta\X^{i,m}\big\|_{\mathbb S^{2,n}_{T}}^2 \\
&\lesssim
\big\|\delta \tau^{i,m}\big\|_{\mathbb{L}^{1,1}}
+
\big\|\delta \tau^{i,m}\big\|_{\mathbb{L}^{1,1}}^{1-\varepsilon}
+
    \Big\| \delta\bar \zeta^{i,m,[\tau_{\min}^{i,m}]}\Big\|_{\mathbb{H}^{2,n}_{T}}^2
    +
    \sum_{j\in\I\setminus\{i\}}\Big\| \delta \bar\zeta^{j,m-1,[\tau_{\min}^{i,m}]}\Big\|_{\mathbb{H}^{2,n}_{T}}^2 + E_\Delta^{m-1},
\end{align*}
where the fourth step uses the estimates
\begin{align*}
    \big\|\X_\tau-\X_{\tau^{i,m}}\big\|_{\mathbb{L}^{2,n}}^2 &\lesssim \big\|\delta \tau^{i,m}\big\|_{\mathbb{L}^{1,1}}, \\
    \Big\|\delta\X_{\tau^{i,m}}^{i,m}\Big\|_{\mathbb L^{2,n}}^2 &\leq \big\|\delta\X^{i,m}\big\|_{\mathbb S^{2,n}_{T}}^2,
\end{align*}
while the final inequality follows from Lemma~\ref{lemma:delta_x_bound}. For the second term in~\eqref{eq:Ybound_second}, we use~\eqref{eq:H2_DS} to split the process over two disjoint intervals. The first part is then estimated using Lemma~\ref{lem:delta_bi_bound}, and the second part is treated as in~\eqref{eq:second_counterpart}. This gives
\begin{align*}
    \big\|\delta\ell^{i,m}\big\|_{\mathbb H^{2,1}_{T}}^2
    &=
    \Big\|\delta\ell^{i,m,[\tau_{\min}^{i,m}]}\Big\|_{\mathbb H^{2,1}_{T}}^2
    +
    \Big\|\delta\ell^{i,m,[\tau_{\min}^{i,m},\tau_{\max}^{i,m}]}\Big\|_{\mathbb H^{2,1}_{T}}^2 \\
    &\lesssim
    \Big\|\delta \X^{i,m,[\tau_{\min}^{i,m}]}\Big\|_{\mathbb S^{2,n}_{T}}^2
+
\Big\|\delta\bar\zeta^{i,m,[\tau_{\min}^{i,m}]}\Big\|^2_{\mathbb H^{2,n}_{T}} 
+
\sum_{j\in\I\setminus\{i\}}\Big\|\delta\bar\zeta^{j,m-1,[\tau_{\min}^{i,m}]}\Big\|^2_{\mathbb H^{2,n}_{T}} + E_\Delta^{m-1} + \big\|\delta \tau^{i,m}\big\|_{\mathbb{L}^{1,1}}^{1-\varepsilon} \\
&\lesssim
\Big\|\delta\bar\zeta^{i,m,[\tau_{\min}^{i,m}]}\Big\|^2_{\mathbb H^{2,n}_{T}} 
+
\sum_{j\in\I\setminus\{i\}}\Big\|\delta\bar\zeta^{j,m-1,[\tau_{\min}^{i,m}]}\Big\|^2_{\mathbb H^{2,n}_{T}} + E_\Delta^{m-1} + \big\|\delta \tau^{i,m}\big\|_{\mathbb{L}^{1,1}}^{1-\varepsilon},
\end{align*}
where the final inequality follows from Lemma~\ref{lemma:delta_x_bound}. The third term in~\eqref{eq:Ybound_second} is handled analogously, as
\begin{align*}
    \big\|\delta\zeta^{i,m}\big\|_{\mathbb H^{2,n}_{T}}^2
    &=
    \Big\|\delta\zeta^{i,m,[\tau_{\min}^{i,m}]}\Big\|_{\mathbb H^{2,n}_{T}}^2
    +
    \Big\|\delta\zeta^{i,m,[\tau_{\min}^{i,m},\tau_{\max}^{i,m}]}\Big\|_{\mathbb H^{2,n}_{T}}^2 \\
    &\lesssim 
    \Big\|\delta \X^{i,m,[\tau_\text{min}^{i,m}]}\Big\|_{\mathbb{S}^{2,n}_{T}}^2
    +
    \Big\|\delta \bar{\zeta}^{i,m,[\tau_\text{min}^{i,m}]}\Big\|_{\mathbb{H}^{2,n}_{T}}^2
    +
    \big\|\delta \tau^{i,m}\big\|_{\mathbb{L}^{1,1}}^{1-\varepsilon}\\
&\lesssim
\Big\|\delta\bar\zeta^{i,m,[\tau_{\min}^{i,m}]}\Big\|^2_{\mathbb H^{2,n}_{T}} 
+
\sum_{j\in\I\setminus\{i\}}\Big\|\delta\bar\zeta^{j,m-1,[\tau_{\min}^{i,m}]}\Big\|^2_{\mathbb H^{2,n}_{T}} + \big\|\delta \tau^{i,m}\big\|_{\mathbb{L}^{1,1}}^{1-\varepsilon}.
\end{align*}
Combining the bounds for the three terms in~\eqref{eq:Ybound_second} and summing over $i\in\I$, we obtain
\begin{align*}
    \sum_{i\in\I}
    \big\|
    \delta Y^{i,m}
    \big\|^2_{\mathbb S^{2,1}_{T}}
    \lesssim
    E_\zeta^{m}
    +
    E_\zeta^{m-1}
    +
    E_\Delta^{m-1}
    +
    \sum_{i\in\I}
    \big\|\delta \tau^{i,m}\big\|_{\mathbb{L}^{1,1}}
    + 
    \big\|\delta \tau^{i,m}\big\|_{\mathbb{L}^{1,1}}^{1-\varepsilon}.
\end{align*}
This concludes the first claim of the lemma. For the second claim, we have the decomposition
\begin{equation*}
\Big\|\zeta^{i,[\tau]}(\cdot,\X_\cdot)-\zeta^{i,m,[\tau^{i,m}]}\big(\cdot,\X_\cdot^{i,m}\big)\Big\|_{\mathbb H^{2,n}_{T}}^2
=
\Big\|\delta\zeta^{i,m,[\tau_{\min}^{i,m}]}\Big\|_{\mathbb H^{2,n}_{T}}^2
+
\Big\|\delta\zeta^{i,m,[\tau_{\min}^{i,m},\tau_{\max}^{i,m}]}\Big\|_{\mathbb H^{2,n}_{T}}^2.
\end{equation*}
Applying Lemma~\ref{lemma:delta_zeta_decomposition}, \eqref{eq:HS2}, Lemma~\ref{lemma:delta_x_bound} and summing over $i\in\I$ yields the desired estimate.
\end{proof}

\begin{proposition}\label{prop:deltaXYZ}
Recall the error quantity $\Psi^{m}$ from~\eqref{eq:Psim}. For all \(\varepsilon\in(0,1)\), \(m\geq1\), it holds that 
\begin{align*}
    \Psi^{m} 
    \lesssim 
    E_\zeta^{m}
    +
    E_\zeta^{m-1}
    +
    E_\Delta^{m-1}
    +
    \sum_{i\in\I}
    \big\|\delta\tau^{i,m}\big\|_{\mathbb{L}^{1,1}}
    +
    \big\|\delta\tau^{i,m}\big\|_{\mathbb{L}^{1,1}}^{1-\varepsilon}.
\end{align*}    
\end{proposition}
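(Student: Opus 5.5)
The estimate follows by bounding the three summands in the definition~\eqref{eq:Psim} of $\Psi^m$ one at a time and adding the results. For the $Y$- and $\zeta$-terms, Lemma~\ref{lemma:delta_y_bound} already gives exactly the right-hand side of the proposition: its first claim handles $\sum_{i}\|\delta Y^{i,m}\|^2_{\mathbb S^{2,1}_T}$, and its second claim handles the $\mathbb H^{2,n}_T$-distance between the killed Markov maps, in both cases after summing over $i\in\I$. No further work is needed for these two terms beyond adding the bounds, since the implicit constants depend only on $\varepsilon$ and the data.

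For the forward term $\max_{i\in\I}\|\delta\X^{i,m}\|^2_{\mathbb S^{2,n}_T}$, I use the second claim~\eqref{eq:delta_x_bound_2} of Lemma~\ref{lemma:delta_x_bound}. For each fixed $i$, it bounds $\|\delta\X^{i,m}\|^2_{\mathbb S^{2,n}_T}$ by the sum of four terms: $\|\delta\tau^{i,m}\|_{\mathbb L^{1,1}}^{1-\varepsilon}$, the killed norm $\|\delta\bar\zeta^{i,m,[\tau^{i,m}_{\min}]}\|^2$, the killed norms $\|\delta\bar\zeta^{j,m-1,[\tau^{i,m}_{\min}]}\|^2$ for $j\neq i$, and $E_\Delta^{m-1}$.

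I then remove the killing. Since $\mathbf 1_{[0,\tau^{i,m}_{\min}]}\le 1$, each killed $\mathbb H^{2,n}_T$-norm is dominated by the unkilled one. Hence $\|\delta\bar\zeta^{i,m,[\tau^{i,m}_{\min}]}\|^2_{\mathbb H^{2,n}_T}\le E^m_\zeta$ and $\sum_{j\ne i}\|\delta\bar\zeta^{j,m-1,[\tau^{i,m}_{\min}]}\|^2_{\mathbb H^{2,n}_T}\le E^{m-1}_\zeta$. It follows that
\begin{align*}
\|\delta\X^{i,m}\|^2_{\mathbb S^{2,n}_T}\lesssim E_\zeta^m+E_\zeta^{m-1}+E_\Delta^{m-1}+\|\delta\tau^{i,m}\|_{\mathbb L^{1,1}}^{1-\varepsilon}.
\end{align*}
To pass to the maximum over $i$, I bound it by the sum over $i$. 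This gives $\max_i\|\delta\X^{i,m}\|^2_{\mathbb S^{2,n}_T}\lesssim E_\zeta^m+E_\zeta^{m-1}+E_\Delta^{m-1}+\sum_i\|\delta\tau^{i,m}\|^{1-\varepsilon}_{\mathbb L^{1,1}}$, at the cost of a factor $N$ absorbed into the implicit constant. The extra term $\sum_i\|\delta\tau^{i,m}\|_{\mathbb L^{1,1}}$ in the statement comes only from the $Y$-bound and is nonnegative, so adding it to the right-hand side is harmless.

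There is no genuine obstacle here. The only point needing care is uniformity: the constants in Lemmas~\ref{lemma:delta_x_bound} and~\ref{lemma:delta_y_bound} must be independent of $m$. This holds because they depend only on $\varepsilon$, $T$, $N$, the Lipschitz constants in~(L), the uniform constant $\LFP$ from~\eqref{eq:LFP}, and the uniform moment bounds on $\X$ and $\X^{i,m}$ used in the proof of Lemma~\ref{lemma:delta_x_bound}. Summing the three estimates then yields the claim.
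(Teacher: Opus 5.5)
Your proposal is correct and follows the paper's route. The paper's proof simply cites the second claim of Lemma~\ref{lemma:delta_x_bound} for the forward term and Lemma~\ref{lemma:delta_y_bound} for the $Y$- and $\zeta$-terms; your extra steps (dropping the killing at $\tau^{i,m}_{\min}$ to compare with $E^m_\zeta$ and $E^{m-1}_\zeta$, and bounding the maximum over $i$ by the sum) are exactly the details the paper leaves implicit.
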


\begin{proof}
    The result follows immediately from Lemma~\ref{lemma:delta_x_bound} and Lemma~\ref{lemma:delta_y_bound}.
\end{proof}

\subsubsection{Stopping-time error}

We now estimate the stopping-time error $\delta \tau^{i,m}$. The following proposition shows that this error is controlled by the Markov-map error, and disappears entirely in the case $\D = \R^n$.

\begin{proposition}\label{prop:stoppin_time_L1}
For $|\D|<\infty$, there exists $C<\infty$ such that for all $i\in\I$ and $m\geq 1$, it holds
\begin{align*}
    \big\|\delta\tau^{i,m}\big\|_{\mathbb{L}^{1,1}}
    \leq
    C
    \bigg(
        \Big\|
        \delta\bar \zeta^{i,m,[\tau_{\min}^{i,m}]}
        \Big\|_{\mathbb H^{2,n}_{T}}^2
        +
        \sum_{j\in\I\setminus\{i\}}
        \Big\|
        \delta \bar{\zeta}^{j,m-1,[\tau_{\min}^{i,m}]}
        \Big\|_{\mathbb{H}^{2,n}_{T}}^2 
        +
        E_\Delta^{m-1}
    \bigg)^{1/2}.
\end{align*}
For $\D=\R^n$, it holds $\P$-almost surely that $\delta\tau^{i,m}=0$.
\end{proposition}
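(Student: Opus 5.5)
The case $\D=\R^n$ is immediate. Here $\partial Q_T=\{T\}\times\R^n$, so both $\X$ and $\X^{i,m}$ can only exit $Q_T^0$ through the terminal boundary. Hence $\tau=\tau^{i,m}=T$ holds $\P$-almost surely, and therefore $\delta\tau^{i,m}=0$.

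For $|\D|<\infty$ I would use a PDE/Itô argument based on an exit-time function. Let $u\colon Q_T\to\R$ solve the linear backward problem
\[
\partial_t u+\tfrac12\mathrm{Tr}\big(\bSigma\,\mathrm D_{\x}^2u\,\bSigma^\top\big)+\langle \bar{\boldsymbol b}^*,\mathrm D_{\x}u\rangle=-1 \ \text{ in } Q_T^0,
\qquad u=0 \ \text{ on } \partial Q_T,
\]
where $\bar{\boldsymbol b}^*(t,\x)$ is the drift of whichever process is stopped later. Uniform parabolicity~\eqref{eq:UP}, the $C^2$ boundary, and the Lipschitz coefficients give a classical solution with bounded gradient on $Q_T$. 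Then Itô's formula gives $u(t,\X_t)=\E[\tau-t\mid\F_t]$ up to the killing.

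I will split
\[
\E|\delta\tau^{i,m}|=\E[(\tau-\tau^{i,m})^+]+\E[(\tau^{i,m}-\tau)^+].
\]
Consider the first term. On $\{\tau^{i,m}<\tau\}$, the strong Markov property gives
\[
\E[(\tau-\tau^{i,m})^+]=\E\big[u(\tau^{i,m},\X_{\tau^{i,m}})\mathbf 1_{\{\tau^{i,m}<\tau\}}\big],
\]
with $u$ built from the drift of $\X$. Since $(\tau^{i,m},\X^{i,m}_{\tau^{i,m}})\in\partial Q_T$, we have $u(\tau^{i,m},\X^{i,m}_{\tau^{i,m}})=0$. The Lipschitz bound on $u$ then yields
\[
\E[(\tau-\tau^{i,m})^+]\le\|\mathrm D_{\x}u\|_\infty\,\E\big\|\X_{\tau^{i,m}}-\X^{i,m}_{\tau^{i,m}}\big\|
\le C\big\|\delta\X^{i,m,[\tau_{\min}^{i,m}]}\big\|_{\mathbb S^{2,n}_T},
\]
where the last step uses Cauchy--Schwarz.

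The second term is symmetric. Here one uses an exit-time function $u^{i,m}$ for the drift $\bb^{i,m}(\cdot,\cdot,\zeta^{i,m},\bzeta^{-i,m-1})$. This function is Lipschitz uniformly in $m$, thanks to~\eqref{eq:LFP}, the compactness of $A$, and Schauder/gradient estimates for linear parabolic equations with bounded drift. It is evaluated at $(\tau,\X_\tau^{i,m})$ against $(\tau,\X_\tau)\in\partial Q_T$.

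Combining both terms with the first estimate~\eqref{eq:delta_x_bound_1} of Lemma~\ref{lemma:delta_x_bound} gives the stated square-root bound.

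The main obstacle is the uniform-in-$m$ Lipschitz bound on the exit-time functions up to the boundary. A bounded measurable drift only yields Hölder regularity of the gradient. It also needs a barrier construction at $\partial\D$, which should use the $C^2$ boundary and the exterior sphere condition to get $u(t,\x)\le C\,\mathrm{dist}(\x,\partial\D)$. A second delicate point is near $t=T$, where the terminal condition $u=0$ is compatible since the exit-time function vanishes there too. Because only boundedness of the drift enters these estimates, I would first try the barrier argument, with constants depending only on $c,C$ and $\sup\|\bar{\boldsymbol b}\|$ over the compact $A$.
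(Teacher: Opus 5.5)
Your proof follows the same route as the paper. The paper obtains $\|\delta\tau^{i,m}\|_{\mathbb L^{1,1}}\lesssim\|\delta\X^{i,m}_{\tau^{i,m}_{\min}}\|_{\mathbb L^{1,n}}$ from \cite[Theorem~2.3]{dokuchaev2004estimates}, an estimate proved via parabolic equations, i.e.\ essentially the exit-time-function argument you sketch. It then applies Cauchy--Schwarz, the supremum over $[0,T]$ and the first estimate of Lemma~\ref{lemma:delta_x_bound}, and handles the case $\D=\R^n$ exactly as you do.

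The obstacle you flag is not a real one, because you only compare $u$ at an interior point with its value $0$ at a boundary point. The barrier bound $u(t,\x)\le C\,\mathrm{dist}(\x,\partial\D)$, and likewise for $u^{i,m}$, is therefore all you need; it follows from Dynkin's formula for $u(t,\x)=\E[\tau^{t,\x}-t]$ with an exterior-sphere barrier. The constant $C$ depends only on the constants in~\eqref{eq:UP}, $\sup_{Q_T}\tnorm{\bSigma}_{\R^{n\times n}}$, $\sup_{Q_T\times A}\|\bar\bb\|_{\R^n}$ (which bounds the drifts of $\X$ and of every $\X^{i,m}$) and the exterior-sphere radius of $\partial\D$. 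It is thus uniform in $i$ and $m$ without~\eqref{eq:LFP} or Schauder theory. Global Schauder estimates up to $\partial Q_T$ are unavailable anyway, since the first-order compatibility condition fails at the corner $\{T\}\times\partial\D$.
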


\begin{proof}
    In the case $|\D|<\infty$, by \cite[Theorem 2.3]{dokuchaev2004estimates}, the Cauchy--Schwarz inequality and taking the supremum over $[0,T]$ we have for $i\in\I$ and $m\geq1$ that
\begin{align*}
  \big\|\delta \tau^{i,m}\big\|_{\mathbb{L}^{1,1}}
    \lesssim
    \Big\|\delta\X_{\tau_{\min}^{i,m}}^{i,m}\Big\|_{\mathbb{L}^{1,n}}
    \leq
    \Big\|\delta\X_{\tau_{\min}^{i,m}}^{i,m}\Big\|_{\mathbb{L}^{2,n}}
    \leq
    \Big\|\delta\X^{i,m,[\tau_{\min}^{i,m}]}\Big\|_{\mathbb S^{2,n}_{T}}.
\end{align*}
The claim now follows by applying Lemma~\ref{lemma:delta_x_bound}. For $\D = \R^n$, we have $\partial\D=\emptyset$ and thus $\tau=\tau^{i,m}$ $\mathbb P$-almost surely. In particular, $\|\delta \tau^{i,m}\|_{\mathbb{L}^{1,1}}=0$.
\end{proof}

\subsubsection{Markov-map error}

We next turn to the Markov-map error. The analysis also involves the policy-mismatch error $E_\Delta^m$, such that $E_\zeta^m$ and $E_\Delta^m$ must be estimated jointly. Lemmas~\ref{lem:functional_error_bounded} and~\ref{lemma:functional_error_unbounded} provide the Markov-map estimates under Assumptions~\ref{assump:gradient} and~\ref{assump:smalltime}, respectively. Lemma~\ref{lem:policy_error} gives the corresponding bound for the policy error. Lemma~\ref{lem:spectral_radius} then shows that the resulting coupled estimate is contractive, and Proposition~\ref{prop:functional_error} summarizes the resulting convergence estimate.

\begin{lemma}\label{lem:functional_error_bounded}
   Consider the setting in Assumption~\ref{assump:gradient}. There exists $\beta_0 \in (0,\infty)$ such that for every $\beta > \beta_0$, there exist constants $C^{\Asup}_\Delta(\beta) < \infty$ and $\tilde K_{\Asup}(\beta) \in (0,1)$ for which, for all $m \geq 1$, 
    \begin{align*}
        E^m_{\zeta, \beta}
        \leq
         \tilde{K}_{\Asup}(\beta) E_{\zeta,\beta}^{m-1}
         +
         C^{\Asup}_\Delta(\beta) E_{\Delta,\beta}^{m-1}.
    \end{align*}
\end{lemma}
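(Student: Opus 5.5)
The plan is a weighted energy estimate on the difference of two value functions, both evaluated along the \emph{same} equilibrium state process $\X$. Here $E^m_{\zeta,\beta}$ and $E^m_{\Delta,\beta}$ denote the analogues of $E^m_\zeta$ and $E^m_\Delta$ with $\mathbb H^{2,n}_{T}$ replaced by $\mathbb H^{2,n}_{\beta,T}$.

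Under Assumption~\ref{assump:gradient}, $V^{i,m}\in C^{1,2}$ solves \eqref{eq:HJB-best-response} with the same boundary data as $V^i$, and $\zeta^{i,m}=\bSigma^\top\mathrm D_{\x}V^{i,m}$. Set
\[
\hat Y^{i,m}_t\defeq V^{i,m}(t,\X_t),\qquad \hat Z^{i,m}_t\defeq \zeta^{i,m}(t,\X_t),
\]
and apply It\^o's formula along $\X$ up to $\tau$. Then $Y^i-\hat Y^{i,m}$ vanishes at $\tau$, because $V^i=V^{i,m}=g^i$ on $\partial Q_T$, so no stopping-time mismatch arises. The martingale integrand is exactly $\delta\bar\zeta^{i,m}$. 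The drift is
\[
\ell^{i,m}\big(s,\X_s,\hat Z^{i,m}_s,\bzeta^{-i,m-1}(s,\X_s)\big)+\big\langle \bb^{i,m}(\cdots)-\bb(s,\X_s,\bzeta(s,\X_s)),\mathrm D_{\x}V^{i,m}\big\rangle-f^i(s,\X_s,\bzeta(s,\X_s)).
\]
This term arises because $\X$ moves with the equilibrium drift $\bb$ rather than the player-$i$ drift.

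Next, apply It\^o to $e^{\beta s}|Y^i_s-\hat Y^{i,m}_s|^2$ and take expectations. The stochastic integral has zero mean by \eqref{eq:BDG2}, since $\hat Y$ is bounded: $V^{i,m}$ has bounded gradient and $Y^i$ is square integrable. This yields
\[
\|\delta \bar\zeta^{i,m}\|^2_{\mathbb H^{2,n}_{\beta,T}}+\beta\,\|Y^i-\hat Y^{i,m}\|^2_{\mathbb H^{2,1}_{\beta,T}}\le 2\,\E\!\int_0^\tau e^{\beta s}|Y^i_s-\hat Y^{i,m}_s|\,|D_s|\,\d s,
\]
where $D_s$ is the drift difference. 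I bound $|D_s|$ by splitting, exactly as in Lemma~\ref{lem:delta_bi_bound}, into a bar part and a tilde part.

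\begin{itemize}
\item The bar part replaces $\bal^{i,m}$ by $\bal^*$. It costs at most $C\sum_j\|\Delta^{j,m-1}(s,\X_s)\|$, with the constant given by $L_{\bar{\boldsymbol f}}(1+L_\kappa)$ and by $L_{\bar\bb}(1+L_\kappa)\MFP$.
\item The tilde part uses local (L), valid because \eqref{eq:grad_bound} keeps every $\z$-argument bounded. It costs at most $C\big(\|\delta\bar\zeta^{i,m}_s\|+\sum_{j\neq i}\|\delta\bar\zeta^{j,m-1}_s\|\big)$.
\end{itemize}

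Notice that every difference is evaluated at the common state $\X_s$, so no $\delta\X$ terms appear.

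Then apply Young's inequality with parameter $\gamma$. The $|Y-\hat Y|^2$ term is absorbed by $\beta\|\cdot\|^2$ once $\beta\gtrsim\gamma^{-1}$. The $\delta\bar\zeta^{i,m}$ contribution is absorbed on the left once $\gamma C<1/2$. What remains, for each $i$, is
\[
\|\delta\bar\zeta^{i,m}\|^2_{\beta}\le c(\beta)\Big(\sum_{j\neq i}\|\delta\bar\zeta^{j,m-1}\|^2_{\beta}+E^{m-1}_{\Delta,\beta}\Big),
\]
where $c(\beta)=O(\beta^{-1})$. This requires optimizing $\gamma\sim\beta^{-1/2}$: the coefficient of the $Y$ term is $C/\gamma$ and must stay $\le\beta$, which makes the cross terms $C\gamma\sim C\beta^{-1/2}$, so after squaring through Young the coefficient is of order $\beta^{-1}$. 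Summing over $i$ gives $E^m_{\zeta,\beta}\le (N-1)c(\beta)E^{m-1}_{\zeta,\beta}+Nc(\beta)E^{m-1}_{\Delta,\beta}$. Choosing $\beta_0$ so that $(N-1)c(\beta)<1$ for $\beta>\beta_0$ gives $\tilde K_{\Asup}(\beta)$ and $C^{\Asup}_\Delta(\beta)$.

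I expect the main obstacle to be this balancing. The self term $\delta\bar\zeta^{i,m}$ appears with an $O(1)$ Lipschitz constant multiplying $|Y-\hat Y|$. Taming it requires a large $\beta$ that controls $\|Y-\hat Y\|_{\mathbb H}$, and it must not leave an $O(1)$ multiple of $\|\delta\bar\zeta^{i,m}\|^2$ on the right-hand side. I would first try the weighted Young splitting above, tracking constants carefully. If $\beta$ alone does not make the $E^{m-1}_\zeta$ coefficient smaller than one, I would fall back on using the bounded domain/horizon $T$ in $\|\cdot\|_{\mathbb H}\le T\|\cdot\|_{\mathbb S}$.

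A secondary technical point is justifying It\^o's formula for $V^{i,m}(t,\X_t)$ up to the exit time, and the integrability of the local martingale. I would use the regularity $C^{1,2}(\bar Q_T)$ and localization.
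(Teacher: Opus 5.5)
Your proposal is correct and follows the paper's proof essentially step for step. The shared steps are: the auxiliary process $\hat Y^{i,m}_t=V^{i,m}(t,\X_t)$ along the equilibrium state (so the terminal values coincide and no $\delta\X$ terms arise), It\^o's formula for $e^{\beta t}|Y^i_t-\hat Y^{i,m}_t|^2$, the bar/tilde splitting of the driver with \eqref{eq:grad_bound} controlling $\langle \bb^{i,m}-\bb,\mathrm D_{\x}V^{i,m}\rangle$, absorption of the self term $\delta\bar\zeta^{i,m}$ for large $\beta$, and summation over $i$.

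The balancing you worry about is simpler than you make it. Young's inequality with weight $\beta$ (i.e.\ $\gamma=\beta^{-1}$, not $\gamma\sim\beta^{-1/2}$, which only gives $O(\beta^{-1/2})$) cancels the $\beta\|Y^i-\hat Y^{i,m}\|^2_{\mathbb H^{2,1}_{\beta,T}}$ term exactly. It then yields $\|\delta\bar\zeta^{i,m}\|^2_{\mathbb H^{2,n}_{\beta,T}}\le\beta^{-1}\|D\|^2_{\mathbb H^{2,1}_{\beta,T}}$ directly, which is the $O(1/\beta)$ rate the paper later relies on, so no fallback on $T$ is needed.
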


\begin{proof}

    Let $i \in \I$ and $m \geq 1$ be fixed. To facilitate the analysis, we introduce an auxiliary FBSDE whose stochastic integral has integrand $\zeta^{i,m}_t = \bSigma^\top \mathrm{D}_{\x} V^{i,m}(t,\X_t)$. 
For notational convenience, we define
\(
V^{i,0} \equiv 0,\) and
\(\zeta^{i,0} \equiv 0.
\)
We stress that $V^{i,0}$ is not obtained from a fictitious-play PDE. Since the initialization of fictitious play is given by the zero policy, the coefficients $b^{i,1}$ and $\ell^{i,1}$ are independent of $V^{i,0}$. The above definition is introduced solely to simplify the notation in the convergence analysis.

We introduce the notation
\begin{align*}
    \delta\hat\bb^{i,m}_t
    \defeq
    \bb\big(t,\X_t,\zeta^{i}_t,\bzeta^{-i}_t\big)
    -
    \bb^{i,m}\big(t,\X_t,\zeta^{i,m}_t,\bzeta^{-i,m-1}_t\big),
    \quad
    t\in[0,T],
\end{align*}
and let $\Gamma_t^{i,m}\defeq\mathrm{D}_x V^{i,m}(t,\X_t)$. Itô's lemma applied to $\hat Y^{i,m}_t \defeq V^{i,m}(t,\X_t)$ yields $\mathbb P$-almost surely
\begin{align*}  
    \hat Y_t^{i,m} 
    &=
    g^i(\tau,\X_\tau) 
    +
    \int_{t\wedge \tau}^\tau \ell^{i,m}\big(s,\X_s,\zeta^{i,m}_s,\bzeta^{-i,m-1}_s\big) \ds
    -
    \int_{t\wedge \tau}^\tau\big\langle \delta\hat\bb_s^{i,m},\Gamma_s^{i,m}\big\rangle \ds
    - 
    \int_{t \wedge \tau}^\tau(\zeta^{i,m}_s)^\top \d\W_s. 
\end{align*} 
For the corresponding difference process $\delta\hat Y^{i,m}_t:=Y^i_t-\hat Y^{i,m}_t$, we introduce
\begin{align*}
    \delta\hat\ell^{i,m}_t
    \defeq
    f^{i}\big(t,\X_t,\zeta^{i}_t,\bzeta^{-i}_t\big)
    -
    \ell^{i,m}\big(t,\X_t,\zeta^{i,m}_t,\bzeta^{-i,m-1}_t\big),
    \quad
    t\in[0,T].
\end{align*}
Then, for all $t \in [0,T]$, we have $\mathbb P$-almost surely
\begin{align*}
    \delta \hat Y_t^{i,m} 
    &=
    \int_{t\wedge \tau}^\tau \delta\hat\ell^{i,m}_s \ds
    +
    \int_{t\wedge \tau}^\tau\big\langle \delta\hat\bb_s^{i,m},\Gamma_s^{i,m}\big\rangle \ds 
    - 
    \int_{t \wedge \tau}^\tau(\delta\bar\zeta^{i,m}_s)^\top\d\W_s.
\end{align*} 
For $\beta>0$ we apply Itô's lemma to $(t,y) \mapsto e^{\beta t}|y|^2$ along $\delta\bar Y^{i,m}_t$ to obtain $\mathbb P$-almost surely
\begin{align*}
    \int_{0}^\tau\e^{\beta s}\big\|\delta\bar\zeta_s^{i,m}\big\|^2_{\R^n} \ds
    &=
    2\int_{0}^\tau \e^{\beta s}
            \delta \hat Y_s^{i,m}\delta \hat\ell^{i,m}_s \ds
            +
             2\int_{0}^\tau \e^{\beta s}
            \delta \hat Y_s^{i,m}\big\langle \delta\hat\bb_s^{i,m},\Gamma_s^{i,m}\big\rangle \ds \\
            &\quad
            -
            \beta\int_{0}^\tau\e^{\beta s}\big|\delta \hat Y_s^{i,m}\big|^2 \ds
            -
            2\int_{0}^\tau \e^{\beta s}\delta \hat Y_s^{i,m}\big(\delta\bar \zeta^{i,m}_s
            \big)^\top\d \W_s
            -
            |\delta\hat Y_{0}^{i,m}|^2 .
\end{align*}
Taking expectations, inserting absolute values, and dropping the non-positive $\delta\hat Y_{0}^{i,m}$-term, we get
\begin{align*}
     \Big\|\delta\bar\zeta^{i,m,[\tau]}\Big\|^2_{\mathbb H^{2,n}_{\beta,T}}\!
     &\leq 
    2\,\Big|
        \big\langle
            \delta \hat Y^{i,m},\delta \hat\ell^{i,m,[\tau]}
        \big\rangle_{\mathbb H^{2,1}_{\beta,T}}
    \Big|
        +
    2\,\Big|
        \big\langle
            \delta \hat Y^{i,m},
            \big\langle \delta\hat\bb^{i,m,[\tau]},\Gamma^{i,m}\big\rangle
        \big\rangle_{\mathbb H^{2,1}_{\beta,T}}
    \Big|
            -
    \beta\big\|\delta \hat Y^{i,m}\big\|^2_{\mathbb H^{2,1}_{\beta,T}}.
\end{align*}
For the first term, by Cauchy--Schwarz and Young's inequality with parameter $\beta/2$, we have
\begin{align*}
    2
    \Big|
        \big\langle
            \delta \hat Y^{i,m},\delta \hat\ell^{i,m,[\tau]}
        \big\rangle_{\mathbb H^{2,1}_{\beta,T}}
    \Big|
    \leq 
    2
    \big\| \delta \hat Y^{i,m} \big\|_{\mathbb H^{2,1}_{\beta,T}}
    \big\|\delta \hat\ell^{i,m,[\tau]} \big\|_{\mathbb H^{2,1}_{\beta,T}}
    \leq 
    \frac{\beta}{2}
    \big\| \delta \hat Y^{i,m} \big\|_{\mathbb H^{2,1}_{\beta,T}}^2
    +
    \frac{2}{\beta}
    \big\|\delta \hat\ell^{i,m,[\tau]} \big\|_{\mathbb H^{2,1}_{\beta,T}}^2.
\end{align*}
The second term is treated analogously. Combining the estimates and canceling the $\delta \hat Y^{i,m}$-terms yields
\begin{align}\label{eq:thre_terms_functional}
    \big\|\delta\bar\zeta^{i,m,[\tau]}\big\|^2_{\mathbb H^{2,n}_{\beta,T}}
    \leq
    \frac2\beta\Big(\big\|\delta \hat\ell^{i,m,[\tau]}\big\|^2_{\mathbb H^{2,1}_{\beta,T}}
            +
             \big\|\big\langle \delta\hat\bb^{i,m,[\tau]},\Gamma^{i,m}\big\rangle\big\|^2_{\mathbb H^{2,1}_{\beta,T}}\Big).
\end{align}
For the first term, note that we can split the coefficient difference as
\begin{align*}
    \big\|\delta \hat\ell^{i,m,[\tau]}\big\|^2_{\mathbb H^{2,1}_{\beta,T}} 
    \leq 
    2\big\|
        \delta \bar{\ell}^{i,m}
    \big\|^2_{\mathbb H^{2,1}_{\beta,T}}
    +
    2\big\|
        \ell^{i,m}\big(\cdot,\X_{\cdot},\zeta^{i},\bzeta^{-i}\big)
        -
        \ell^{i,m}\big(\cdot,\X_{\cdot},\zeta^{i,m},\bzeta^{-i,m-1}\big)
    \big\|^2_{\mathbb H^{2,1}_{\beta,T}}.
\end{align*}
The first part has already been bounded in the proof of Lemma~\ref{lem:delta_bi_bound}, such that
\begin{align*}
    \big\|
        \delta \bar{\ell}^{i,m}
    \big\|^2_{\mathbb H^{2,1}_{\beta,T}}
    \leq
    L_{\bar{\boldsymbol{f}}} (1 + L_\kappa)
    E_{\Delta,\beta}^{m-1}.
\end{align*}
For the second part, we use the Lipschitz continuity of $\ell^{i,m}$, which gives
\begin{align*}
    \big\|
        \ell^{i,m}\big(\cdot,\X_{\cdot},\zeta^{i},\bzeta^{-i}\big)
        -
        \ell^{i,m}\big(\cdot,\X_{\cdot},\zeta^{i,m},\bzeta^{-i,m-1}\big)
    \big\|^2_{\mathbb H^{2,1}_{\beta,T}}
      &\leq
      L_\ell^{\FP}\bigg(  \big\|\delta \bar\zeta^{i,m,[\tau]}\big\|^2_{\mathbb H^{2,n}_{\beta,T}}
      +
      \sum_{j\in\I\setminus\{i\}}\big\|\delta \bar\zeta^{j,m-1,[\tau]}\big\|^2_{\mathbb H^{2,n}_{\beta,T}}\bigg).
\end{align*}
In total, this gives the estimate
\begin{align}\label{eq:lemma36_first_term}
    \big\|\delta \hat\ell^{i,m,[\tau]}\big\|^2_{\mathbb H^{2,1}_{\beta,T}} 
    \leq
     2e^{\beta T}L_{\bar{\boldsymbol{f}}} (1 + L_\kappa) E_\Delta^{m-1}
     +
     2 L_\ell^{\FP}\bigg(  \big\|\delta \bar\zeta^{i,m,[\tau]}\big\|^2_{\mathbb H^{2,n}_{\beta,T}}
      +
      \sum_{j\in\I\setminus\{i\}}\big\|\delta \bar\zeta^{j,m-1,[\tau]}\big\|^2_{\mathbb H^{2,n}_{\beta,T}}\bigg)
\end{align}
For the second term in~\eqref{eq:thre_terms_functional}, we use Cauchy--Schwarz and the uniform boundedness~\eqref{eq:grad_bound}, which yields
\begin{align}
\begin{split}
    \big\|\big\langle \delta\hat \bb^{i,m,[\tau]}, \Gamma^{i,m}\big\rangle\big\|^2_{\mathbb H^{2,1}_{\beta,T}}
&=
\E\bigg[\int_0^T e^{\beta t}\big |\big\langle \delta\hat \bb^{i,m,[\tau]}_t, \Gamma_t^{i,m}\big\rangle\big|^2\dt\bigg]\\
&\leq
\E\bigg[\int_0^T e^{\beta t}\big\| \delta\hat \bb^{i,m,[\tau]}_t\big\|^2_{\R^n} \big\|\Gamma_t^{i,m} \big\|^2_{\R^n}\dt\bigg]\\
&\leq M^{\FP}
\big\|\delta\hat \bb^{i,m,[\tau]}\big\|^2_{\mathbb H^{2,n}_{\beta,T}}.
\end{split}
\label{eq:lemma36_second_term}
\end{align}
Using a similar split as for $\delta\hat{\ell}^{i,m}$ and using the same arguments, we have the bound
\begin{align*}
    \big\|\delta\hat \bb^{i,m,[\tau]}\big\|^2_{\mathbb H^{2,n}_{\beta,T}}
    \leq 
    2 e^{\beta T} L_{\bar{\bb}} (1+L_\kappa) E_\Delta^{m-1}
    +
    2L_b^{\FP}\bigg(  \big\|\delta \bar\zeta^{i,m,[\tau]}\big\|^2_{\mathbb H^{2,n}_{\beta,T}}
    +
    \sum_{j\in\I\setminus\{i\}}\big\|\delta \bar\zeta^{j,m-1,[\tau]}\big\|^2_{\mathbb H^{2,n}_{\beta,T}}\bigg).
\end{align*}
Inserting~\eqref{eq:lemma36_first_term} and~\eqref{eq:lemma36_second_term} into~\eqref{eq:thre_terms_functional}, summing over $i\in\I$ and rearranging terms, we have
\begin{align*}
    E^m_{\zeta, \beta}
    \leq
     C^{\Asup}_\Delta(\beta) E_{\Delta,\beta}^{m-1}
    +
     \tilde{K}_{\Asup}(\beta) E_{\zeta,\beta}^{m-1},
\end{align*}
where
\begin{align*}
    \begin{split}
        C^{\Asup}_\Delta(\beta) 
        &\defeq 
        \frac{4N(1+L_\kappa)(L_{\bar{\boldsymbol{f}}} + M^{\FP}L_{\bar{\bb}})}{\beta}, \\
        \tilde{K}_{\Asup}(\beta)
        &\defeq
        \frac{2(N-1)(L_\ell^{\FP} + M^{\FP} L_b^{\FP})}{\beta - 2(L_\ell^{\FP} +\MFP L_b^{\FP})}.
    \end{split}
\end{align*}
This completes the proof.
\end{proof}

\begin{lemma}\label{lemma:functional_error_unbounded}
    Consider the setting in Assumption~\ref{assump:smalltime}. For all $m \geq 1$, there exist $\tilde K_{\Bsup}(\beta) \in (0,1)$ and $C^{\Bsup}_{\Delta}(\beta) < \infty$ such that
    \begin{align*}
        E_{\zeta,\beta}^{m} \leq \tilde{K}_{\Bsup}(\beta) E_{\zeta,\beta}^{m-1} + C^{\Bsup}_{\Delta}(\beta) E_{\Delta,\beta}^{m-1}.
    \end{align*}
\end{lemma}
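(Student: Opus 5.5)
Under Assumption~\ref{assump:smalltime} there is no gradient bound, so the auxiliary process $\hat Y^{i,m}=V^{i,m}(t,\X_t)$ from the proof of Lemma~\ref{lem:functional_error_bounded} is not available. I would instead work directly with the fictitious-play triple $(\X^{i,m},Y^{i,m},Z^{i,m})$ and the error pair $(\delta Y^{i,m},\delta\zeta^{i,m})$. Since $\D=\R^n$, Proposition~\ref{prop:stoppin_time_L1} gives $\tau=\tau^{i,m}=T$, so no stopping-time terms arise. The price is that the forward error $\delta\X^{i,m}$ now enters the estimate. This is the origin of the constant $C$ from Lemma~\ref{lemma:delta_x_bound} in $C_\beta$ and of the exponential factor $e^{\beta T}$.

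\textbf{Step 1: weighted Itô estimate.} Apply Itô's formula to $e^{\beta t}|\delta Y^{i,m}_t|^2$ on $[0,T]$ and take expectations. Bound the cross term with Young's inequality at parameter $\beta/2$, exactly as in~\eqref{eq:thre_terms_functional}. This gives
\begin{align*}
\big\|\delta\zeta^{i,m}\big\|^2_{\mathbb H^{2,n}_{\beta,T}}
\le e^{\beta T}\big\|\delta g^{i,m}\big\|^2_{\mathbb L^{2,1}}+\frac{2}{\beta}\big\|\delta\ell^{i,m}\big\|^2_{\mathbb H^{2,1}_{\beta,T}}.
\end{align*}
The terminal term is at most $L_g\|\delta\X^{i,m}_T\|^2_{\mathbb L^{2,n}}$.

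\textbf{Step 2: bound $\delta\ell^{i,m}$.} Split $\delta\ell^{i,m}=\delta\bar\ell^{i,m}+\delta\tilde\ell^{i,m}$ as in Lemma~\ref{lem:delta_bi_bound}.
\begin{itemize}
\item The $\delta\bar\ell$ part contributes $L_{\bar\f}(1+L_\kappa)E^{m-1}_{\Delta,\beta}$, which produces the first summand of $C_\beta'$.
\item The $\delta\tilde\ell$ part is handled with the global Lipschitz condition~(L) together with Lemma~\ref{lemma:delta_zeta_decomposition}. It is bounded by $2L^{\FP}_\ell$ times a combination of $\|\delta\zeta^{i,m}\|^2$, $\sum_{j\ne i}\|\delta\bar\zeta^{j,m-1}\|^2$, and $(1+2L^{\FP}_\zeta N)\|\delta\X^{i,m}\|^2_{\mathbb H}$.
\end{itemize}
For the state error, bound $\|\delta\X^{i,m}\|^2_{\mathbb H}\le T\|\delta\X^{i,m}\|^2_{\mathbb S}$ and then apply Lemma~\ref{lemma:delta_x_bound} with explicit constant $C$. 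Tracking the weights, it should become
\begin{align*}
Ce^{\beta T}\Big(\|\delta\bar\zeta^{i,m}\|^2_{\mathbb H^{2,n}_{\beta,T}}+\sum_{j\ne i}\|\delta\bar\zeta^{j,m-1}\|^2_{\mathbb H^{2,n}_{\beta,T}}+E^{m-1}_{\Delta,\beta}\Big).
\end{align*}
The terms $L_g$, $2L_\ell^{\FP}T(1+2L_\zeta^{\FP}N)/\beta$ and $L_\zeta^{\FP}T$ in $C_\beta$ come from passing from $\delta\zeta^{i,m}$ back to $\delta\bar\zeta^{i,m}$ via~\eqref{eq:split}.

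\textbf{Step 3: absorb and sum.} Collecting the pieces should give
\begin{align*}
\|\delta\bar\zeta^{i,m}\|^2_{\mathbb H^{2,n}_{\beta,T}}\le 2C_\beta\Big(\|\delta\bar\zeta^{i,m}\|^2_{\mathbb H^{2,n}_{\beta,T}}+\sum_{j\ne i}\|\delta\bar\zeta^{j,m-1}\|^2_{\mathbb H^{2,n}_{\beta,T}}\Big)+2C'_\beta E^{m-1}_{\Delta,\beta}.
\end{align*}
Since $2C_\beta<1$, the first term can be moved to the left-hand side. Summing over $i\in\I$ then yields the claim with
\begin{align*}
\tilde K_{\Bsup}(\beta)=\frac{2C_\beta(N-1)}{1-2C_\beta}<1,\qquad C^{\Bsup}_\Delta(\beta)=\frac{2C'_\beta N}{1-2C_\beta}.
\end{align*}
Both constants match the expressions that appear in Assumption~\ref{assump:smalltime}.

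\textbf{Main obstacle.} The delicate point is the bookkeeping in Step~2. The constant in Lemma~\ref{lemma:delta_x_bound} comes from Gronwall and is not small. Smallness of $\tilde K_{\Bsup}$ therefore has to come from the hypothesis $2C_\beta<1$, which in this regime is a small-$T$ or small-Lipschitz condition rather than a large-$\beta$ one. I would also check that Lemma~\ref{lemma:delta_x_bound} can be restated in the $\beta$-weighted norms at a cost of exactly $e^{\beta T}$, using~\eqref{eq:HHH}.
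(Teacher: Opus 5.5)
Your proposal follows the paper's proof. The ingredients are the same: the weighted It\^o estimate for $e^{\beta t}|\delta Y^{i,m}_t|^2$ with $\tau=\tau^{i,m}=T$; the terminal term bounded by $L_g\|\delta\X^{i,m}\|^2_{\mathbb S^{2,n}_{\beta,T}}$; the split $\delta\ell^{i,m}=\delta\bar\ell^{i,m}+\delta\tilde\ell^{i,m}$ together with Lemma~\ref{lemma:delta_zeta_decomposition}; the passage from $\delta\zeta^{i,m}$ to $\delta\bar\zeta^{i,m}$ via \eqref{eq:split}; Lemma~\ref{lemma:delta_x_bound} combined with \eqref{eq:HHH}, which is where $Ce^{\beta T}$ enters; absorption using $2C_\beta<1$; and summation over $i$.

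One step does not give what you claim, and it matters because Assumption~\ref{assump:smalltime} is a quantitative condition on exactly $C_\beta$ and $C_\beta'$. In Step~1 you take the Young parameter $\beta/2$ from \eqref{eq:thre_terms_functional}. In Lemma~\ref{lem:functional_error_bounded} that choice was forced, because two cross terms had to share the single term $-\beta\|\delta\hat Y^{i,m}\|^2_{\mathbb H^{2,1}_{\beta,T}}$. Here there is only the one cross term $2\,\delta Y^{i,m}_s\,\delta\ell^{i,m}_s$. With $\beta/2$ you get the factor $2/\beta$ in front of $\|\delta\ell^{i,m}\|^2_{\mathbb H^{2,1}_{\beta,T}}$. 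Carrying this through produces $8L_\ell^{\FP}/\beta+Ce^{\beta T}\big(L_g+4L_\ell^{\FP}T(1+2L_\zeta^{\FP}N)/\beta+L_\zeta^{\FP}T\big)$ in place of $C_\beta$, with the analogous doubling of the $1/\beta$ terms in $C_\beta'$. These constants are strictly larger, so Assumption~\ref{assump:smalltime} no longer guarantees $2C_\beta<1$ or $\tilde K_{\Bsup}(\beta)<1$ for them. Your assertion that the constants match the assumption is therefore false as written.

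The repair is to use Young with parameter $\beta$. This cancels $-\beta|\delta Y^{i,m}_s|^2$ exactly and gives $\|\delta\zeta^{i,m}\|^2_{\mathbb H^{2,n}_{\beta,T}}\le\E[e^{\beta T}|\delta g^{i,m}|^2]+\beta^{-1}\|\delta\ell^{i,m}\|^2_{\mathbb H^{2,1}_{\beta,T}}$, as in the paper. With that, your bookkeeping reproduces $C_\beta$ and $C_\beta'$ exactly.

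A smaller point: your attribution of the constants in Step~2 is off. Only the $L_\zeta^{\FP}T$ term comes from applying \eqref{eq:split} to $\delta\zeta^{i,m}$. The $L_g$ term comes from the terminal condition. The term $2L_\ell^{\FP}T(1+2L_\zeta^{\FP}N)/\beta$ is the state dependence of $\delta\tilde\ell^{i,m}$, namely its $\x$-argument plus the $N$ applications of Lemma~\ref{lemma:delta_zeta_decomposition}.
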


\begin{proof}
    We apply a classical FBSDE analysis based on the smallness condition in Assumption~\ref{assump:smalltime}. Let $i\in \I$ and $m \geq 1$ be fixed. By the decomposition~\eqref{eq:split} and the inequality~\eqref{ineq:sumsq}, we have
\begin{align}
    \big\|\delta\bar\zeta^{i,m}\big\|^2_{\mathbb H^{2,n}_{\beta,T}}
    &\leq
    2\,\big\|\delta\zeta^{i,m}\big\|^2_{\mathbb H^{2,n}_{\beta,T}}
    +
    2\,
     \big\|\delta\tilde\zeta^{i,m}\big\|^2_{\mathbb H^{2,n}_{\beta,T}}. \label{eq:unbounded_lemma_two_terms}
\end{align}
We estimate the two terms on the right-hand side of~\eqref{eq:unbounded_lemma_two_terms} separately. For the first term, recall that $\delta Y^{i,m} = Y^i-Y^{i,m}$ satisfies for all $t\in[0,T]$, $\P$-almost surely
\begin{align*}
    \delta Y_t^{i,m}
    =
    \delta g^{i,m}
    + \int_t^T \delta \ell_s^{i,m} \ds
    - \int_t^T \big(\delta \zeta_s^{i,m}\big)^\top \,\d\W_s .
\end{align*}
For $\beta > 0$, we apply It\^o's lemma to $(t,y)\mapsto e^{\beta t}\lvert y\rvert^2$ along $\delta Y_t^{i,m}$, which gives $\mathbb P$-almost surely
\begin{align*}
    \int_0^T\e^{\beta s}\big\| \delta \zeta_s^{i,m}\big\|_{\R^n}^2\ds
    &=
    \e^{\beta T}\big| \delta g^{i,m}\big|^2
    + \int_0^T \e^{\beta s}\big(
        2\,\delta Y_s^{i,m}\,\delta \ell_s^{i,m}
        - \beta \big| \delta Y_s^{i,m}\big|^2
    \big)\,\d s \\
    &\quad
    - 2\int_0^T \e^{\beta s}\,\delta Y_s^{i,m}\,\big(\delta \zeta_s^{i,m}\big)^\top \,\d\W_s
    -
    \big| \delta Y_0^{i,m}\big|^2.
\end{align*}
Applying Young's inequality to $2\,\delta Y_s^{i,m}\,\delta \ell_s^{i,m}$ with parameter $\beta$, and discarding the final (non-positive) term, we get
\begin{align*}
    \int_0^T\e^{\beta s}\big\| \delta \zeta_s^{i,m}\big\|_{\R^n}^2 \ds
    \leq
    \e^{\beta T}\big| \delta g^{i,m}\big|^2
    + \frac1\beta\int_0^T \e^{\beta s}\big| \delta \ell_s^{i,m}\big|^2 \ds
    - 2\int_0^T \e^{\beta s}\,\delta Y_s^{i,m}\,
    \big(\delta \zeta_s^{i,m}\big)^\top \,\d\W_s .
\end{align*}
Since $\delta Y^{i,m} \in \mathbb S^{2,1}_{\beta,T}$ and $\delta \zeta^{i,m} \in \mathbb H^{2,n}_{\beta,T}$, we have $\delta Y^{i,m}\delta \zeta^{i,m} \in \mathbb H^{1,n}_{\beta,T}$ and by~\eqref{eq:BDG2} the Itô integral has zero expectation. Taking expectations on both sides, we obtain
\begin{align}\label{eq:delta_zeta_1_new}
    \big\| \delta \zeta^{i,m}\big\|_{\mathbb H^{2,n}_{\beta,T}}^2
    &\leq
    \E\Big[\e^{\beta T}\big| \delta g^{i,m}\big|^2\Big]
    + \frac1\beta\big\| \delta \ell^{i,m}\big\|_{\mathbb H^{2,1}_{\beta,T}}^2 .
\end{align}
By the Lipschitz continuity of $g$, we have
\begin{align*}
    \E\Big[\e^{\beta T}\big| \delta g^{i,m}\big|^2\Big]
    &\leq 
    L_g \, 
    \E\Big[e^{\beta T}\big\|\delta\X^{i,m}_T\big\|_{\R^n}^2\Big]
    \leq
    L_g \E\bigg[\sup_{t\in[0,T]}
    e^{\beta t}\big\|\delta\X^{i,m}_t\big\|_{\R^n}^2\bigg]
    =
    L_g \, \big\|\delta\X^{i,m}\big\|_{\mathbb S^{2,n}_{\beta,T}}^2.
\end{align*}
For the second term of~\eqref{eq:delta_zeta_1_new}, we again split the error $\delta \ell^{i,m}_t = \delta\bar{\ell}^{i,m}_{t} + \delta\tilde{\ell}^{i,m}_{t}$ and get
\begin{align*}
    \big\|
        \delta \bar{\ell}^{i,m}
    \big\|^2_{\mathbb H^{2,1}_{\beta,T}}
    \leq
    L_{\bar{\boldsymbol{f}}} (1 + L_\kappa)
    E_{\Delta,\beta}^{m-1},
\end{align*}
by repeating calculations from Lemma~\ref{lem:delta_bi_bound}. Moreover, by the Lipschitz continuity of $\ell^{i,m}$, the norm inequality~\eqref{eq:HS2}, and Lemma~\ref{lemma:delta_zeta_decomposition}, we have
\begin{align*}
    \big\| \delta\tilde{\ell}^{i,m} \big\|^2_{\mathbb{H}^{2,1}_{\beta,T}}
    &\leq
    L_\ell^{\FP}\bigg(
        T\big\|\delta\X^{i,m}\big\|_{\mathbb S^{2,n}_{\beta,T}}^2
        +
        \big\|\delta \zeta^{i,m}\big\|_{\mathbb H^{2,n}_{\beta,T}}^2
        +
        \sum_{j\in\I\setminus\{i\}}
        \big\|\delta \zeta^{j,i,m-1,m}\big\|_{\mathbb H^{2,n}_{\beta,T}}^2
    \bigg)\\
    &\leq 
    L_\ell^{\FP} T(1 + 2NL_\zeta^{\FP} ) 
        \big\|
          \delta \X^{i,m}
        \big\|_{\mathbb S^{2,n}_{\beta,T}}^2
    +
        2L_\ell^{\FP}
        \big\|
          \delta \bar\zeta^{i,m}
        \big\|_{\mathbb H^{2,n}_{\beta,T}}^2
        +
        2L_\ell^{\FP}
        \sum_{j\in\I\setminus\{i\}}
        \big\|
          \delta \bar \zeta^{j,m-1}
        \big\|_{\mathbb H^{2,n}_{\beta,T}}^2.
\end{align*}
Hence, using~\eqref{ineq:sumsq}, we get
\begin{align*}
    \big\| \delta \ell^{i,m}\big\|_{\mathbb H^{2,1}_{\beta,T}}^2
    &\leq
    2L_\ell^{\FP} T(1 + 2NL_\zeta^{\FP} ) 
    \big\|
      \delta \X^{i,m}
    \big\|_{\mathbb S^{2,n}_{\beta,T}}^2
    +
    4L_\ell^{\FP}
    \big\|
      \delta \bar\zeta^{i,m}
    \big\|_{\mathbb H^{2,n}_{\beta,T}}^2
    \\
    &\qquad+
    4L_\ell^{\FP}
    \sum_{j\in\I\setminus\{i\}}
    \big\|
      \delta \bar \zeta^{j,m-1}
    \big\|_{\mathbb H^{2,n}_{\beta,T}}^2
    +
    2 L_{\bar{\boldsymbol{f}}} (1+L_\kappa) E_{\Delta,\beta}^{m-1}.
\end{align*}
For the second term in~\eqref{eq:unbounded_lemma_two_terms}, the Lipschitz continuity of $\zeta^{i,m}$ and \eqref{eq:HS2} yield
\begin{align}\label{eq:delta_zeta_2_new}
     \big\|\delta\tilde\zeta^{i,m}\big\|^2_{\mathbb H^{2,n}_{\beta,T}}
     \leq
    L_\zeta^{\FP} T
     \big\|\delta\X^{i,m}\big\|^2_{\mathbb S^{2,n}_{\beta,T}}.
\end{align}
Inserting~\eqref{eq:delta_zeta_1_new} and~\eqref{eq:delta_zeta_2_new} into~\eqref{eq:unbounded_lemma_two_terms} gives
\begin{equation}
\label{eq:smallness_ineq}
\begin{split}
\bigg(\frac{1}{2}-\frac{4L_\ell^{\FP}}\beta \bigg)
    \big\| \delta \bar \zeta^{i,m}\big\|_{\mathbb H^{2,n}_{\beta,T}}^2
    &\leq
     \bigg(L_g+\frac{2L_\ell^{\FP} T(1+2L_\zeta^{\FP}  N)}\beta +L_\zeta^{\FP}  T\bigg)
        \big\|\delta\X^{i,m}\big\|_{\mathbb S^{2,n}_{\beta,T}}^2
    \\
    &\qquad
    +
   \frac{4L_\ell^{\FP} }{\beta}\sum_{j\in\I\setminus\{i\}}
        \big\|\delta \bar \zeta^{j,m-1}\big\|_{\mathbb H^{2,n}_{\beta,T}}^2
    +
    \frac{2L_{\bar{\boldsymbol{f}}} (1+L_\kappa)}{\beta} E^{m-1}_{\Delta,\beta}.
\end{split}
\end{equation}
Since $\delta\tau^{i,m}=0$ $\P$-almost surely in the unbounded domain setting, the norm equivalence~\eqref{eq:HHH} followed by Lemma~\ref{lemma:delta_x_bound} gives
\begin{equation}\label{eq:dzetacalc4}
\begin{split}
&\bigg( \frac{1}{2} - C_\beta \bigg)
    \big\| \delta \bar \zeta^{i,m}\big\|_{\mathbb H^{2,n}_{\beta,T}}^2
    \leq
       C_\beta
       \sum_{j\in\I\setminus\{i\}}
        \big\|\delta \bar \zeta^{j,m-1}\big\|_{\mathbb H^{2,n}_{\beta,T}}^2
        +
        C_{\beta}' E_{\Delta,\beta}^{m-1},
\end{split}
\end{equation}
where
\begin{align*}
C_\beta
&\defeq\frac{4L_\ell^{\FP}}\beta
       +Ce^{\beta T}
       \bigg(L_g+\frac{2L_\ell^{\FP} T(1+2L_\zeta^{\FP} N)}\beta
       +
       L_\zeta^{\FP} T\bigg),
\\
C_\beta'
&\defeq
\frac{2 L_{\bar{\boldsymbol{f}}} (1+L_\kappa)}{\beta}
       +Ce^{\beta T}
       \bigg(L_g+\frac{2L_\ell^{\FP} T(1+2L_\zeta^{\FP} N)}\beta
       +
       L_\zeta^{\FP} T\bigg).
\end{align*}
Now define $\tilde{K}_{\Bsup}(\beta) \defeq \frac{2C_\beta}{1-2C_\beta}(N-1)$ and $C^{\Bsup}_{\Delta}(\beta) \defeq \frac{2C_\beta' N}{1-2C_\beta}$. By Assumption~\ref{assump:smalltime}, it holds that $\tilde{K}_{\Bsup}(\beta) < 1$. Following rearrangement of terms and summation over $i \in \I$, we have
\begin{align*}
    \begin{split}
        E_{\zeta,\beta}^{m} \leq \tilde{K}_{\Bsup}(\beta) E_{\zeta,\beta}^{m-1} + C^{\Bsup}_{\Delta}(\beta) E_{\Delta,\beta}^{m-1}.
    \end{split}
    \qedhere
\end{align*}
\end{proof}

\begin{lemma}\label{lem:policy_error}
    For every $m \geq 1$, there exist $C_\zeta(\eta) < \infty$ and $\rho(\eta) < 1$ such that the policy error satisfies
    \begin{align}
        E_{\Delta,\beta}^{m}
        \leq
        \rho(\eta) E_{\Delta,\beta}^{m-1}
        +
        C_\zeta(\eta)
        E_{\zeta,\beta}^{m-1}.
    \end{align}
\end{lemma}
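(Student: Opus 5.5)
We work pointwise in $(t,\x)$, evaluate along $\X_t$, and then integrate in the weighted norm $\mathbb H^{2,d}_{\beta,T}$. The starting point is the fixed-point identity \eqref{eq:fp2} for the Nash policy. Since $\zeta^i=\bSigma^\top \mathrm D_{\x}V^i$, we have $\bPhi(t,\x)\zeta^i(t,\x)=\mathrm D_{\x}V^i(t,\x)$, and therefore
\[
\alpha^{*,i}(t,\x)=\kappa^i\big(t,\x,\bPhi(t,\x)\zeta^i(t,\x),\bal^{*,-i}(t,\x)\big).
\]
On the other side, the definition of $\kappa^{i,m}$ gives
\[
\kappa^{i,m}\big(t,\x,\zeta^i(t,\x)\big)=\kappa^i\big(t,\x,\bPhi\zeta^i,\boldsymbol\kappa^{-i,m-1}(t,\x,\bzeta^{-i,m-1}(t,\x))\big).
\]
The two expressions share the same $(t,\x,p)$-arguments. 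The Lipschitz bound \eqref{eq:kappai_lipschitz} therefore involves only the $\ba$-argument:
\[
\big\|\Delta^{i,m}(t,\x)\big\|^2_{\R^d}\le L_\kappa^a\sum_{j\ne i}\big\|\alpha^{*,j}(t,\x)-\kappa^{j,m-1}\big(t,\x,\zeta^{j,m-1}(t,\x)\big)\big\|^2_{\R^d}.
\]

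Next, each summand is split by inserting $\kappa^{j,m-1}(t,\x,\zeta^j(t,\x))$. This gives
\[
\alpha^{*,j}-\kappa^{j,m-1}(\zeta^{j,m-1}) = \Delta^{j,m-1} + \big(\kappa^{j,m-1}(\zeta^j)-\kappa^{j,m-1}(\zeta^{j,m-1})\big).
\]
The squared norm is then estimated with the weighted Young inequality $\|a+b\|^2\le(1+\eta)\|a\|^2+(1+\eta^{-1})\|b\|^2$. The second piece is bounded using the $z$-Lipschitz constant $L_\kappa^{\FP}$ from (L), which gives $L_\kappa^{\FP}\|\delta\bar\zeta^{j,m-1}\|^2$. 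We then evaluate at $\x=\X_t$, multiply by $e^{\beta t}$, integrate, take expectations, and sum over $i\in\I$. Each index $j$ appears at most $N-1$ times. This yields
\[
E^m_{\Delta,\beta}\le (N-1)L_\kappa^a\Big((1+\eta)E^{m-1}_{\Delta,\beta}+(1+\eta^{-1})L_\kappa^{\FP}E^{m-1}_{\zeta,\beta}\Big).
\]

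The main obstacle is the factor $N-1$ coming from the double sum. As stated, Assumption \ref{assump:smalltime} only guarantees $(1+\eta)L_\kappa^a<1$, not $(1+\eta)(N-1)L_\kappa^a<1$. To avoid this loss I would not use the per-player bound \eqref{eq:kappai_lipschitz} on each $i$ separately. Instead, I would apply the joint Lipschitz condition on $\boldsymbol\kappa$ to all players at once: the bound for $\tnorm{\boldsymbol\kappa(\cdot,\ba_1)-\boldsymbol\kappa(\cdot,\ba_2)}^2$ has $L_\kappa^a\tnorm{\ba_1-\ba_2}^2$ with no $N$-dependence. This requires writing the stacked vector $(\Delta^{i,m})_{i\in\I}$ as a difference of $\boldsymbol\kappa$ evaluated at two joint action profiles. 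Because player $i$'s own entry does not enter $\kappa^i$, one can take the profiles $\bal^*$ and $(\kappa^{j,m-1}(\zeta^{j,m-1}))_j$. Then $\sum_i\|\Delta^{i,m}\|^2\le L_\kappa^a\sum_j\|\alpha^{*,j}-\kappa^{j,m-1}(\zeta^{j,m-1})\|^2$.

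With this, we get $\rho(\eta)=(1+\eta)L_\kappa^a<1$ and $C_\zeta(\eta)=(1+\eta^{-1})L_\kappa^aL_\kappa^{\FP}$, which match the constants appearing in Assumption \ref{assump:smalltime}. The case $m=1$ is handled with the conventions $\kappa^{j,0}\equiv0$ and $\zeta^{j,0}\equiv0$, and the same computation goes through.
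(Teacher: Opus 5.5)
Your proof is correct and follows the paper's argument. The steps are the same: the Nash fixed-point identity, the $\boldsymbol a$-contraction with identical $(t,\x,p)$ arguments, inserting $\kappa^{j,m-1}(t,\x,\zeta^j)$ with the weighted Young inequality, the $z$-Lipschitz bound with constant $L_\kappa^{\FP}$, and integrating along $\X_t$. You obtain the same constants $\rho(\eta)=(1+\eta)L_\kappa^a$ and $C_\zeta(\eta)=(1+\eta^{-1})L_\kappa^aL_\kappa^{\FP}$.

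Your point about the factor $N-1$ is also well founded. The paper cites the per-player bound \eqref{eq:kappai_lipschitz} for its summed inequality, but that inequality without the loss actually requires the joint Lipschitz condition on $\boldsymbol\kappa$, applied to the profiles $\bal^*(t,\x)$ and $(\kappa^{j,m-1}(t,\x,\zeta^{j,m-1}(t,\x)))_{j\in\I}$, exactly as you do.
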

\begin{proof}
    Fix $m \geq 1$. For $(t,\x) \in Q_T$, we have by the Nash fixed-point relation,
    \begin{align*}
        \alpha^{*,i}(t,\x)
        = 
        \kappa^i
        \big(
            t,
            \x,
            \bPhi(t,\x) \zeta^i(t,\x),
            \bal^{*,-i}(t,\x)
        \big).
    \end{align*}
    By the definition of $\kappa^{i,m}$ and the contraction property~\eqref{eq:kappai_lipschitz}, we have
    \begin{align*}
        \sum_{i \in \I}
        \big\|
            \Delta^{i,m}(t,\x)
        \big\|_{\R^d}^2
        \leq
        L_{\kappa}^a
        \sum_{i \in \I}
        \big\|
            \alpha^{*,i}(t,\x)
            -
            \kappa^{i,m-1}(t,\x,\zeta^{i,m-1}(t,\x))
        \big\|_{\R^d}^2.
    \end{align*}
    For each $i \in \I$, add and subtract $\kappa^{i,m-1}(t,\x,\zeta^{i}(t,\x))$. By Young's inequality, for any $\eta > 0$, we have
    \begin{align*}
        \sum_{i \in \I}
        \big\|
            \Delta^{i,m}(t,\x)
        \big\|_{\R^d}^2
        &\leq
        (1+\eta)L_{\kappa}^a
        \sum_{i \in \I}
        \big\|
            \Delta^{i,m-1}(t,\x)
        \big\|_{\R^d}^2
        \\
        &\qquad
        +
        \big( 1 + \eta^{-1} \big) L_{\kappa}^a
        \sum_{i \in \I}
        \big\|
            \kappa^{i,m-1}(t,\x,\zeta^i(t,\x))
            -
            \kappa^{i,m-1}(t,\x,\zeta^{i,m-1}(t,\x))
        \big\|_{\R^d}^2.
    \end{align*}
    For the final sum, we apply the uniform Lipschitz continuity of $\kappa^{i,m-1}$, such that
    \begin{align*}
        \sum_{i \in \I}
        \big\|
            \Delta^{i,m}(t,\x)
        \big\|_{\R^d}^2
        &\leq
        (1+\eta)L_{\kappa}^a
        \sum_{i \in \I}
        \big\|
            \Delta^{i,m-1}(t,\x)
        \big\|_{\R^d}^2
        +
        \big( 1 + \eta^{-1} \big) L_{\kappa}^aL_{\kappa}^{\FP}
        \sum_{i \in \I}
        \big\|
            \zeta^i(t,\x)
            -
            \zeta^{i,m-1}(t,\x)
        \big\|_{\R^n}^2.
    \end{align*}
    Evaluating at $\x = \X_t$, multiplying by $e^{\beta t}$, integrating over $[0,T]$, and taking expectation gives
    \begin{align*}
        E^m_{\Delta,\beta}
        \leq
        (1+\eta)L_{\kappa}^a E_{\Delta,\beta}^{m-1}
        +
        \big( 1 +\eta^{-1} \big) L^a_{\kappa}L_{\kappa}^{\FP}
        E_{\zeta,\beta}^{m-1}.
    \end{align*}
    Since $L^a_\kappa < 1$, we can choose $\eta > 0$ such that
    \begin{align*}
        \rho(\eta) := (1 + \eta) L_\kappa^a < 1.
    \end{align*}
    With $C_\zeta(\eta) := (1+\eta^{-1})L_\kappa^a L_{\kappa}^{\FP}$, we obtain
    \begin{align*}
        \begin{split}
            E_{\Delta,\beta}^{m}
            \leq
            \rho(\eta) E_{\Delta,\beta}^{m-1}
            +
            C_\zeta(\eta)
            E_{\zeta,\beta}^{m-1}.
        \end{split}
         \qedhere
    \end{align*}
\end{proof}

\begin{lemma}\label{lem:spectral_radius}
    The matrices
    \begin{align*} 
        M_{\Asup}
        \defeq
        \begin{bmatrix}
            \tilde{K}_{\Asup}(\beta)  & C_\Delta^{\Asup}(\beta)
            \\
            C_\zeta(\eta) & \rho(\eta)
        \end{bmatrix},
        \quad
        \text{ and }
        \quad
        M_{\Bsup}
        \defeq
        \begin{bmatrix}
            \tilde{K}_{\Bsup}(\beta)  & C_\Delta^{\Bsup}(\beta)
            \\
            C_\zeta(\eta) & \rho(\eta)
        \end{bmatrix},
    \end{align*}
    under Assumption~\ref{assump:gradient} and Assumption~\ref{assump:smalltime}, respectively, have spectral radius $r(M_{\Asup}), r(M_{\Bsup}) < 1$. 
\end{lemma}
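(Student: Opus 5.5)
The plan is to use the elementary criterion for nonnegative $2\times 2$ matrices. For
\[
M=\begin{bmatrix} a & b\\ c & d\end{bmatrix}
\]
with $a,b,c,d\ge 0$, we have $r(M)<1$ if and only if $a<1$, $d<1$ and $bc<(1-a)(1-d)$. To see this, note that the characteristic polynomial $p(\lambda)=(\lambda-a)(\lambda-d)-bc$ has real roots, the larger being the Perron root $r(M)$. Moreover $p$ is increasing to the right of $\max(a,d)$. Hence $r(M)<1$ is equivalent to $\max(a,d)<1$ together with $p(1)>0$. I would state this criterion first and prove it in two lines. Alternatively, one can exhibit a positive vector $v$ with $Mv<v$ componentwise, which gives $r(M)<1$ by the Collatz--Wielandt bound. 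All entries here are nonnegative, so the criterion applies to both matrices.

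For $M_{\Bsup}$ the verification is essentially built into Assumption~\ref{assump:smalltime}:
\begin{itemize}
\item $\tilde K_{\Bsup}(\beta)=\frac{2C_\beta}{1-2C_\beta}(N-1)<1$ is the second condition.
\item $\rho(\eta)=(1+\eta)L_\kappa^a<1$ is the third condition.
\item The product $C_\Delta^{\Bsup}(\beta)C_\zeta(\eta)=\frac{2C'_\beta N}{1-2C_\beta}(1+\eta^{-1})L_\kappa^aL_\kappa^{\FP}$ is exactly the left-hand side of the last displayed inequality, whose right-hand side is $(1-\tilde K_{\Bsup})(1-\rho)$.
\end{itemize}
Positivity of $1-2C_\beta$ (first condition) ensures all entries are finite and nonnegative. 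The criterion therefore gives $r(M_{\Bsup})<1$ immediately.

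For $M_{\Asup}$ there is no explicit assumption, so I would exploit the freedom in $\beta$ (and $\eta$) from Lemma~\ref{lem:functional_error_bounded}. First fix $\eta>0$ with $\rho(\eta)<1$, which is possible since $L_\kappa^a<1$; then $C_\zeta(\eta)<\infty$ is a fixed constant. Next observe from the explicit formulas that $\tilde K_{\Asup}(\beta)=\frac{2(N-1)(L_\ell^{\FP}+\MFP L_b^{\FP})}{\beta-2(L_\ell^{\FP}+\MFP L_b^{\FP})}\to0$ and $C_\Delta^{\Asup}(\beta)=O(\beta^{-1})\to0$ as $\beta\to\infty$. Hence $C_\Delta^{\Asup}(\beta)C_\zeta(\eta)\to 0$ while $(1-\tilde K_{\Asup}(\beta))(1-\rho(\eta))\to 1-\rho(\eta)>0$. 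Choosing $\beta$ sufficiently large (larger than $\beta_0$ and than $2(L_\ell^{\FP}+\MFP L_b^{\FP})$) makes all three criterion conditions hold, so $r(M_{\Asup})<1$.

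The main subtlety is the case $\Asup$, because the constants in Lemma~\ref{lem:functional_error_bounded} implicitly carry $e^{\beta T}$ factors from converting $E_\Delta^{m-1}$ into $E_{\Delta,\beta}^{m-1}$ (see the $2e^{\beta T}$ in~\eqref{eq:lemma36_first_term}). If that factor is genuinely present in $C_\Delta^{\Asup}$, the product would no longer vanish as $\beta\to\infty$.

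I would resolve this by noting that the bounds for $\delta\bar\ell^{i,m}$ and $\delta\hat\bb^{i,m}$ hold pointwise in time. Integrating them directly against $e^{\beta t}$ gives $E_{\Delta,\beta}^{m-1}$ with no $e^{\beta T}$ factor, so the stated formula $C^{\Asup}_\Delta(\beta)=O(\beta^{-1})$ is correct. If this fails, the fallback is to take $\eta$ large, which makes $C_\zeta(\eta)$ approach $L_\kappa^aL_\kappa^{\FP}$; one would then need a smallness condition analogous to that of case $\Bsup$.
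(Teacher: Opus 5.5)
Your proposal is correct and follows the paper's proof. The paper writes the Perron root $\frac{a+d+\sqrt{(a-d)^2+4bc}}{2}$ explicitly and, using $a,d<1$, squares to reduce $r<1$ to $bc<(1-a)(1-d)$; this is your characteristic-polynomial criterion. It then reads off case $\Bsup$ directly from Assumption~\ref{assump:smalltime} and handles case $\Asup$ by fixing $\eta$ with $\rho(\eta)<1$ and sending $\beta\to\infty$. Your concern about the stray $e^{\beta T}$ in~\eqref{eq:lemma36_first_term} is justified, since the paper's proof does not address it. Your fix — integrating the pointwise bounds on $\delta\bar\ell^{i,m}$ and $\delta\hat\bb^{i,m}$ against $e^{\beta t}$ to get $E^{m-1}_{\Delta,\beta}$ directly — is what makes the stated $C^{\Asup}_\Delta(\beta)=O(\beta^{-1})$ valid, so your fallback is not needed.
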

\begin{proof}
    We begin by proving the result for $M_{\Asup}$. Since all entries of $M_{\Asup}$ are non-negative, its spectral radius is given by its Perron root. For a $2 \times 2$ matrix with non-negative entries, this root is given by
    \begin{align*}
        r(M_{\Asup}) = \frac{\tilde{K}_{\Asup}(\beta) + \rho(\eta) + \sqrt{(\tilde{K}_{\Asup}(\beta) - \rho(\eta))^2 + 4 C_\Delta^{\Asup}(\beta) C_\zeta(\eta)}}{2}.
    \end{align*}
    The statement $r(M_{\Asup}) < 1$ thus holds if we can show that
    \begin{align*}
        \sqrt{(\tilde{K}_{\Asup}(\beta) - \rho(\eta))^2 + 4 C_\Delta^{\Asup}(\beta) C_\zeta(\eta)} < 2 - \tilde{K}_{\Asup}(\beta) - \rho(\eta).
    \end{align*}
    Note that, since both $\tilde{K}_{\Asup}(\beta) < 1$ and $\rho(\eta) < 1$, the right-hand side is strictly positive. Squaring both sides and rearranging terms give the condition
    \begin{align*}
         C_\Delta^{\Asup}(\beta) C_\zeta(\eta) < (1-\tilde{K}_{\Asup}(\beta))(1 - \rho(\eta)).
    \end{align*}
    We note that, given an $\eta$ such that $\rho(\eta) < 1$, we can consider $C_\zeta(\eta)$ and $\rho(\eta)$ fixed. Since $C^{\Asup}_\Delta(\beta)$ and $\tilde{K}_{\Asup}(\beta)$ both vanish as $\beta \rightarrow \infty$, it is always possible to choose $\beta$ sufficiently large such that this condition holds. Thus, it holds that $r(M_{\Asup}) < 1$. By Assumption~\ref{assump:smalltime}, the same conditions are satisfied for $M_{\Bsup}$, and it follows that $r(M_{\Bsup}) < 1$.
\end{proof}

\begin{proposition}\label{prop:functional_error}
Under either Assumption~\ref{assump:gradient} or~\ref{assump:smalltime}, there exist $\tilde K \in (0,1)$ and $C < \infty$ such that for all $m \geq 1$, it holds
\begin{align}
    E_{\zeta}^{m} + E_{\Delta}^{m} \leq C \tilde{K}^m.\label{eq:prop_functional_error}
\end{align}
\end{proposition}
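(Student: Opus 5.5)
Write $v^m \defeq (E^m_{\zeta,\beta}, E^m_{\Delta,\beta})^\top \in [0,\infty)^2$, with $\beta$ and $\eta$ fixed. Under Assumption~\ref{assump:gradient}, take $\eta$ with $\rho(\eta)<1$ and then $\beta>\beta_0$ large, as in Lemma~\ref{lem:spectral_radius}. Under Assumption~\ref{assump:smalltime}, take the $\beta,\eta$ given there.

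Stacking Lemma~\ref{lem:functional_error_bounded} (respectively Lemma~\ref{lemma:functional_error_unbounded}) on top of Lemma~\ref{lem:policy_error} gives the componentwise vector inequality
\begin{align*}
    v^m \le M v^{m-1}, \qquad m\ge1,
\end{align*}
with $M=M_{\Asup}$ or $M=M_{\Bsup}$. Since $M$ has non-negative entries, the inequality is preserved under multiplication by $M$. Iterating therefore yields $v^m\le M^m v^0$ componentwise.

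Next, the $2\times2$ matrix $M$ has non-negative entries and, by Lemma~\ref{lem:spectral_radius}, spectral radius $r(M)<1$. Pick any $\tilde K\in(r(M),1)$. Gelfand's formula gives a constant $C_M$ with $\|M^m\|\le C_M\tilde K^m$ for all $m$, in any fixed matrix norm, say the max-row-sum norm. Alternatively, one can use the Perron eigenvector: if $C_\Delta C_\zeta>0$, $M$ is positive off-diagonally and irreducible, so it has a positive left eigenvector $w$ with $w^\top M=r(M)w^\top$. Then $w^\top v^m\le r(M)^m w^\top v^0$, and positivity of $w$ controls both components. The degenerate case of a zero off-diagonal entry is triangular and is handled directly. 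Either route gives
\begin{align*}
    E^m_{\zeta,\beta}+E^m_{\Delta,\beta}\le C_M\tilde K^m\big(E^0_{\zeta,\beta}+E^0_{\Delta,\beta}\big).
\end{align*}

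The final step has two parts.

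\textbf{Passing to unweighted norms.} By the norm equivalence~\eqref{eq:HHH}, $E^m_\zeta\le E^m_{\zeta,\beta}$ and $E^m_\Delta\le E^m_{\Delta,\beta}$, since $e^{\beta s}\ge1$.

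\textbf{Finiteness of the initial error.} $E^0_{\zeta,\beta}=\sum_i\|\zeta^i(\cdot,\X_\cdot)\|^2_{\mathbb H^{2,n}_{\beta,T}}$ because $\zeta^{i,0}\equiv0$. This is finite by Lipschitz continuity (hence linear growth) of $\bzeta$ and the finite second moments of $\X$. $E^0_{\Delta,\beta}$ is finite because $\alpha^{*,i}$ and $\kappa^{i,0}\equiv0$ take values in the compact set $A^i$. Setting $C\defeq C_M e^{\beta T}\cdot 0+C_M(E^0_{\zeta,\beta}+E^0_{\Delta,\beta})$ gives~\eqref{eq:prop_functional_error}.

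I expect the main obstacle to be bookkeeping rather than depth. Two points need checking. First, the recursions in Lemmas~\ref{lem:functional_error_bounded}--\ref{lem:policy_error} must be used with the same $\beta$-weighted quantities throughout. Lemma~\ref{lem:functional_error_bounded} mixes $E_\Delta^{m-1}$ and $E_{\Delta,\beta}^{m-1}$ through an $e^{\beta T}$ factor, so I would absorb that factor into $C_\Delta$ and check that this does not spoil $r(M)<1$ for the chosen $\beta$. Second, the recursion must be valid starting from $m=1$, with the conventions $\zeta^{i,0}\equiv0$ and $\kappa^{i,0}\equiv0$ consistent with the definitions of $E^0$.
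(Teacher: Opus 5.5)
Your proposal is correct and is essentially the paper's own proof. Like the paper, you stack Lemma~\ref{lem:functional_error_bounded} (resp.\ Lemma~\ref{lemma:functional_error_unbounded}) with Lemma~\ref{lem:policy_error} into $v^m\le Mv^{m-1}$, iterate using nonnegativity of $M$, bound $\|M^m\|\le C_M\tilde K^m$ by Gelfand's formula with $r(M)<1$ from Lemma~\ref{lem:spectral_radius}, and remove the weights with~\eqref{eq:HHH}. Your explicit checks of the iteration step and of $E^0_{\zeta,\beta}+E^0_{\Delta,\beta}<\infty$ fill in details the paper leaves implicit.

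On your bookkeeping worry, do not absorb an $e^{\beta T}$ factor into $C^{\Asup}_\Delta(\beta)$: a constant of order $e^{\beta T}/\beta$ blows up as $\beta\to\infty$ and would break the large-$\beta$ argument of Lemma~\ref{lem:spectral_radius}. The factor in~\eqref{eq:lemma36_first_term} is simply spurious, because the pointwise bounds on $\delta\bar\ell^{i,m}_t$ and $\delta\bar\bb^{i,m}_t$, integrated against $e^{\beta t}$, give $E^{m-1}_{\Delta,\beta}$ directly, exactly as in the line preceding that display.
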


\begin{proof}
    Under Assumption~\ref{assump:gradient}, Lemma~\ref{lem:functional_error_bounded} combined with Lemma~\ref{lem:policy_error} gives the coupled bound
    \begin{align*}
        E_{\zeta,\beta}^{m}  &\leq \tilde{K}_{\Asup} E^{m-1}_{\zeta,\beta} + C_{\Delta}^{\Asup} E^{m-1}_{\Delta,\beta}, \\
        E_{\Delta,\beta}^{m} &\leq C_\zeta(\eta) E^{m-1}_{\zeta,\beta} + \rho(\eta) E^{m-1}_{\Delta,\beta}.
    \end{align*}
    On vector form, this can be written as
    \begin{align*}
        U_m \leq M_{\Asup} U_{m-1} \leq M_{\Asup}^m U_0,
    \end{align*}
    where $U_m \defeq [E_{\zeta,\beta}^{m},E_{\Delta,\beta}^{m}]^\top$, and $M_{\Asup}$ is the matrix from Lemma~\ref{lem:spectral_radius}. Taking the induced $\ell^1$-norm on both sides, we have
    \begin{align}
        \| U_m \|_1
        \leq
        \| M_{\Asup}^m \|_1
        \| U_0 \|_1.\label{eq:Um_estimate}
    \end{align}
    By Lemma~\ref{lem:spectral_radius}, it holds under Assumption~\ref{assump:gradient} that $r(M_{\Asup}) < 1$. Thus, from the Gelfand formula, for every $\tilde{K} \in (r(M_{\Asup}),1)$ there exists $C_K \geq 1$ such that
    \begin{align*}
        \|M_{\Asup}^m\|_1 \leq C_K \tilde{K}^m.
    \end{align*}
    Inserting this into~\eqref{eq:Um_estimate} gives
    \begin{align*}
        E_{\zeta,\beta}^{m} + E_{\Delta,\beta}^{m} \leq C_K \tilde{K}^m \big( E_{\zeta,\beta}^{0} + E_{\Delta,\beta}^{0} \big).
    \end{align*}
    The norm equivalence~\eqref{eq:HHH} finishes the proof. The proof under Assumption~\ref{assump:smalltime} follows analogously.
\end{proof}

\subsubsection{Main theorem}

We now conclude the convergence analysis by combining the results from Propositions~\ref{prop:deltaXYZ}--\ref{prop:functional_error}. The following theorem states the main contribution of our paper.

\begin{theorem}
    \label{theorem:conv_FP_FBSDE}
Let either Assumption~\ref{assump:gradient} or~\ref{assump:smalltime} hold. Then there exist $K \in (0,1)$ and $C < \infty$ such that for all $m \geq 1$, it holds
\begin{align}\label{eq:XYZ}
            \max_{i\in\I}\big\|\delta\X^{i,m}\big\|^2_{\mathbb S^{2,n}_{T}}
            +&
            \sum_{i\in\I}\big\|\delta Y^{i,m}\big\|^2_{\mathbb S^{2,1}_{T}}
            +
            \sum_{i\in\I}\Big\|\zeta^{i,[\tau]}(\cdot,\X_\cdot)-\zeta^{i,m,[\tau^{i,m}]}\big(\cdot,\X_\cdot^{i,m}\big)\Big\|_{\mathbb H^{2,n}_{T}}^2
            \leq
CK^{m}.
\end{align}
\end{theorem}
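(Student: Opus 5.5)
The plan is to combine the three ingredients already established. The left-hand side of \eqref{eq:XYZ} is exactly $\Psi^m$ from \eqref{eq:Psim}. By Proposition~\ref{prop:deltaXYZ}, for any fixed $\varepsilon\in(0,1)$,
\begin{align*}
\Psi^m \lesssim E_\zeta^m + E_\zeta^{m-1} + E_\Delta^{m-1} + \sum_{i\in\I}\Big(\big\|\delta\tau^{i,m}\big\|_{\mathbb L^{1,1}} + \big\|\delta\tau^{i,m}\big\|_{\mathbb L^{1,1}}^{1-\varepsilon}\Big),
\end{align*}
with an implicit constant that does not depend on $m$. It then suffices to show that each term on the right decays geometrically in $m$. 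The Markov-map and policy terms are handled directly by Proposition~\ref{prop:functional_error}: since $E_\zeta^m + E_\Delta^m \le C\tilde K^m$, we get $E_\zeta^m + E_\zeta^{m-1} + E_\Delta^{m-1} \le C(1+\tilde K^{-1})\tilde K^m$.

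For the stopping-time terms I would split into the two cases allowed by the assumptions. If $\D=\R^n$, which covers all of Assumption~\ref{assump:smalltime} and the unbounded case of Assumption~\ref{assump:gradient}, Proposition~\ref{prop:stoppin_time_L1} gives $\delta\tau^{i,m}=0$ almost surely, so these terms vanish and we may take $K=\tilde K$. If $|\D|<\infty$, which arises only under Assumption~\ref{assump:gradient}, Proposition~\ref{prop:stoppin_time_L1} bounds $\|\delta\tau^{i,m}\|_{\mathbb L^{1,1}}$ by the square root of a sum of killed $\delta\bar\zeta$-norms plus $E_\Delta^{m-1}$. Since killing at $\tau_{\min}^{i,m}\le\tau$ only decreases the $\mathbb H$-norms, this sum is dominated by $E_\zeta^m+E_\zeta^{m-1}+E_\Delta^{m-1}\lesssim \tilde K^{m}$. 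Hence $\|\delta\tau^{i,m}\|_{\mathbb L^{1,1}}\lesssim \tilde K^{m/2}$, and therefore $\|\delta\tau^{i,m}\|_{\mathbb L^{1,1}}^{1-\varepsilon}\lesssim \tilde K^{m(1-\varepsilon)/2}$. Both bounds are geometric, so we may take $K=\tilde K^{(1-\varepsilon)/2}\in(0,1)$ for, say, $\varepsilon=1/2$. Since $\tilde K<1$, the slowest of the rates $\tilde K$, $\tilde K^{1/2}$ and $\tilde K^{(1-\varepsilon)/2}$ is $\tilde K^{(1-\varepsilon)/2}$, and it dominates the others. Collecting terms gives $\Psi^m\le CK^m$.

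The main point needing care is the bounded-domain case, where the rate degrades because of the square root in Proposition~\ref{prop:stoppin_time_L1} and the Hölder exponent $1-\varepsilon$. One must also check that the constants in Proposition~\ref{prop:deltaXYZ} and Proposition~\ref{prop:stoppin_time_L1} are uniform in $m$. This uniformity rests on the uniform Lipschitz bound \eqref{eq:LFP}, on (L), and on the $m$-uniform moment bounds used at the end of Lemma~\ref{lemma:delta_x_bound}, so I would verify it explicitly. A smaller point is that Proposition~\ref{prop:functional_error} is stated for $m\ge1$, while the $m-1$ terms at $m=1$ involve $E^0_\zeta$ and $E^0_\Delta$. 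These are finite because $\zeta^{i,0}\equiv0$, $\alpha^{i,0}\equiv0$, $A$ is compact and $\bzeta$ has linear growth. Their contribution can therefore be absorbed by enlarging $C$.
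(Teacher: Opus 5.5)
Your proposal is correct and follows essentially the same route as the paper. You combine Proposition~\ref{prop:deltaXYZ}, Proposition~\ref{prop:stoppin_time_L1} for the stopping-time terms (which vanish when $\D=\R^n$) and Proposition~\ref{prop:functional_error}, obtaining $K=\tilde K^{(1-\varepsilon)/2}$ in the bounded-domain case and $K=\tilde K$ otherwise; the only cosmetic difference is that you bound each player's square-root argument directly by $E_\zeta^m+E_\zeta^{m-1}+E_\Delta^{m-1}$, where the paper reaches the same estimate via subadditivity of $y\mapsto y^{\alpha/2}$ and a Jensen step.
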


\begin{proof}

Recall that $\Psi^{m}$ from~\eqref{eq:Psim} is the left-hand side of~\eqref{eq:XYZ}. From Proposition~\ref{prop:deltaXYZ}, it holds that
\begin{align}
    \Psi^{m} 
    \lesssim 
    E_\zeta^{m}
    +
    E_\zeta^{m-1}
    +
    E_\Delta^{m-1}
    +
    \sum_{i\in\I}
    \big\|\delta\tau^{i,m}\big\|_{\mathbb{L}^{1,1}}
    +
    \big\|\delta\tau^{i,m}\big\|_{\mathbb{L}^{1,1}}^{1-\varepsilon}.\label{eq:mainthm_bound}
\end{align}  
The last two terms can be bounded using Proposition~\ref{prop:stoppin_time_L1}. More precisely, for $\alpha \in \{1, 1 - \varepsilon\}$, we have
\begin{align*}
    \big\|
        \delta\tau^{i,m}
    \big\|_{\mathbb{L}^{1,1}}^{\alpha}
    &\lesssim 
    \bigg(
        \Big\|
            \delta\bar \zeta^{i,m,[\tau_{\min}^{i,m}]}
        \Big\|_{\mathbb H^{2,n}_{T}}^2
        +
        \sum_{j\in\I\setminus\{i\}}
        \Big\|
            \delta \bar{\zeta}^{j,m-1,[\tau_\text{min}^{i,m}]}
        \Big\|_{\mathbb{H}^{2,n}_{T}}^2 
        + 
        E_{\Delta}^{m-1}
    \bigg)^{\frac\alpha2}\\
    &\leq
    \bigg(
        \Big\|
            \delta\bar \zeta^{i,m,[\tau_{\min}^{i,m}]}
        \Big\|_{\mathbb H^{2,n}_{T}}^2
    \bigg)^{\frac\alpha2}
    +
    \sum_{j\in\I\setminus\{i\}}
    \bigg(
        \Big\|
            \delta \bar{\zeta}^{j,m-1,[\tau_\text{min}^{i,m}]}
        \Big\|_{\mathbb{H}^{2,n}_{T}}^2
    \bigg)^{\frac\alpha2} 
    + 
    \big( 
        E_{\Delta}^{m-1}
    \big)^{\frac{\alpha}{2}} \\
    &\leq
    \bigg(
        \Big\|
            \delta\bar \zeta^{i,m,[\tau]}
        \Big\|_{\mathbb H^{2,n}_{T}}^2
    \bigg)^{\frac\alpha2}
    +
    \sum_{j\in\I\setminus\{i\}}
    \bigg(
        \Big\|
            \delta \bar{\zeta}^{j,m-1,[\tau]}
        \Big\|_{\mathbb{H}^{2,n}_{T}}^2
    \bigg)^{\frac\alpha2} 
    +
    \big( 
        E_{\Delta}^{m-1}
    \big)^{\frac{\alpha}{2}},
\end{align*}
where the second inequality follows from subadditivity of the function $\varphi(y) = y^{\alpha/2}$, and the last follows from the fact that $\tau^{i,m}_{\min} \leq \tau$. By summing over all players $i \in \I$, the above inequality becomes
\begin{align}
\sum_{i\in\I}
\big\|\delta\tau^{i,m}\big\|_{\mathbb{L}^{1,1}}^{\alpha}
&\lesssim 
  \sum_{i\in\I}
  \bigg(
            \Big\|\delta\bar \zeta^{i,m,[\tau]}
            \Big\|_{\mathbb H^{2,n}_{T}}^2\bigg)^{\frac\alpha2}
            +
            \sum_{i\in \I}
            \sum_{j\in\I\setminus\{i\}}\bigg(
\Big\|\delta \bar{\zeta}^{j,m-1,[\tau]}\Big\|_{\mathbb{H}^{2,n}_{T}}^2\bigg)^{\frac\alpha2} + N\big( E_\Delta^{m-1} \big)^{\frac{\alpha}{2}}. \label{eq:mainthm_before_jensen}
\end{align}
Next, recall that from Jensen's inequality for concave functions it holds that
\begin{align*}
   \sum_{i=1}^N \varphi(y_i) \leq N \varphi\bigg( \frac{1}{N} \sum_{i=1}^N y_i  \bigg).
\end{align*}
Applying this to~\eqref{eq:mainthm_before_jensen} yields
\begin{align*}
\sum_{i\in\I}
\big\|\delta\tau^{i,m}\big\|_{\mathbb{L}^{1,1}}^{\alpha}
&\lesssim 
  N^{1-\frac\alpha2}
  \bigg(\sum_{i\in\I}
            \Big\|\delta\bar \zeta^{i,m,[\tau]}
            \Big\|_{\mathbb H^{2,n}_{T}}^2\bigg)^{\frac\alpha2}
            +
            N(N-1)^{1-\frac\alpha2}
            \bigg(\sum_{j\in\I\setminus\{i\}}
\Big\|\delta \bar{\zeta}^{j,m-1,[\tau]}\Big\|_{\mathbb{H}^{2,n}_{T}}^2\bigg)^{\frac\alpha2} + N\big( E_\Delta^{m-1} \big)^{\frac{\alpha}{2}}\\
&\lesssim
    \big(
    E_\zeta^m
    \big)^{\frac\alpha2}
    +
    \big(
        E_{\zeta}^{m-1}
    \big)^{\frac\alpha2} + \big( E_\Delta^{m-1}\big)^{\frac{\alpha}{2}}.
\end{align*}
Inserting this bound into~\eqref{eq:mainthm_bound} and applying Proposition~\ref{prop:functional_error} gives for the case $|\D|<\infty$ the bound
\begin{align*}
    \Psi^{m}&
    \lesssim
    \sum_{\alpha\in\{1-\varepsilon,1,2\}}
    \big(  E_\zeta^m + E_\zeta^{m-1} +  E_\Delta^{m-1} \big)^\frac\alpha2
    \lesssim
    \sum_{\alpha\in\{1-\varepsilon,1,2\}}
    \big( \tilde{K}^{m} + \tilde{K}^{m-1} \big)^{\frac{\alpha}{2}}
    \lesssim
    \tilde K^{\frac{(1-\varepsilon)m}2}
    =
    K^m,
\end{align*}
where $K=\tilde K^{\frac{1-\varepsilon}2}$. For the case $\D=\R^n$, where $\|\delta\tau^{i,m}\|_{\mathbb{L}^{1,1}}=0$, we instead have the bound
\begin{align*}
    \begin{split}
        \Psi^{m}
        \lesssim
         E_\zeta^m + E_\zeta^{m-1} +  E_\Delta^{m-1}
        \lesssim
        \tilde{K}^m + \tilde{K}^{m-1}
        \lesssim K^m.
    \end{split}
    \qedhere
\end{align*}
\end{proof}

\begin{remark}
    Theorem~\ref{theorem:conv_FP_FBSDE} establishes a geometric convergence rate of the fictitious-play scheme under both Assumption~\ref{assump:gradient} and Assumption~\ref{assump:smalltime}. Under Assumption~\ref{assump:gradient}, this rate can be sharpened to a super-exponential rate in the special case where the best-response map does not depend on the opponents' actions. This is the case, for instance, in Example~\ref{ex:1}.

    Suppose that $\kappa^i(t,\x,p,\boldsymbol{a}^{-i})$ is independent of $\boldsymbol{a}^{-i}$ for each player $i \in \I$. Then, by definition, $\Delta^i(t,\x) \equiv 0$, and in particular $E_\Delta^m \equiv 0$. Consequently, Lemma~\ref{lem:functional_error_bounded}, which measures the Markov-map error under Assumption~\ref{assump:gradient}, reduces to
    \begin{align*}
        E_\zeta^m 
        \leq E_{\zeta,\beta}^m 
        \leq \tilde{K}_{\Asup}(\beta) E_{\zeta,\beta}^{m-1}
        \leq \big(\tilde{K}_{\Asup}(\beta)\big)^m  E_{\zeta,\beta}^{0} 
        \leq e^{\beta T} \big(\tilde{K}_{\Asup}(\beta)\big)^m E_{\zeta}^0.
    \end{align*}
    As in Lemma~\ref{lem:functional_error_bounded}, $\tilde{K}_{\Asup}(\beta)$ is of order $\mathcal{O}(1/\beta)$. Choosing $\beta$ of order $m/T$ therefore gives
    \begin{align*}
        E_\zeta^m \lesssim \exp(-cm\log(m))
    \end{align*}
    for some constant $c > 0$. Inserting this estimate into the final error bound used in the proof of Theorem~\ref{theorem:conv_FP_FBSDE}, and using $E_\Delta^m \equiv 0$, gives the same super-exponential decay for the fictitious-play error.
\end{remark}

\color{black}

\section{Numerical example} \label{sec:NE}

We illustrate the convergence properties of fictitious play using the linear--quadratic interbank borrowing and lending model introduced in~\cite{fouque2015mean}. This example provides a convenient benchmark for the numerical method, since both the Nash equilibrium and the fictitious-play iterates admit explicit representations. In particular, this allows for a direct comparison between the numerical approximations and the corresponding analytical solutions. 

We consider an $N$-player game, where the setting is a special case of the linear--quadratic game from Example~\ref{ex:LQ}. The state space is $\D = \R^N$, and each player has action space $A^1 = \cdots = A^N = \R$. Denoting by $\mathbf{1} \in \R^N$ the vector of ones, we define the empirical mean $\mu \colon \R^N \to \R$ by
\begin{align*}
    \mu(\x) \defeq \frac{1}{N} \mathbf{1}^\top \x,
\end{align*}
and the corresponding vectorized function $\bmu(\x) \defeq \mu(\x) \mathbf{1}$. Let $\beta, \rho, \sigma, \delta, \varepsilon, \gamma \in \R$ be scalar parameters, whose values are specified below. For $(t,\x) \in Q_T$ and $\ba \in \R^N$, the drift coefficient is given by
\begin{align*}
    \bar{\bb}(t,\x,\ba)
    =
    \beta
    (
        \bmu(\x)
    -
        \x
    )
    +
    \ba,
\end{align*}
and the diffusion coefficient by
\begin{align*}
\bSigma(t,\x)
 =
\sigma
\begin{pmatrix}
\rho & \sqrt{1-\rho^2} & 0 & \cdots & 0\\
\rho & 0 & \sqrt{1-\rho^2} & \cdots & 0\\
\vdots & \vdots & \vdots & \ddots & \vdots\\
\rho & 0 & 0 & \cdots & \sqrt{1-\rho^2}
\end{pmatrix}.
\end{align*}
For $(t,\x) \in Q_T$ and $\ba \in \R^N$, the running and terminal costs of player $i \in \I$ are given by
\begin{align*}
\bar{f}^i(t,\x,\ba)
&=
\frac12
\big(
  a^i
\big)^2
-\delta\,a^i
\big(
  \mu(\x)-x^i
\big)
+\frac{\varepsilon}{2}
\big(
  \mu(\x)-x^i
\big)^2,\\
g^i(\x)
&=
\frac{\gamma}{2}
\big(\mu(\x)-x^i\big)^2.
\end{align*}
Recall that the HJB equation admits a semi-analytical solution of the form~\eqref{eq:LQ_Vi}. Using the ansatz
\begin{align*}
    P^i_t 
    = 
    \eta_t 
    \Bigl( \boldsymbol{e}_i - \frac{1}{N}\mathbf{1} \Bigr)
    \Bigl( \boldsymbol{e}_i - \frac{1}{N}\mathbf{1} \Bigr)^\top,
\end{align*}
as a solution to the Riccati system, one can show that the value function of player $i$ takes the form
\begin{align*}
    V^i(t,\x) 
    = 
    \frac{1}{2} 
    \eta_t
    \left( \mu(\x)-x^i \right)^2
    +
    \nu_t,
\end{align*}
where the scalar functions $\eta_t$ and $\nu_t$ satisfy
\begin{align*}
    \dot\eta_t
    &=
    \Bigl(1-\frac1{N^2}\Bigr)\eta_t^2
    +
    2(\beta+\delta)\eta_t
    -
    \varepsilon+\delta^2, \\
    \nu_t
    &=
    \frac12\,\sigma^2\big(1-\rho^2\big)\Bigl(1-\frac1N\Bigr)
    \int_t^T \eta_s\,\d s, 
\end{align*}
with the terminal condition $\eta_T = \gamma$. From this we obtain the semi-analytical reference solution for the processes $\Y$ and $\Z$. The forward process $\X$ can then be solved for explicitly.

We initialize the fictitious-play scheme with $\bzeta^{0} \equiv 0$. For $m \geq 1$, we consider the fictitious-play HJB equations~\eqref{eq:HJB-best-response} with an ansatz analogous to the equilibrium case. This leads to a value function on the similar form
\begin{align*}
    V^{i,m}(t,\x) 
    = 
    \frac{1}{2} 
    \eta^{m}_t
    \left( \mu(\x)-x^i \right)^2
    +
    \nu^{m}_t,
\end{align*}
where the scalar function $\eta^{m}_t$ satisfies
\begin{align*}
    \dot\eta^{m}_t
    &=
    \Bigl(1-\frac1N\Bigr)^2\big(\eta^{m}_t\big)^2
    +
    2\bigg(\beta+\frac{K^{m-1}(t)}{N}
    +
    \Bigl(1-\frac1N\Bigr)\delta\bigg)\eta^{m}_t
    -
    \varepsilon+\delta^2,
\end{align*}
with $K^{m}(t) \defeq \delta+(1-\frac1N)\eta^{m}(t)$, and where
\begin{align*}
    \nu^{m}_t
    =
    \frac12\,\sigma^2\big(1-\rho^2\big)\Bigl(1-\frac1N\Bigr)
    \int_t^T \eta^{m}_s\,\d s.
\end{align*}
This again yields a semi-analytical solution for the fictitious-play iterates. Since the derivation is standard in the linear--quadratic setting, we omit the details.

In the numerical example, we fix
\begin{align*}
    \beta = 0.1, \quad 
    \delta = 0.2, \quad
    \varepsilon = 0.5, \quad
    \gamma = 0.5, \quad
    \rho = 0.2, \quad
    \sigma = 1, \quad
    T = 10, 
\end{align*}
and consider the cases $N = 2$ and $N = 20$. We define the error metrics by
\begin{align*}
  \mathrm{Err}_{\scalebox{0.65}{$X$}} \defeq \max_{i\in\mathcal I}\big\|\delta \X^{i,m}\big\|_{\mathbb S^{2,N}_{T}}^2,
  \quad
  \mathrm{Err}_{\scalebox{0.65}{$Y$}} \defeq \sum_{i\in\mathcal I}\big\|\delta Y^{i,m}\big\|_{\mathbb S^{2,1}_{T}}^2 \quad\textrm{and}\quad
  \mathrm{Err}_{\scalebox{0.65}{$Z$}} \defeq \sum_{i\in\mathcal I}\big\|\delta Z^{i,m}\big\|_{\mathbb H^{2,N+1}_{T}}^2.
\end{align*}
Since these norms involve expectations, the reported values are their Monte Carlo estimators $\widehat{\mathrm{Err}}_{\scalebox{0.65}{$X$}}$, $\widehat{\mathrm{Err}}_{\scalebox{0.65}{$Y$}}$, and $\widehat{\mathrm{Err}}_{\scalebox{0.65}{$Z$}}$.

The Riccati equation for $\eta$ admits a closed-form solution, which we evaluate explicitly. For $\eta^{m}$, we solve the Riccati equation on each time interval by using its explicit solution with the previous iterate frozen on that interval. The quantities $\nu$ and $\nu^{m}$ are approximated on the same uniform grid using the trapezoidal rule. For the Monte Carlo approximation of the Brownian motion, we use $2^{6}$ samples, and we keep the same set of trajectories at every fictitious-play iteration so that the reported errors reflect only the dependence on the iteration index $m$. We use $5 \cdot 10^6$ time steps. The initial conditions are $\x_0 = (-4.95, 1.84)^\top$ and
\begin{align*}
    \x_0 = 
    \big(
        -&4.95,\, -1.84,\,  6.44,\,  0.97,\,   4.60,\,
        2.89,\,   -3.18,\,  2.71,\, -1.58,\,  -1.61, \\
        &0.49,\,  -7.63,\,  5.96,\, -3.36,\,   5.00,\, 
        0.68,\,    7.66,\,  -3.30,\, -1.56,\,  1.69
    \big)^\top
\end{align*}
for $N = 2$ and $N = 20$, respectively.

\begin{figure}
    \centering
    \input{numplot_v2}
    \caption{Convergence of the fictitious-play scheme for the interbank model in the cases $N=2$ (left) and $N=20$ (right).}
    \label{fig:placeholder}
\end{figure}

Figure~\ref{fig:placeholder} shows the convergence results for the numerical example. For both population sizes and all error metrics, the fictitious-play iterates exhibit clear exponential convergence until the errors reach machine precision, or values close to it. For $N = 2$ and $N = 20$, the errors decrease by factors of approximately $23$ and $1000$ per iteration, respectively. The faster convergence for $N=20$ is consistent with the weaker coupling between players as the population size increases. Indeed, in the limit $N \to \infty$, the game converges to a mean-field control problem. These results are consistent with our theoretical findings, even though this example does not satisfy the assumptions under which the convergence theory is established. This suggests that fictitious play may converge beyond the range of our theoretical assumptions.

\appendix

\section{Notes on the Lipschitz condition}\label{sec:lipschitz}

In Section~\ref{sec:setting3}, we introduced the Lipschitz condition~(L) for the functions $\bb$, $\f$, $\bb^{i,m}$, $\ell^{i,m}$ and $\kappa^{i,m}$. Under Assumption~\ref{assump:smalltime}, this condition is imposed directly. The purpose of this appendix is to verify that the local version of (L) follows under Assumption~\ref{assump:gradient}, and to identify a simple setting in which the global version follows from the standard Lipschitz assumptions.

The obstruction to global Lipschitz continuity is the variable $\bPhi(t,\x)\z$. The estimates below show that condition~(L) holds in either of the following two cases:
\begin{enumerate}
    \item[(I)] The relevant $\z$ arguments are uniformly bounded. That is, $\z$ belongs to the set of matrices with norm bounded by some $R_{\z} < \infty$. Under Assumption~\ref{assump:gradient}, this follows from the gradient bound.
    \item[(II)] The noise is additive and time-independent, so that $\bPhi$ is independent of $(t,\x)$.
\end{enumerate}
Throughout this section, we use the compact notation
\begin{align*}
    D_{t\x} = |t_1 - t_2| + \|\x_1 - \x_2\|^2_{\R^n}, \qquad D_{\z} = \tnorm{\z_1 - \z_2}^2_{\R^{n \times N}}, \qquad D_{\boldsymbol{a}^{-i}} = \tnorm{\boldsymbol{a}^{-i}_1 - \boldsymbol{a}^{-i}_2}^2_{\R^{d \times (N-1)}},
\end{align*}
and $D_{z}$ the adaptation of $D_{\z}$ onto $\R^n$. We show that the functions satisfy the bounds
\begin{align}
    \begin{split}
        \|\bb(t_1,\x_1,\z_1)-\bb(t_2,\x_2,\z_2)\|_{\R^{n}}^2
        &\leq
        L_{\bb}
        \big(
          D_{t\x}
          +
           D_{\z}
        \big),\\
        \|\f(t_1,\x_1,\z_1)-\f(t_2,\x_2,\z_2)\|_{\R^{N}}^2
        &\leq
        L_{\f}
        \big(
          D_{t\x}
          +
           D_{\z}
        \big),\\
        \big\|
            \kappa^{i,m}(t_1,\x_1,z_1)
            -
            \kappa^{i,m}(t_2,\x_2,z_2)
        \big\|_{\R^d}^2
        &\leq
        L_\kappa^{\FP}
        \big(
            D_{t\x}
            +
            D_{z}
        \big),\\   
        \big\|
          \bb^{i,m}(t_1,\x_1,z_1^i,\z_1^{-i})-\bb^{i,m}(t_2,\x_2,z^i_2,\z_2^{-i})
        \big\|_{\R^{n}}^2
        &\leq
        L_{b}^{\text{FP}}
        \big(
          D_{t\x}
          +
           D_{\z}
        \big),\\    
        \big|
          \ell^{i,m}(t_1,\x_1,z_1^i,\z_1^{-i})
          -
          \ell^{i,m}(t_2,\x_2,z_2^i,\z_2^{-i})
        \big|^2
        &\leq
        L_{\ell}^{\text{FP}}
        \big(
          D_{t\x}
          +
        D_{\z}
        \big).
    \end{split}\label{eq:lipschtiz_bounds}
\end{align}
The following lemma shows the Lipschitz continuity for the functions $\bb$ and $\f$.
\begin{lemma}\label{lem:lipschitz_fb}
    If case (I) holds, the functions $\bb$ and $\f$ satisfy the bounds in~\eqref{eq:lipschtiz_bounds} with constants
    \begin{align*}
        L_{\bb} 
        &= 
        L_{\bar{\bb}} 
        \max
        \big\{
            1 + L_{\bc} + 2L_{\bc}L_{\bPhi}R_{\z}^2,
            2L_{\bc}C_{\bPhi}^2
        \big\},
        \\
        L_{\f} 
        &= 
        L_{\bar{\f}} 
        \max
        \big\{
            1 + L_{\bc} + 2L_{\bc}L_{\bPhi}R_{\z}^2,
            2L_{\bc}C_{\bPhi}^2
        \big\}.
    \end{align*}
    Alternatively, if case (II) holds, the functions $\bb$ and $\f$ satisfy the bounds with constants
    \begin{align*}
        L_{\bb} 
        &= 
        L_{\bar{\bb}} 
        \max
        \big\{
            1 + L_{\bc},
            2L_{\bc}C_{\bPhi}^2
        \big\},
        \\
        L_{\f} 
        &= 
        L_{\bar{\f}} 
        \max
        \big\{
            1 + L_{\bc},
            2L_{\bc}C_{\bPhi}^2
        \big\}.
    \end{align*}
\end{lemma}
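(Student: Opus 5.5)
The argument is a direct composition of Lipschitz estimates. By definition, $\bb(t,\x,\z)=\bar\bb\big(t,\x,\bc(t,\x,\bPhi(t,\x)\z)\big)$, so the plan is to apply \eqref{eq:bhat} first and \eqref{eq:lip7} second. The only genuinely new ingredient is a bound on $\tnorm{\bPhi(t_1,\x_1)\z_1-\bPhi(t_2,\x_2)\z_2}^2$ in terms of $D_{t\x}$ and $D_{\z}$. The estimate for $\f$ is word for word the same, with $L_{\bar\bb}$ replaced by $L_{\bar\f}$ and \eqref{eq:bhat} replaced by the Lipschitz property of $\bar\f$. I therefore treat only $\bb$.

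\textbf{Step 1.} Write $\p_k\defeq\bPhi(t_k,\x_k)\z_k$ for $k=1,2$. Then \eqref{eq:bhat} gives
\begin{align*}
\|\bb(t_1,\x_1,\z_1)-\bb(t_2,\x_2,\z_2)\|_{\R^n}^2
\le L_{\bar\bb}\Big(D_{t\x}+\tnorm{\bc(t_1,\x_1,\p_1)-\bc(t_2,\x_2,\p_2)}^2_{\R^{d\times N}}\Big).
\end{align*}
Applying \eqref{eq:lip7} to the second term bounds it by $L_{\bc}\big(D_{t\x}+\tnorm{\p_1-\p_2}^2_{\R^{n\times N}}\big)$.

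\textbf{Step 2.} To handle $\tnorm{\p_1-\p_2}^2$, add and subtract $\bPhi(t_2,\x_2)\z_1$ and use \eqref{ineq:sumsq} together with submultiplicativity of the Frobenius norm:
\begin{align*}
\tnorm{\p_1-\p_2}^2\le 2\tnorm{\bPhi(t_1,\x_1)-\bPhi(t_2,\x_2)}^2\tnorm{\z_1}^2+2\tnorm{\bPhi(t_2,\x_2)}^2\tnorm{\z_1-\z_2}^2.
\end{align*}
By \eqref{eq:Lip_Phi}, this is at most $2L_{\bPhi}\tnorm{\z_1}^2D_{t\x}+2C_{\bPhi}^2D_{\z}$.

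\textbf{Step 3 (the case split).} This is the only delicate point: the factor $\tnorm{\z_1}^2$ in Step 2 is unbounded in general, and this is exactly the obstruction noted in the appendix.
\begin{enumerate}
\item[(I)] In case (I), bound $\tnorm{\z_1}^2\le R_{\z}^2$.
\item[(II)] In case (II), $\bPhi$ is constant, so the first term in Step 2 vanishes identically (formally, $L_{\bPhi}=0$).
\end{enumerate}

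\textbf{Step 4.} Collecting terms gives
\begin{align*}
L_{\bar\bb}\Big[(1+L_{\bc}+2L_{\bc}L_{\bPhi}R_{\z}^2)D_{t\x}+2L_{\bc}C_{\bPhi}^2D_{\z}\Big].
\end{align*}
Bounding each coefficient by their maximum yields the stated constant $L_{\bb}$ in case (I). Dropping the $R_{\z}$-term yields the constant in case (II).

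I expect no real obstacle beyond keeping track of which $\z$ multiplies the $\bPhi$-increment. One small care point: in case (I) the bound $R_{\z}$ must apply to the argument $\z_1$ (or, symmetrically, to $\z_2$) that appears there, which is exactly what membership of both arguments in the bounded set guarantees.
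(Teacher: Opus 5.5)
Your proposal is correct and follows essentially the same route as the paper: apply the Lipschitz bound for $\bar\bb$, then the one for $\bc$, split the increment of $\bPhi\z$, and close with either $R_{\z}$ or $L_{\bPhi}=0$. The only cosmetic difference is that you insert $\bPhi(t_2,\x_2)\z_1$ where the paper inserts $\bPhi(t_1,\x_1)\z_2$, which merely changes whether $\z_1$ or $\z_2$ carries the bound $R_{\z}$.
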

\begin{proof}
    Using the definition of $\bb$, and the Lipschitz continuity of $\bar{\bb}$ and $\bc$, we obtain
    \begin{align*}
        &\big\|
            \bb(t_1, \x_1, \z_1)
            -
            \bb(t_2, \x_2, \z_2)
        \big\|_{\R^n}^2 
        \\
        &\qquad=
        \big\|
            \bar{\bb}(t_1, \x_1, \bc(t_1,\x_1,\bPhi(t_1,\x_1)\z_1))
            -
            \bar{\bb}(t_2, \x_2, \bc(t_2,\x_2,\bPhi(t_2,\x_2)\z_2))
        \big\|^2_{\R^n}
        \\
        &\qquad\leq
        L_{\bar{\bb}}
        \big(
            D_{t\x}
            +
            \tnorm{
                \bc(t_1,\x_1,\bPhi(t_1,\x_1)\z_1))
                -
                \bc(t_2,\x_2,\bPhi(t_2,\x_2)\z_2))}_{\R^{d\times N}}^2
        \big)
        \\
        &\qquad\leq
        L_{\bar{\bb}}
        \Big(
            D_{t\x}
            +
            L_{\bc}
            \big(
                D_{t\x}
                +
                \tnorm{
                    \bPhi(t_1,\x_1)\z_1
                    -
                    \bPhi(t_2,\x_2)\z_2}^2_{\R^{n \times N}}
            \big)
        \Big).
    \end{align*}
    For the final term, adding and subtracting $\bPhi(t_1,\x_1)\z_2$ gives
    \begin{align*}
        \tnorm{
            \bPhi(t_1,\x_1)\z_1
            -
            \bPhi(t_2,\x_2)\z_2}^2_{\R^{n \times N}}
        &\leq
        2
        \tnorm{
            \bPhi(t_1,\x_1)(\z_1 - \z_2)}^2_{\R^{n \times N}}
        +
        2
        \tnorm{
            \big(
                \bPhi(t_1, \x_1)
                -
                \bPhi(t_2, \x_2)
            \big)
            \z_2}^2_{\R^{n \times N}}
        \\
        &\leq
        2C_{\bPhi}^2 D_{\z}
        +
        2L_{\bPhi}D_{t\x}
        \tnorm{
            \z_2}_{\R^{n\times N}}^2.
    \end{align*}
    Consequently, we have
    \begin{align*}
        \big\|
            \bb(t_1, \x_1, \z_1)
            -
            \bb(t_2, \x_2, \z_2)
        \big\|_{\R^n}^2
        &\leq
        L_{\bar{\bb}}
        \big(
            D_{t\x}
            +
            L_{\bc}
            \big(
                D_{t\x}
                +
                2C_{\bPhi}^2 D_{\z}
                +
                2L_{\bPhi}D_{t\x}\tnorm{
            \z_2}_{\R^{n\times N}}^2
            \big)
        \big)
        \\
        &\leq
        L_{\bar{\bb}}
        \big(
            (1 + L_{\bc} + 2L_{\bc}L_{\bPhi}\tnorm{
            \z_2}_{\R^{n\times N}}^2)
            D_{t\x}
            +
            2L_{\bc}C_{\bPhi}^2 
            D_{\z}
        \big).
    \end{align*}
    Thus, if case (I) holds, then $\tnorm{\z_2}_{\R^{n \times N}} \leq R_{\z}$, and $\bb$ is Lipschitz with constant 
    \begin{align*}
        L_{\bb}
        = 
        L_{\bar{\bb}} 
        \max
        \big\{
            1 + L_{\bc} + 2L_{\bc}L_{\bPhi}R_{\z}^2,
            2L_{\bc}C_{\bPhi}^2
        \big\}.
    \end{align*}
    Alternatively, if case (II) holds, then $\bPhi$ is independent of $(t,\x)$ and $L_{\bPhi}=0$. The bound then becomes global with constant
    \begin{align*}
        L_{\bb}
        = 
        L_{\bar{\bb}} 
        \max
        \big\{
            1 + L_{\bc},
            2L_{\bc}C_{\bPhi}^2
        \big\}.
    \end{align*}
    The same arguments apply to $\f$, with $L_{\bar{\bb}}$ replaced by $L_{\bar{\f}}$.
\end{proof}
Next, we focus on the Lipschitz continuity of $\kappa^{i,m}$. The following lemma first shows Lipschitz continuity of $\boldsymbol{\alpha}^m$, from which the result for $\kappa^{i,m}$ follows.

\begin{lemma}
    If case (I) or (II) holds, then there exists $L_\alpha > 0$, independent of $m$, such that
    \begin{align*}
        \tnorm{
            \boldsymbol{\alpha}^m(t_1, \x_1) - \boldsymbol{\alpha}^m(t_2, \x_2) }^2_{\R^{d \times N}}
        \leq 
        L_\alpha D_{t\x},
    \end{align*}
    and, for all $i\in \I$, $m\geq 1$, the functions $\kappa^{i,m}$ satisfy~\eqref{eq:lipschtiz_bounds}.
\end{lemma}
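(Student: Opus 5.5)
The plan is to prove the estimate for $\bal^m=(\alpha^{1,m},\dots,\alpha^{N,m})$ by induction on $m$, tracking an explicit Lipschitz constant $L_m$. I then show that $(L_m)_m$ is bounded by a geometric-series argument, which works because $L_\kappa^a<1$. The claim for $\kappa^{i,m}$ then follows by writing it as $\kappa^i$ evaluated at $\bal^{-i,m-1}$.

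\emph{Base case and recursion.} For $m=0$ we have $\bal^0\equiv 0$, so $L_0=0$. For $m\ge1$, by \eqref{eq:FP_construction} and \eqref{eq:induction_proof_relation_first},
\[
\alpha^{i,m}(t,\x)=\kappa^i\big(t,\x,\bPhi(t,\x)\zeta^{i,m}(t,\x),\bal^{-i,m-1}(t,\x)\big).
\]
Stacking over $i$ gives $\bal^m(t,\x)=\boldsymbol\kappa\big(t,\x,\bPhi(t,\x)\bzeta^m(t,\x),\bal^{m-1}(t,\x)\big)$, since $\kappa^i$ depends on $\ba$ only through $\ba^{-i}$. The joint Lipschitz bound on $\boldsymbol\kappa$ then yields
\[
\tnorm{\bal^m(t_1,\x_1)-\bal^m(t_2,\x_2)}^2\le L_\kappa\Big(D_{t\x}+\tnorm{\bPhi(t_1,\x_1)\bzeta^m(t_1,\x_1)-\bPhi(t_2,\x_2)\bzeta^m(t_2,\x_2)}^2\Big)+L_\kappa^a L_{m-1}D_{t\x}.
\]
For the middle term I add and subtract $\bPhi(t_1,\x_1)\bzeta^m(t_2,\x_2)$, exactly as in Lemma~\ref{lem:lipschitz_fb}. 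This gives the bound
\[
2C_{\bPhi}^2 N L_\zeta^{\FP}D_{t\x}+2L_{\bPhi}\tnorm{\bzeta^m(t_2,\x_2)}^2D_{t\x},
\]
where \eqref{eq:lipschitz_zetaim}--\eqref{eq:LFP} give an $m$-uniform Lipschitz constant for each $\zeta^{i,m}$.

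\emph{Uniformity in $m$.} In case (II), $L_{\bPhi}=0$. In case (I), $\bzeta^m=\bSigma^\top\mathrm D_{\x}V^{\cdot,m}$ is bounded uniformly in $m$ and $(t,\x)$, by \eqref{eq:grad_bound} together with the boundedness of $\bSigma$ from \eqref{eq:UP}, say by $R_{\z}$. Either way we obtain the recursion $L_m\le c+L_\kappa^aL_{m-1}$ with $c$ independent of $m$. Iterating gives
\[
L_m\le c\sum_{k\ge0}(L_\kappa^a)^k=\frac{c}{1-L_\kappa^a}=:L_\alpha .
\]

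\emph{Lipschitz bound for $\kappa^{i,m}$.} By definition and \eqref{eq:induction_proof_relation_first}, $\kappa^{i,m}(t,\x,z)=\kappa^i\big(t,\x,\bPhi(t,\x)z,\bal^{-i,m-1}(t,\x)\big)$. Applying \eqref{eq:kappai_lipschitz}, the same add-and-subtract step for $\bPhi z$, and the bound on $\bal^{m-1}$ just proved gives
\[
\|\kappa^{i,m}(t_1,\x_1,z_1)-\kappa^{i,m}(t_2,\x_2,z_2)\|^2\le L_\kappa\big(D_{t\x}+2C_{\bPhi}^2D_z+2L_{\bPhi}\|z_2\|^2D_{t\x}\big)+L_\kappa^aL_\alpha D_{t\x}.
\]
This is the bound in \eqref{eq:lipschtiz_bounds} with
\[
L_\kappa^{\FP}=\max\big\{L_\kappa(1+2L_{\bPhi}R_{\z}^2)+L_\kappa^aL_\alpha,\;2L_\kappa C_{\bPhi}^2\big\}.
\]
In case (II), $L_{\bPhi}=0$ removes the dependence on $R_{\z}$, so the bound is global. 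In case (I), the $|z_2|$ term restricts the bound to bounded $z$, which is the local version.

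The main obstacle is keeping the constant uniform in $m$. This rests entirely on two ingredients: the strict contraction $L_\kappa^a<1$, and the $m$-uniform bounds $L_\zeta^{\FP}$ and $R_{\z}$, which are assumptions rather than derived facts. One detail also needs care: the joint $\boldsymbol\kappa$ bound has to be applied with $\ba=\bal^{m-1}$ evaluated at the two points. This is consistent because each component $\kappa^i$ only sees $\ba^{-i}$.
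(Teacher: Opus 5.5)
Your proof is correct and follows essentially the same route as the paper: a one-step recursion for the Lipschitz constant of $\bal^m$, started from the zero policy and closed by the contraction $L_\kappa^a<1$ and a geometric series, followed by writing $\kappa^{i,m}$ as $\kappa^i$ evaluated at $\bal^{-i,m-1}$. The only difference is that you apply the additive joint bound on $\boldsymbol\kappa$ in one step, whereas the paper first splits the difference with Young's inequality; your contraction factor is therefore $L_\kappa^a$ rather than $(1+\eta)L_\kappa^a$, which is a harmless simplification.
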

\begin{proof}
     We introduce the compact notation
\begin{align*}
    p^i_k = \bPhi(t_k, \x_k) \zeta^{i,m}(t_k, \x_k), \qquad \alpha^{i,m}_k = \alpha^{i,m}(t_k, \x_k), 
\end{align*}
for $k = 1,2$. We want to show that
\begin{align*}
    A_m \defeq \sum_{i \in \I} \| \alpha^{i,m}_1 - \alpha^{i,m}_2 \|^2_{\R^d} \leq L_\alpha D_{t\x},
\end{align*}
for all $m\geq 1$. Using the definition of $\alpha^{i,m}$, we have
\begin{align*}
    A_m
    &=
    \sum_{i \in \I}
    \big\| 
        \kappa^i
        \big(
            t_1, 
            \x_1, 
             p^i_1, 
            \boldsymbol{\alpha}^{-i,m-1}_1
        \big)
        -
        \kappa^i
        \big(
            t_2, 
            \x_2, 
             p^i_2, 
            \boldsymbol{\alpha}^{-i,m-1}_2
        \big)
    \big\|^2_{\R^d}
    \\
    &\leq 
    (1 + \eta)
    \sum_{i \in \I}
    \big\| 
        \kappa^i
        \big(
            t_1, 
            \x_1, 
             p^i_1, 
            \boldsymbol{\alpha}^{-i,m-1}_1
        \big)
        -
        \kappa^i
        \big(
            t_1, 
            \x_1, 
             p^i_1, 
            \boldsymbol{\alpha}^{-i,m-1}_2
        \big)
    \big\|^2_{\R^d}
    \\
    &\qquad
    +
    (1 + \eta^{-1})
    \sum_{i \in \I}
    \big\| 
        \kappa^i
        \big(
            t_1, 
            \x_1, 
             p^i_1, 
            \boldsymbol{\alpha}^{-i,m-1}_2
        \big)
        -
        \kappa^i
        \big(
            t_2, 
            \x_2, 
             p^i_2, 
            \boldsymbol{\alpha}^{-i,m-1}_2
        \big)
    \big\|^2_{\R^d},
\end{align*}
where the second step is Young's inequality. The first term is controlled by the joint contraction
\begin{align*}
    \sum_{i \in \I}
    \big\| 
        \kappa^i
        \big(
            t_1, 
            \x_1, 
             p^i_1, 
            \boldsymbol{\alpha}^{-i,m-1}_1
        \big)
        -
        \kappa^i
        \big(
            t_1, 
            \x_1, 
             p^i_1, 
            \boldsymbol{\alpha}^{-i,m-1}_2
        \big)
    \big\|^2_{\R^d}
    \leq
    L_\kappa^a 
    \sum_{i \in \I}
    \big\|
        \alpha^{i,m-1}_1
        -
        \alpha^{i,m-1}_2
    \big\|^2_{\R^d}
    =
    L_\kappa^a A_{m-1}.
\end{align*}
For the second term, the Lipschitz continuity of $\kappa^i$ gives
\begin{align*}
    \sum_{i \in \I}
    \big\| 
        \kappa^i
        \big(
            t_1, 
            \x_1, 
             p^i_1, 
            \boldsymbol{\alpha}^{-i,m-1}_2
        \big)
        -
        \kappa^i
        \big(
            t_2, 
            \x_2, 
             p^i_2, 
            \boldsymbol{\alpha}^{-i,m-1}_2
        \big)
    \big\|^2_{\R^d}
    \leq
    \sum_{i \in \I}
    L_{\kappa}\big(D_{t\x} + \big\| p^i_1 - p^i_2 \big\|_{\R^n}^2 \big).
\end{align*}
As in the proof of Lemma~\ref{lem:lipschitz_fb}, we get
\begin{align*}
    \big\| p^i_1 - p^i_2 \big\|_{\R^n}^2 
    &= 
    \big\| 
        \bPhi(t_1, \x_1) \zeta^{i,m}(t_1, \x_1) 
        -  
        \bPhi(t_2, \x_2) \zeta^{i,m}(t_2, \x_2) 
    \big\|_{\R^n}^2
    \\
    &\leq
    2C^2_{\bPhi}
    \big\|
         \zeta^{i,m}(t_1, \x_1)
         -
          \zeta^{i,m}(t_2, \x_2)
    \big\|^2_{\R^n}
    +
    2 L_{\bPhi} D_{t\x}
    \big\|  \zeta^{i,m}(t_2, \x_2) \big\|^2_{\R^n}.
\end{align*}
For the first term we use the uniform Lipschitz bound for $\zeta^{i,m}$. Under case (I), we have a uniform bound on $\zeta^{i,m}$ for the second term. Alternatively, under case (II), we have $L_{\bPhi} = 0$ and the second term vanishes completely. This gives the bound
\begin{align}
    \| p_1^i - p_2^i \|^2_{\R^n} \leq L_p D_{t\x}. \label{eq:phiz_bound}
\end{align}
Thus, the bound for $A_m$ becomes
\begin{align*}
    A_m \leq (1+\eta)L_\kappa^a A_{m-1} + (1+\eta^{-1})L_\kappa \sum_{i\in\I} \big( D_{t\x} + L_p D_{t\x} \big) = q A_{m-1} + BD_{t\x},
\end{align*}
where we have defined
\begin{align*}
    q \defeq  (1+\eta)L_\kappa^a, \qquad B \defeq (1+\eta^{-1})L_\kappa N(1+L_p),
\end{align*}
and choose $\eta$ such that $q < 1$. Iterating the bound, we get
\begin{align*}
    A_m &\leq q^m A_0 + \Bigg( \sum_{r=0}^{m-1} q^r \Bigg) B D_{t\x} \leq \frac{B}{1-q} D_{t\x},
\end{align*}
where in the last inequality we use $A_0=0$ by zero-policy initialization, and the geometric sum formula. We thus have Lipschitz continuity of $\boldsymbol{\alpha}^{m}$ for all $m$, with 
\begin{align*}
    L_\alpha \defeq  \frac{B}{1-q}.
\end{align*}
This concludes the $\boldsymbol{\alpha}^m$-part of the lemma. The Lipschitz continuity of $\kappa^{i,m}$ now follows by using the Lipschitz continuity of $\kappa^i$, the same type of estimate for the $\bPhi(t_k,\x_k)z_k$ terms as above, and the Lipschitz continuity of $\boldsymbol{\alpha}^m$,
\begin{align*}
    \big\|
        \kappa^{i,m}(t_1, \x_1, z_1)
        -
        \kappa^{i,m}(t_2, \x_2, z_2)
    \big\|^2_{\R^d} 
    &\leq
    \big\|
        \kappa^{i}(t_1, \x_1, \bPhi_1z_1, \boldsymbol{\alpha}^{-i,m-1}_1)
        -
        \kappa^{i}(t_2, \x_2, \bPhi_2z_2, \boldsymbol{\alpha}^{-i,m-1}_2)
    \big\|^2_{\R^d}
    \\
    &\leq
    L_{\kappa^i}
    \big(
        D_{t\x}
        +
        \big\|
            \bPhi_1 z_1
            -
            \bPhi_2 z_2
        \big\|_{\R^n}^2
    \big)
    \!+\!
    L_\kappa^{a}
    \btnorm{
        \boldsymbol{\alpha}^{-i,m-1}_1
        -
        \boldsymbol{\alpha}^{-i,m-1}_2}_{\R^{d\times (N-1)}}^2
    \\
    &\leq
    L_{\kappa}^{\FP}
    \big(
        D_{t\x}
        +
        D_{z}
    \big).
\end{align*}
\end{proof}
It remains to show the Lipschitz continuity of $\bb^{i,m}$ and $\ell^{i,m}$. This requires Lipschitz continuity of the coefficient functions $\tilde{\bb}^i$ and $\tilde{\ell}^i$, which follows under similar calculations and assumptions as for $\bb^i$ and $\ell^i$ in Lemma~\ref{lem:lipschitz_fb}. The following lemma concludes the appendix.
\begin{lemma}
    If case (I) or (II) holds, then the functions $\bb^{i,m}$ and $\ell^{i,m}$ satisfy~\eqref{eq:lipschtiz_bounds}.
\end{lemma}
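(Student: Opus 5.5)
We prove Lipschitz continuity of $\bb^{i,m}$ and $\ell^{i,m}$ by writing both as compositions and chaining the Lipschitz bounds already available. By definition,
\begin{align*}
\bb^{i,m}\big(t,\x,z,\z^{-i}\big)
&=\bar\bb\Big(t,\x,\big[\kappa^i\big(t,\x,\bPhi(t,\x)z,\boldsymbol\kappa^{-i,m-1}(t,\x,\z^{-i})\big),\,\boldsymbol\kappa^{-i,m-1}(t,\x,\z^{-i})\big]_i\Big),
\end{align*}
and $\ell^{i,m}$ has the same form with $\bar f^i$ in place of $\bar\bb$. The argument therefore has three layers: the Lipschitz property of $\bar\bb$ or $\bar f^i$ in~\eqref{eq:bhat}, that of $\kappa^i$ in~\eqref{eq:kappai_lipschitz}, and the uniform-in-$m$ bound on $\kappa^{j,m-1}$ from the preceding lemma.

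\textbf{Step 1 (opponents' actions).} For fixed $(t_k,\x_k,\z^{-i}_k)$, $k=1,2$, set $\ba^{-i}_k\defeq\boldsymbol\kappa^{-i,m-1}(t_k,\x_k,\z^{-i}_k)$. Summing the bound for $\kappa^{j,m-1}$ over $j\neq i$ gives
\begin{align*}
\tnorm{\ba^{-i}_1-\ba^{-i}_2}^2_{\R^{d\times(N-1)}}\le L_\kappa^{\FP}\big((N-1)D_{t\x}+D_{\z}\big).
\end{align*}
For $m=1$ we have $\boldsymbol\kappa^{-i,0}\equiv0$, so this difference vanishes.

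\textbf{Step 2 (own best response).} Applying~\eqref{eq:kappai_lipschitz} to the own-action component yields
\begin{align*}
L_\kappa\big(D_{t\x}+\|\bPhi(t_1,\x_1)z_1-\bPhi(t_2,\x_2)z_2\|^2_{\R^n}\big)+L_\kappa^a\tnorm{\ba^{-i}_1-\ba^{-i}_2}^2_{\R^{d\times(N-1)}}.
\end{align*}
The $\bPhi z$ term is handled exactly as in the proof of Lemma~\ref{lem:lipschitz_fb}, by adding and subtracting $\bPhi(t_1,\x_1)z_2$:
\begin{align*}
\|\bPhi(t_1,\x_1)z_1-\bPhi(t_2,\x_2)z_2\|^2_{\R^n}\le 2C_{\bPhi}^2D_z+2L_{\bPhi}D_{t\x}\|z_2\|^2_{\R^n}.
\end{align*}
This is the only place where cases (I) and (II) enter. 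Under (I), $\|z_2\|\le R_{\z}$, so the last term is bounded by $2L_{\bPhi}R_{\z}^2D_{t\x}$. Under (II), $L_{\bPhi}=0$ and the term disappears.

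\textbf{Step 3 (assemble).} Apply~\eqref{eq:bhat}, or the Lipschitz bound for $\bar f^i$, to the joint action $[\cdot,\cdot]_i$. Its squared Frobenius distance is the sum of the own-action difference from Step 2 and the opponents' difference from Step 1. Collecting terms gives
\begin{align*}
L_b^{\FP}=L_{\bar\bb}\,c,\qquad L_\ell^{\FP}=L_{\bar f^i}\,c,
\end{align*}
with $c$ built from $1$, $L_\kappa$, $L_\kappa^a$, $L_\kappa^{\FP}$, $N$, $C_{\bPhi}$, and $L_{\bPhi}R_{\z}^2$ (under (I)). Since $D_z\le D_{\z}$, the bound takes the form required by~\eqref{eq:lipschtiz_bounds}. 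Equivalently, one can first show that $\tilde\bb^i$ and $\tilde\ell^i$ are Lipschitz in $(t,\x,z,\ba^{-i})$ via Steps 2–3, and then compose with Step 1.

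\textbf{Main obstacle.} The key point is uniformity in $m$, and it rests entirely on the preceding lemma: $L_\kappa^{\FP}$ is independent of $m$ because $A_m$ is controlled by the contraction $q=(1+\eta)L_\kappa^a<1$. Without that lemma, the constant would compound through $\boldsymbol\kappa^{-i,m-1}$ at every iteration. A secondary subtlety arises under (I): the bound $R_{\z}$ applies only to the $\z$-arguments actually reached. These are $\bSigma^\top\mathrm D_{\x}V^{i,m}$, which are bounded by the boundedness of $\bSigma$ together with~\eqref{eq:grad_bound}. The resulting Lipschitz statement is therefore the local version, which is what (L) requires in that case.
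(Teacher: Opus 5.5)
Your proposal is correct and follows essentially the paper's route: Lipschitz continuity of $\tilde\bb^i$ and $\tilde\ell^i$ in $(t,\x,z,\ba^{-i})$, composed with the uniform-in-$m$ Lipschitz bound for $\kappa^{j,m-1}$ from the preceding lemma. The only difference is that you carry out the $\tilde\bb^i$ estimate explicitly, using \eqref{eq:kappai_lipschitz} and the $\bPhi z$ splitting, whereas the paper merely asserts that it follows as in Lemma~\ref{lem:lipschitz_fb}.
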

\begin{proof}
Using the definition of $\bb^{i,m}$ and the Lipschitz continuity of $\kappa^{i,m}$, we obtain
\begin{align*}
    &\big\| 
        \bb^{i,m}(t_1, \x_1, z^i_1, \z_1^{-i})
        -
        \bb^{i,m}(t_2, \x_2, z^i_2, \z_2^{-i})
    \big\|^2
    \\
    &\qquad
    =
    \big\| 
        \tilde{\bb}^{i}(t_1, \x_1, z^i_1, \boldsymbol{\kappa}^{-i,m-1}(t_1, \x_1, \boldsymbol{z}_1^{-i}))
        -
        \tilde{\bb}^{i}(t_2, \x_2, z^i_2,\boldsymbol{\kappa}^{-i,m-1}(t_2, \x_2, \boldsymbol{z}_2^{-i}))
    \big\|^2
    \\
    &\qquad
    \leq 
    L_{\tilde{\bb}^i}
    \bigg(
        D_{t\x}
        +
        \| z^i_1 - z_2^i \|^2_{\R^n}
        +
        L_\kappa^{\FP}
        \bigg(
            (N-1) D_{t\x} 
            + 
            \sum_{i\in\I\backslash\{i\}} \| z^j_1 - z^j_2 \|^2_{\R^n} 
        \bigg)
    \bigg)
    \\
    &\qquad
    \leq
    L^{\FP}_{\bb}
    \left(
        D_{t\x}
        +
        D_{\z}
    \right).
\end{align*}
The same argument gives Lipschitz continuity for $\ell^{i,m}$.
\end{proof}

\section{Synthesis of the LQ-problem}\label{App:LQ_riccati}

This section contains a detailed derivation of the analytical solution to the LQG problem presented in Example~\ref{ex:LQ}. To lighten the notation, the explicit dependence on $(t,\x)$ is suppressed. The value function $\valueFunctionPlayeri$ satisfies the HJB equation
\begin{align}
\begin{split}
        \partial_t \valueFunctionPlayeri 
    &+  
        \frac{1}{2} \Tr 
        \left( 
            \diffusionMatrix 
            \hessian 
            \valueFunctionPlayeri 
            \diffusionMatrix^\transpose 
        \right)
    +   
        \left\langle 
            \attractionMatrix 
            \left( 
                \boldsymbol{\xi} - \x 
            \right),
            \gradient
            \valueFunctionPlayeri
        \right\rangle
    +   
        \big\| 
            \stateCostPlayeri
            \x
        \big\|_{\mathbb R^n}^2
    \\
    &\quad
    +  
        \sumAllPlayersj 
        \left\langle
            \Phij,
            \left(
                \controlMatrixPlayerj
            \right)^\transpose
            \gradient
            \valueFunctionPlayeri
        \right\rangle
    +
        \frac{1}{2}
        \sumAllPlayersj
        \left\langle
            \controlCostPlayerij
            \Phij,
            \Phij
        \right\rangle
    +   
        \sumAllPlayersj
        \left\langle
            \controlstateCostPlayerij
            \Phij,
            \x
        \right\rangle
    = 0,
\end{split}\label{eq:HJB_LQG_Example_new}
\end{align}
where we have used the convenient notation
\begin{align}
    \Phii
    =
    -   
        \left(
            \controlCostPlayerii
        \right)^{-1}
        \left(
            \left(
                \controlMatrixPlayeri
            \right)^\transpose
            \gradient
            \valueFunctionPlayeri
        +   
            \left(
                \controlstateCostPlayerii
            \right)^\transpose
            \x
        \right).
    \label{eq:Phii_new}
\end{align}
We make the quadratic ansatz
\begin{align}
    \valueFunctionPlayeri(t,\x)
    =
        \frac{1}{2}
        \left\langle
            \x,
            P^i
            \x
        \right\rangle
    +
        \left\langle
            Q^i,
            \x
        \right\rangle
    +
        R^i.
    \label{eq:ex_ansatz_new}
\end{align}
Differentiating $\valueFunctionPlayeri$ in time and space, respectively, we get
\begin{align}
    \partial_t \valueFunctionPlayeri 
    &= 
        \frac{1}{2}
        \big\langle
            \x,
            \dot{P}^i
            \x
        \big\rangle
    +
        \big\langle
            \dot{Q}^i,
            \x
        \big\rangle
    +
        \dot{R}^i. 
    \\
    \gradient \valueFunctionPlayeri
    &=
         \PPshort^i \x 
    +
        Q^i, \label{eq:gradient_V}
    \\
    \hessian \valueFunctionPlayeri
    &=
        \PPshort^i,
\end{align}
where the notation $\PPshort^i = \tfrac{1}{2} \big( P^i + (P^i)^\transpose\big)$ is used for brevity. Next, we evaluate each term in~\eqref{eq:HJB_LQG_Example_new} by explicitly inserting~\eqref{eq:Phii_new} followed by the ansatz~\eqref{eq:ex_ansatz_new}. The first term immediately becomes
\begin{align*}
    \frac{1}{2} \Tr 
        \left( 
            \diffusionMatrix 
            \hessian 
            \valueFunctionPlayeri 
            \diffusionMatrix^\transpose 
        \right)
    =
        \frac{1}{2} \Tr 
        \big( 
            \diffusionMatrix 
            \PPshort^i
            \diffusionMatrix^\transpose 
        \big).
\end{align*}
Inserting~\eqref{eq:gradient_V} into the second term and rearranging terms, we get 
\begin{align*}
    \left\langle 
            \attractionMatrix 
            \left( 
                \boldsymbol{\xi} - \x 
            \right),
            \gradient
            \valueFunctionPlayeri
    \right\rangle
    &=
    -
        \left\langle 
                \x,
                \attractionMatrix^\transpose
                 \PPshort^i 
                 \x 
        \right\rangle
    +
        \left\langle 
                 \x,
                 \PPshort^i 
                \attractionMatrix 
                \boldsymbol{\xi}
            -
                \attractionMatrix^\transpose
                Q^i
        \right\rangle
    +
        \left\langle 
                \attractionMatrix 
                \boldsymbol{\xi},
                Q^i
        \right\rangle.
\end{align*}
For the first sum in~\eqref{eq:HJB_LQG_Example_new}, note that each term can be written as
\begin{align*}
    \left\langle
        \Phij,
        \left(
            \controlMatrixPlayerj
        \right)^\transpose\!
        \gradient
        \valueFunctionPlayeri
    \right\rangle
    &=
    -   
        \left\langle
            \gradient
            \valueFunctionPlayeri,
            \controlMatrixPlayerj
            \left(
                \controlCostPlayerjj
            \right)^{-1}
            \left(
                \left(
                    \controlMatrixPlayerj
                \right)^\transpose
                \gradient
                \valueFunctionPlayerj
            +   
                \left(
                    \controlstateCostPlayerjj
                \right)^\transpose
                \x
            \right)
        \right\rangle
    \\
    &=
        \left\langle
            \gradient
            \valueFunctionPlayeri,
            \ThetaOne
            \gradient
            \valueFunctionPlayerj
        +
            \ThetaTwo
            \x
        \right\rangle
    \\
    &=
        \left\langle\!
            \x,\!
            \big(
                \PPshort^i
                \ThetaOne
                \PPshort^j
            \!
            +
            \!
                \PPshort^i
                \ThetaTwo
            \big)
            \x\!
        \right\rangle
    \!
    +
    \!
        \left\langle\!
            \x,\!
            \PPshort^i
            \ThetaOne
            Q^j
        \!
        +
        \!
            \PPshort^j
            \big(
                \ThetaOne
            \big)^\transpose\!
            Q^i
        \!
        +
        \!
            \big(
                \ThetaTwo
            \big)^\transpose\!
            Q^i\!
        \right\rangle
    \!
     +
     \!
        \left\langle\!
            Q^i,
            \ThetaOne
            Q^j\!
        \right\rangle,
\end{align*}
where we introduced the notation
\begin{align*}
    \ThetaOne 
    &\defeq 
    -
        \controlMatrixPlayerj
        \left(
            \controlCostPlayerjj
        \right)^{-1}
        \left(
            \controlMatrixPlayerj
        \right)^\transpose,
    \quad
    \ThetaTwo
    \defeq
    -
        \controlMatrixPlayerj
        \left(
            \controlCostPlayerjj
        \right)^{-1}
        \left(
            \controlstateCostPlayerjj
        \right)^\transpose.
\end{align*}
Next, we define
\begin{align*}
    \rho^{i,j}
    \defeq
    \tfrac{1}{2}
    \big(
        \controlCostPlayerjj
    \big)^{-\transpose}
    \controlCostPlayerij
    \big(
        \controlCostPlayerjj
    \big)^{-1}
\end{align*}
For each term in the second sum of~\eqref{eq:HJB_LQG_Example_new}, we insert the expressions for $\Phij$ and $\gradient \valueFunctionPlayerj$, and rearrange terms to get
\begin{align*}
    \frac{1}{2}
    \left\langle
        \controlCostPlayerij
        \Phij,
        \Phij
    \right\rangle
    &=
        \left\langle
            \gradient
            \valueFunctionPlayerj,
            \controlMatrixPlayerj
            \rho^{i,j}
            \left(
                \controlMatrixPlayerj
            \right)^\transpose
            \gradient
            \valueFunctionPlayerj
        \right\rangle
    +
        \left\langle
            \x,
            \controlstateCostPlayerjj
            \rho^{i,j}
            \left(
                \controlMatrixPlayerj
            \right)^\transpose
            \gradient
            \valueFunctionPlayerj
        \right\rangle
    \\
    &\hspace{0.5cm}
    +
        \left\langle
            \gradient
            \valueFunctionPlayerj,
            \controlMatrixPlayerj
            \rho^{i,j}
            \left(
                \controlstateCostPlayerjj
            \right)^\transpose
            \x
        \right\rangle
    +
        \left\langle
            \x,
            \controlstateCostPlayerjj
            \rho^{i,j}
            \left(
                \controlstateCostPlayerjj
            \right)^\transpose
            \x
        \right\rangle
    \\
    &=
        \left\langle
            \gradient
            \valueFunctionPlayerj,
            \ThetaThree
            \gradient
            \valueFunctionPlayerj
        \right\rangle
    +
        \left\langle
            \x,
            \ThetaFour
            \gradient
            \valueFunctionPlayerj
        \right\rangle
    +
        \left\langle
            \gradient
            \valueFunctionPlayerj,
            \ThetaFive
            \x
        \right\rangle
    +
        \left\langle
            \x,
            \ThetaSix
            \x
        \right\rangle
    \\
    &=
        \left\langle
            \x,
            \big(
                \PPshort^j
                \ThetaThree
                \PPshort^j
            +
                \ThetaFour
                \PPshort^j
            +
                \PPshort^j
                \ThetaFive
            +
                \ThetaSix
            \big)
            \x
        \right\rangle
    \\
    &\hspace{0.5cm}
    +
        \left\langle
            \x,
            \PPshort^j
            \big(
                \ThetaThree
            \big)^\transpose
            Q^j
        +
            \PPshort^j
            \ThetaThree
            Q^j
        +
            \ThetaFour
            Q^j
        +
            \big(
                \ThetaFive
            \big)^\transpose
            Q^j
        \right\rangle
    +
        \left\langle
            Q^j,
            \ThetaThree
            Q^j
        \right\rangle.
\end{align*}
The matrices introduced in the second equality are defined as
\begin{align*}
    \ThetaThree
    \defeq
        \controlMatrixPlayerj
        \rho^{i,j}
        \left(
            \controlMatrixPlayerj
        \right)^\transpose,
    \quad
    \ThetaFour
    \defeq
        \controlstateCostPlayerjj
        \rho^{i,j}
        \left(
            \controlMatrixPlayerj
        \right)^\transpose,
    \quad
    \ThetaFive
    \defeq
        \controlMatrixPlayerj
        \rho^{i,j}
        \left(
            \controlstateCostPlayerjj
        \right)^\transpose,
    \quad
    \ThetaSix
    \defeq
        \controlstateCostPlayerjj
        \rho^{i,j}
        \left(
            \controlstateCostPlayerjj
        \right)^\transpose.
\end{align*}
Each term in the final sum of~\eqref{eq:HJB_LQG_Example_new} can be written as
\begin{align*}
    \left\langle
        \controlstateCostPlayerij
        \Phij,
        \x
    \right\rangle
    &=
    -
        \left\langle
            \x,
            \controlstateCostPlayerij
            \left(
                \controlCostPlayerjj
            \right)^{-1}
            \left(
                \left(
                    \controlMatrixPlayerj
                \right)^\transpose
                \gradient
                \valueFunctionPlayerj
            +   
                \left(
                    \controlstateCostPlayerjj
                \right)^\transpose
                \x
            \right)
        \right\rangle
    =
        \left\langle
            \x,
            \ThetaSeven
            \gradient
            \valueFunctionPlayerj
        +
            \ThetaEight
            \x
        \right\rangle
    \\
    &=
        \left\langle
            \x,
            \big(
                \ThetaSeven
                \PPshort^j
            +
                \ThetaEight
            \big)
            \x
        \right\rangle
    +
        \left\langle
            \x,
            \ThetaSeven
            Q^j
        \right\rangle,
\end{align*}
where we have introduced the matrices
\begin{align*}
    \ThetaSeven
    &\defeq
    -
        \controlstateCostPlayerij
        \left(
            \controlCostPlayerjj
        \right)^{-1}
        \left(
            \controlMatrixPlayerj
        \right)^\transpose,
    \quad
    \ThetaEight
    \defeq
    -
        \controlstateCostPlayerij
        \left(
            \controlCostPlayerjj
        \right)^{-1}
        \left(
            \controlstateCostPlayerjj
        \right)^\transpose.
\end{align*}
Now, we combine the earlier defined matrices according to
\begin{align*}
    \ThetaThreeTilde \defeq \ThetaThree + \big( \ThetaThree \big)^\top, \quad \ThetaFourFiveSeven \defeq \ThetaFour + \big( \ThetaFive \big)^\top + \ThetaSeven, \quad \ThetaSixEight \defeq \ThetaSix + \ThetaEight.
\end{align*}
Inserting the derived expressions into~\eqref{eq:HJB_LQG_Example_new}, we then have
\begin{align*}
    0 &= 
        \Big\langle
            \x,
            \Big(
                \tfrac{1}{2}
                \dot{P}^i
            -
                \attractionMatrix^\transpose
                \PPshort^i
            +
                (\stateCostPlayeri)^\transpose\stateCostPlayeri
            +
                \sumAllPlayersj
                \big(
                    \PPshort^i
                    (
                        \ThetaOne
                        \PPshort^j
                    +
                        \ThetaTwo
                    )
                +
                    \PPshort^j
                    \ThetaThree
                    \PPshort^j
                +
                    \ThetaFourFiveSeven
                    \PPshort^j
                +   
                    \ThetaSixEight
                \big)
            \Big)
            \x
        \Big\rangle
    \\
    &
    \hspace{1cm}
    +
        \Big\langle
            \x,
            \dot{Q}^i
        +
            \PPshort^i
            \attractionMatrix
            \boldsymbol{\xi}
        -
            \attractionMatrix^\transpose
            Q^i
        +
        \sumAllPlayersj
        \Big(
            \PPshort^i
            \ThetaOne
            Q^j
        +
            \PPshort^j
            \big(
                \ThetaOne
            \big)^\transpose
            Q^i
        +
            \big(
                \ThetaTwo
            \big)^\transpose
            Q^i
        +  
            \PPshort^j
            \ThetaThreeTilde
            Q^j
        +
            \ThetaFourFiveSeven
            Q^j
        \Big)
        \Big\rangle
    \\
    &
    \hspace{1cm}
    +
        \dot{R}^i
    +
        \frac{1}{2} \Tr 
        \left( 
            \diffusionMatrix 
            \PPshort^i
            \diffusionMatrix^\transpose 
        \right)
    +
        \left\langle 
                \attractionMatrix 
                \boldsymbol{\xi},
                Q^i
        \right\rangle
    +
        \sumAllPlayersj
        \big(
            \big\langle
                Q^i,
                \ThetaOne
                Q^j
            \big\rangle
        +
            \big\langle
                Q^j,
                \ThetaThree
                Q^j
            \big\rangle
        \big).
\end{align*}
Setting each inner product to zero, and using the fact that
\begin{align*}
    \big\langle
        \x,
        2
        \attractionMatrix^\transpose
        \PPshort^i
        \x
    \big\rangle
    &=
    \big\langle
        \x,
        \big(
            \attractionMatrix^\transpose
            P^i
        +   
            P^i
            \attractionMatrix
        \big)
        \x
    \big\rangle,
\end{align*}
yields the Riccati system~\eqref{eq:Riccati_system}, that is
\begin{align}
\begin{split}\label{eq:Riccati_system_derived}
        \dot{P}^i
    &=
        \attractionMatrix^\transpose
        P^i
    +   
        P^i
        \attractionMatrix
    -
        (\stateCostPlayeri)^\transpose\stateCostPlayeri
    -
        2
        \sumAllPlayersj
        \Big(
            \PPshort^i
            \big(
                \ThetaOne
                \PPshort^j
            +
                \ThetaTwo       
            \big)
        +
            \PPshort^j
            \ThetaThree
            \PPshort^j
        +
            \ThetaFourFiveSeven
            \PPshort^j
        +   
            \ThetaSixEight
        \Big)
    \\
    \dot{Q}^i
    &=
    -
        \PPshort^i
        \attractionMatrix
        \boldsymbol{\xi}
    +
        \attractionMatrix^\transpose
        Q^i
    -
        \sumAllPlayersj
        \Big(
            \PPshort^i
            \ThetaOne
            Q^j
        +
            \PPshort^j
            \big(
                \ThetaOne
            \big)^\transpose
            Q^i
        +
            \big(
                \ThetaTwo
            \big)^\transpose
            Q^i
        +  
            \PPshort^j
            \ThetaThreeTilde
            Q^j
        +
            \ThetaFourFiveSeven
            Q^j
        \Big)
    \\
    \dot{R}^i
    &=
    -
        \frac{1}{2} \Tr 
        \left( 
            \diffusionMatrix 
            \PPshort^i
            \diffusionMatrix^\transpose 
        \right)
    -
        \left\langle 
                \attractionMatrix 
                \boldsymbol{\xi},
                Q^i
        \right\rangle
    -
        \sumAllPlayersj
        \Big(
            \big\langle
                Q^i,
                \ThetaOne
                Q^j
            \big\rangle
        +
            \big\langle
                Q^j,
                \ThetaThree
                Q^j
            \big\rangle
        \Big)
\end{split}
\end{align}
where the $\Theta$-matrices are
\begin{align*}
    \ThetaOne 
    &= 
    -
        \controlMatrixPlayerj
        \left(
            \controlCostPlayerjj
        \right)^{-1}
        \left(
            \controlMatrixPlayerj
        \right)^\transpose
    , \\
    \ThetaTwo
    &=
    -
        \controlMatrixPlayerj
        \left(
            \controlCostPlayerjj
        \right)^{-1}
        \left(
            \controlstateCostPlayerjj
        \right)^\transpose,
    \\
    \ThetaThree
    &= 
        \tfrac{1}{2}
        \controlMatrixPlayerj   
        \left(
            \controlCostPlayerjj
        \right)^{-\transpose}
        \controlCostPlayerij
        \left(
            \controlCostPlayerjj
        \right)^{-1}
        \left(
            \controlMatrixPlayerj
        \right)^\transpose,
    \\
    \ThetaThreeTilde
    &=
        \tfrac{1}{2}
        \controlMatrixPlayerj   
        \left(
            \controlCostPlayerjj
        \right)^{-\transpose}
        \Big(
            \left(
                \controlCostPlayerij
            \right)^\transpose
        +
            \controlCostPlayerij
        \Big)
        \left(
            \controlCostPlayerjj
        \right)^{-1}
        \left(
            \controlMatrixPlayerj
        \right)^\transpose,
    \\
    \ThetaFourFiveSeven
    &=
        \Big(
            \tfrac{1}{2}
            \controlstateCostPlayerjj
            \big(
                \controlCostPlayerjj
            \big)^{-\transpose}
            \Big(
                \big(
                    \controlCostPlayerij
                \big)^\transpose
            +
                \controlCostPlayerij
            \Big)
        -   
            \controlstateCostPlayerij
        \Big)
        \left(
            \controlCostPlayerjj
        \right)^{-1}
        \left(
            \controlMatrixPlayerj
        \right)^\transpose,
    \\
    \ThetaSixEight
    &=
        \Big(
            \tfrac{1}{2}
            \controlstateCostPlayerjj
            \left(
                \controlCostPlayerjj
            \right)^{-\transpose}
            \controlCostPlayerij
        -
            \controlstateCostPlayerij
        \Big)
        \left(
            \controlCostPlayerjj
        \right)^{-1}
        \left(
            \controlstateCostPlayerjj
        \right)^\transpose.
\end{align*}

\begin{remark}
We consider the single-player case, i.e., $\I = \{1\}$, and show that the derived system corresponds to the generalized Riccati differential equation (see, e.g.,~\cite[Remark~2.1]{FerN13}). Since there is only one player, we can remove the superscript indices of all variables. Assuming $P$ and $K_a$ symmetric, we have
\begin{align} 
    \dot{P}
    &=
        \attractionMatrix^\transpose
        P
    +   
        P
        \attractionMatrix
    -
         K_{\x}^\transpose K_{\x}
    -
        2
        \Big(
            P
            \big(
                \Theta_1
            +
                \Theta_3
            \big)
            P
        +
            P
            \Theta_2
        +
            \Theta_{4,5,7}
            P
        +   
            \Theta_{6,8}
        \Big). \label{eq:P_equation_single_agent}
\end{align}
In this setting, the matrices can be simplified according to
\begin{align*}
    \Theta_1
+
    \Theta_3
    &=
    -
        B
        K_{a}^{-1}
        B^\transpose
    +
        \tfrac{1}{2}
        B   
        K_{a}^{-\transpose}
        K_{a}
        K_{a}^{-1}
        B^\transpose=
    -
        \tfrac{1}{2}
        B
        K_{a}^{-1}
        B^\transpose, \\
    \Theta_2 
    &=
    -
        B
        K_a^{-1}
        K_{a\x}^\top, \\
    \Theta_{4,5,7}
    &=
        \Big(
            \tfrac{1}{2}
            K_{a\x}
            K_{a}^{-\transpose}
            \Big(
                K_{a}^\transpose
            +
                K_{a}
            \Big)
        -   
            K_{a\x}
        \Big)
        K_{a}^{-1}
        B^\transpose
    =
    0, \\
    \Theta_{6,8}
    &=
        \Big(
            \frac{1}{2}
            K_{a\x}
            K_{a}^{-\transpose}
            K_{a}
        -
            K_{a\x}
        \Big)
        K_{a}^{-1}
        K_{a\x}^\transpose
    =
        -
        \frac{1}{2}
        K_{a\x}
        K_{a}^{-1}
        K_{a\x}^\transpose
\end{align*}
Inserting these expressions into~\eqref{eq:P_equation_single_agent}, we obtain the single agent Riccati equation
\begin{align}
    \dot{P}
    &=
        \attractionMatrix^\transpose
        P
    +   
        P
        \attractionMatrix
    -
        K_{\x}^\transpose
        K_{\x}
    +
        P B K_{a}^{-1} B^\transpose P
    +
        2 P B K_{a}^{-1} K_{a\x}^\transpose
    +
        K_{a\x} K_{a}^{-1} K_{a\x}^\transpose.
\end{align}
Compare this to the generalized Riccati differential equation (see, e.g.,~\cite[Remark~2.1]{FerN13}), which can be written on the form
\begin{align}
    \dot{P}
    =
    -
    P A
    -
    A^\transpose P
    +
    P B R^{-1} B^\transpose P
    +
    S R^{-1} S^\transpose
    +
    2 S R^{-1} B^\transpose P
    -
    Q.
\end{align}
Note that, with $Q = K_{\x}^\transpose K_{\x}$, $S = K_{a\x}$ and $R=K_{a}$, the generalized Riccati equation coincides with our single-player Riccati equation up to the sign convention for the drift term.
\end{remark}

\bibliographystyle{plain}
\bibliography{references}

\newpage
\appendix

\end{document}

%% file: numplot_v2.tex

\begin{tikzpicture}

\definecolor{darkgray176}{RGB}{176,176,176}
\definecolor{darkorange25512714}{RGB}{255,127,14}
\definecolor{forestgreen4416044}{RGB}{44,160,44}
\definecolor{lightgray204}{RGB}{204,204,204}
\definecolor{steelblue31119180}{RGB}{31,119,180}

\begin{groupplot}[
group style={group size=2 by 1, horizontal sep=1.8cm},
width=0.50\textwidth,
height=0.33\textwidth,
log basis y={10},
tick align=outside,
tick pos=left,
tick label style={font=\footnotesize},
x grid style={darkgray176},
xlabel={Iterations (\(\displaystyle m\))},
xmajorgrids,
xmin=-0.75, xmax=15.75,
xminorgrids,
xtick style={color=black},
y grid style={darkgray176},
ymajorgrids,
ymode=log,
ytick style={color=black}
]

\nextgroupplot[
title={$N=2$},
ymin=1e-21, ymax=1e2,
ytick={1e0,1e-4,1e-8,1e-12,1e-16,1e-20},
yminorgrids=false,
legend cell align={left},
legend style={
    font=\footnotesize,
    fill=white,
    draw=black,
    inner xsep=4pt,
    inner ysep=4pt,
    nodes={inner ysep=1.8pt},
    at={(0.98,0.98)},
    anchor=north east
}
]
\addplot [semithick, steelblue31119180, mark=*, mark size=2.2, mark options={solid}]
table {%
0 5.26518998e-02
1 2.91803156e-03
2 1.35161139e-04
3 6.36637624e-06
4 2.87064030e-07
5 1.21582812e-08
6 4.73964004e-10
7 1.67816136e-11
8 5.35374750e-13
9 1.53312608e-14
10 3.95362776e-16
11 9.48388226e-18
12 2.05702268e-19
13 1.55745364e-19
14 1.56891021e-19
15 1.56733102e-19
};
\addlegendentry{$\widehat{\mathrm{Err}}_{X}$}

\addplot [semithick, darkorange25512714, mark=square*, mark size=2.2, mark options={solid}]
table {%
0 1.96616004e+00
1 7.19235153e-02
2 3.17518175e-03
3 1.32575672e-04
4 5.46377468e-06
5 2.17913544e-07
6 8.32900483e-09
7 3.01116867e-10
8 1.01577315e-11
9 3.18748156e-13
10 8.83600965e-15
11 2.81613432e-16
12 2.97812829e-18
13 3.05740415e-18
14 3.04723935e-18
15 3.04847339e-18
};
\addlegendentry{$\widehat{\mathrm{Err}}_{Y}$}

\addplot [semithick, forestgreen4416044, mark=triangle*, mark size=2.2, mark options={solid}]
table {%
0 1.32360995e-01
1 6.41875065e-03
2 2.87054502e-04
3 1.29873939e-05
4 5.68571069e-07
5 2.37801194e-08
6 9.32308812e-10
7 3.38312427e-11
8 1.12733649e-12
9 3.43588314e-14
10 9.55438167e-16
11 2.47009805e-17
12 9.74256058e-19
13 4.34122684e-19
14 4.21595327e-19
15 4.21488890e-19
};
\addlegendentry{$\widehat{\mathrm{Err}}_{Z}$}

\nextgroupplot[
title={$N=20$},
ymin=1e-21, ymax=1e2,
ytick={1e0,1e-4,1e-8,1e-12,1e-16,1e-20},
yminorgrids=false,
legend cell align={left},
legend style={
    font=\footnotesize,
    fill=white,
    draw=black,
    inner xsep=4pt,
    inner ysep=4pt,
    nodes={inner ysep=1.8pt},
    at={(0.98,0.98)},
    anchor=north east
}
]
\addplot [semithick, steelblue31119180, mark=*, mark size=2.2, mark options={solid}]
table {%
0 5.69941981e-03
1 4.98405524e-06
2 4.23443470e-09
3 3.59773474e-12
4 3.04882144e-15
5 2.56829633e-18
6 9.88178572e-19
7 9.87746654e-19
8 9.87765633e-19
9 9.87799618e-19
10 9.87806680e-19
11 9.87807563e-19
12 9.87830956e-19
13 9.87823452e-19
14 9.87792998e-19
15 9.87789025e-19
};
\addlegendentry{$\widehat{\mathrm{Err}}_{X}$}

\addplot [semithick, darkorange25512714, mark=square*, mark size=2.2, mark options={solid}]
table {%
0 1.86579279e+00
1 1.53548787e-03
2 1.26345971e-06
3 1.03882820e-09
4 8.47524442e-13
5 8.55239484e-16
6 2.54191695e-17
7 2.54363850e-17
8 2.54364105e-17
9 2.54373186e-17
10 2.54371567e-17
11 2.54374328e-17
12 2.54380101e-17
13 2.54377881e-17
14 2.54375210e-17
15 2.54370028e-17
};
\addlegendentry{$\widehat{\mathrm{Err}}_{Y}$}

\addplot [semithick, forestgreen4416044, mark=triangle*, mark size=2.2, mark options={solid}]
table {%
0 6.86075718e-02
1 5.71117306e-05
2 4.67930274e-08
3 3.82928461e-11
4 3.12338336e-14
5 3.51553933e-17
6 9.06380349e-18
7 9.05345593e-18
8 9.05331226e-18
9 9.05358229e-18
10 9.05358141e-18
11 9.05364694e-18
12 9.05384416e-18
13 9.05378129e-18
14 9.05354495e-18
15 9.05350262e-18
};
\addlegendentry{$\widehat{\mathrm{Err}}_{Z}$}

\end{groupplot}

\end{tikzpicture}